\title{The Adams-Novikov spectral sequence and Voevodsky's slice tower}
\author{Marc Levine}
\address{Universit\"at Duisburg-Essen\\
Fakult\"at Mathematik, Campus Essen\\
45117 Essen\\
Germany}
\email{marc.levine@uni-due.de}
\urladdr{http://www.esaga.uni-due.de/marc.levine/}
\thanks{Research supported by the Alexander von Humboldt Foundation}
\keywords{Morel-Voevodsky stable homotopy category, slice filtration, Adams-Novikov spectral sequence}
\subjclass[2000]{Primary  14F42, 55T15; Secondary  55P42}
\newtheorem{thm}{Theorem}[section]
\newtheorem{prop}[thm]{Proposition}
\newtheorem{lem}[thm]{Lemma}
\renewcommand{\theclaim}{\kern-3pt}
\newtheorem{IntroThm}{Theorem}
\theoremstyle{definition}
\newtheorem{Def}[thm]{Definition}
\theoremstyle{remark}
\newtheorem{rem}[thm]{Remark}
\newtheorem{rems}[thm]{Remarks}
\newtheorem{ex}[thm]{Example}
\newtheorem{exs}[thm]{Examples}
\newtheorem{IntroRem}{Remark}
\numberwithin{equation}{section}
\newcommand{\sA}{{\mathcal A}}
\newcommand{\sB}{{\mathcal B}}
\newcommand{\sC}{{\mathcal C}}
\newcommand{\sE}{{\mathcal E}}
\newcommand{\sH}{{\mathcal H}}
\newcommand{\sM}{{\mathcal M}}
\newcommand{\sN}{{\mathcal N}}
\newcommand{\sS}{{\mathcal S}}
\newcommand{\sT}{{\mathcal T}}
\newcommand{\sX}{{\mathcal X}}
\newcommand{\sY}{{\mathcal Y}}
\newcommand{\sZ}{{\mathcal Z}}
\newcommand{\A}{{\mathbb A}}
\newcommand{\CC}{{\mathbb C}}
\newcommand{\G}{{\mathbb G}}
\newcommand{\N}{{\mathbb N}}
\renewcommand{\P}{{\mathbb P}}
\newcommand{\mS}{{\mathbb S}}
\newcommand{\ZZ}{{\mathbb Z}}
\renewcommand{\phi}{\varphi}
\renewcommand{\1}{{\mathbf{1}}}
\newcommand{\an}{{\rm an}}
\newcommand{\Hom}{{\rm Hom}}
\newcommand{\Ext}{{\rm Ext}}
\newcommand{\im}{{\rm im}}
\newcommand{\Spec}{\operatorname{Spec}}
\newcommand{\0}{\emptyset}
\newcommand{\sHom}{{\mathcal{H}{om}}}
\newcommand{\id}{{\operatorname{id}}}
\newcommand{\sk}{{\operatorname{\rm sk}}}
\newcommand{\holim}{\mathop{{\rm holim}}}
\newcommand{\op}{{\text{\rm op}}}
\newcommand{\del}{\partial}
\renewcommand{\max}{{\operatorname{\rm max}}}
\newcommand{\Spt}{{\mathbf{Spt}}}
\newcommand{\Spc}{{\mathbf{Spc}}}
\newcommand{\Sm}{{\mathbf{Sm}}}
\renewcommand{\lim}{\operatornamewithlimits{\varprojlim}}
\newcommand{\colim}{\operatornamewithlimits{\varinjlim}}
\newcommand{\Ho}{{\mathbf{Ho}}}
\newcommand{\sq}{\square}
\newcommand{\Tot}{{\operatorname{\rm Tot}}}
\newcommand{\SH}{{\operatorname{\sS\sH}}}
\newcommand{\eff}{{\mathop{eff}}}
\newcommand{\DM}{{DM}}
\newcommand{\sSets}{{\mathbf{sSets}}}
\newcommand{\ds}{{/\kern-3pt/}}
\newcommand{\hofib}{{\mathop{\rm{hofib}}}}
\newcommand{\hocofib}{{\mathop{\rm{hocofib}}}}
\newcommand{\EM}{{{EM}_{\A^1}}}
\newcommand{\mot}{{\mathop{mot}}}
\newcommand{\et}{{\operatorname{\acute{e}t}}}
\renewcommand{\:}{\kern-1.5pt:\kern-1.5pt}
\newcommand{\gr}{{\text{Gr}}}
\renewcommand{\Re}{{\mathop{Re}}}
\newcommand{\MGL}{\text{MGL}}
\newcommand{\MU}{{\text{MU}}}
\newcommand{\BP}{{\text{BP}}}
\newcommand{\Dec}{\text{Dec}}
\newcommand{\pt}{pt}
\newcommand{\An}{\text{An}}
\newcommand{\Map}{\text{Map}}
\begin{document}

\renewcommand{\abstractname}{Abstract}
\begin{abstract}  We show that the spectral sequence induced by the Betti realization of the slice tower for the motivic sphere spectrum  agrees with the Adams-Novikov spectral sequence, after a suitable reindexing. The proof relies on a partial extension of Deligne's d\'ecalage construction to the Tot-tower of a cosimplicial spectrum.
 \end{abstract}
\date{\today}
\maketitle
\tableofcontents

\section{Introduction} Voevodsky has defined a natural tower in the motivic stable homotopy category $\SH(k)$ over a field $k$, called the {\em slice tower} (see \cite{VoevICM, VoevSlice}). Relying on the computation of the slices of $\MGL$ by Hopkins-Morel \cite{HopkinsMorel},  complete proofs of which have been recently made available through the work of Hoyois \cite{Hoyois}, we have filled in the details of a proof of the conjecture of Voevodsky \cite{VoevSlice}, identifying the slices of the motivic sphere spectrum with a motive built out of the $E_1$-complex in the classical Adams-Novikov spectral sequence for the stable homotopy groups of spheres (see \cite{Adams}), explicitly:
\[
s_q(\mS_k)\cong \EM(N\pi_{2q}(\MU^{\wedge *+1})\otimes\ZZ(q)[2q]),
\]
where $n\mapsto \MU^{\wedge n+1}$ is the usual cosimplicial spectrum associated to the $E_\infty$ ring spectrum $\MU$, $N\pi_{2q}(\MU^{\wedge *+1})$ is the associated normalized complex of homotopy groups, $\ZZ(q)[2q]$ is the shifted Tate motive and $\EM$ is the Eilenberg-MacLane functor from Voevodsky's category of motives to the motivic stable homotopy category. 

 In addition,  the Betti realization of the slice tower yields a tower over the classical sphere spectrum $\mS$, and we have shown that the resulting spectral sequence is bounded and converges to the homotopy groups of $\mS$. Furthermore, we have also shown that the resulting comparison map from the homotopy sheaves $\pi_{n,0}$ of the slice tower, evaluated on any algebraically closed subfield of $\CC$,
 to the homotopy groups of the Betti realization is an isomorphism. For all these results, we refer the reader to \cite{LevineComp}.

 Putting all this together, we have a bounded spectral sequence,  converging to $\pi_*\mS$, of ``motivic origin" and whose $E_2$-term agrees with the $E_2$-term in the Adams-Novikov spectral sequence, after a reindexing. The question thus arises: are these two spectral sequences the same, again after reindexing? The main result of this paper is an affirmative answer to this question, more precisely:
 
 \begin{IntroThm} \label{thm:Main} Let $\MU\in \SH$ be the complex cobordism spectrum and $\mS\in \SH$ the classical sphere spectrum. Let $k$ be an algebraically closed field of characteristic zero, let $\mS_k\in \SH(k)$ denote the motivic sphere spectrum, and let $s^t_p\mS_k$ denote the $p$th layer in the slice tower for $\mS_k$. Consider the Adams-Novikov spectral sequence
\[
E_2^{s,t}(AN)=\Ext^{s, t}_{\MU_*(\MU)}(\MU_*, \MU_*)
\Longrightarrow \pi_{t-s}(\mS)
\]
and the Atiyah-Hirzebruch spectral sequence for $\pi_{*,0}\mS_k(k)$ associated to the slice tower for $\mS_k$, 
\[
E_1^{p,q}(AH)=\pi_{-p-q,0}(s^t_p\mS_k)(\Spec k)\Longrightarrow\pi_{-p-q,0}\mS_k(\Spec k). 
\]
Then there is an isomorphism  
\[
\gamma_1^{p,q}:E_1^{p,q}(AH)\cong E_2^{3p+q,2p}(AN)
\]
which induces a sequence of isomorphisms of complexes (for all $r\ge1$)
\[
\oplus_{p,q}\gamma_{r}^{p,q}: (\oplus_{p,q}E_r^{p,q}(AH), d_r)\to (\oplus_{p,q}E_{2r+1}^{3p+q,2p}(AN), d_{2r+1}).\] 
\end{IntroThm}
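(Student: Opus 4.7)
The plan is to realize the Betti realization of the slice tower for $\mS_k$ as the d\'ecal\'e (in an extended sense of Deligne's construction) of the Tot-tower of the cosimplicial spectrum $n\mapsto\MU^{\wedge n+1}$, whose associated spectral sequence is the classical Adams-Novikov SS, and then identify $E_r$-pages via the page-shift furnished by d\'ecalage. The $E_1$-level identification is immediate from the known slice formula: using $s_p(\mS_k)\cong\EM(N\pi_{2p}(\MU^{\wedge *+1})\otimes\ZZ(p)[2p])$ together with the fact that Betti realization sends $\EM(M\otimes\ZZ(p)[2p])$ to the ordinary Eilenberg-MacLane spectrum on the complex $M$ shifted up by $2p$, one computes
\[
E_1^{p,q}(AH)=\pi_{-p-q}\bigl(H(N\pi_{2p}(\MU^{\wedge *+1}))[2p]\bigr)=H^{3p+q}(N\pi_{2p}(\MU^{\wedge *+1}))=E_2^{3p+q,2p}(AN).
\]
A bidegree check shows that $d_r^{AH}$, of bidegree $(r,1-r)$ in $(p,q)$, corresponds under $(p,q)\mapsto(3p+q,2p)$ to bidegree $(2r+1,2r)$ in $(s,t)$, exactly the bidegree of $d_{2r+1}^{AN}$.

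The heart of the argument is extending Deligne's d\'ecalage, classically defined on filtered chain complexes where it satisfies $E_r(\mathrm{Dec}\,F)\cong E_{r+1}(F)$, to the Tot-tower of a cosimplicial spectrum. I would construct this in the ``partial'' form suggested by the abstract: not a full homotopy-coherent d\'ecalage functor, but enough of its structure to identify $E_r$-pages at all finite $r\geq 1$. The slice tower is indexed by weight (by powers of $\P^1$), and Betti realization sends each $\P^1$-suspension to $S^2$; this already doubles the effective topological filtration index, and combined with the single-page shift from d\'ecalage yields the correspondence $r\leftrightarrow 2r+1$.

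The remaining task is to lift the $E_1$-level identification above to a tower-level identification of the Betti-realized slice tower with the rescaled d\'ecal\'e of the MU-Adams tower for $\mS$; the isomorphisms $\gamma_r^{p,q}$ then follow by transporting shifts through the d\'ecalage correspondence. The main obstacle is precisely this spectrum-level construction together with the tower comparison: in the classical chain-level setting both are formal, but in the stable homotopy category one must verify that the Betti-realized slice tower really coincides with the d\'ecal\'e (after the weight rescaling) of the MU-Adams tower, not merely at $E_1$ but as filtered spectra at the level of successive subquotients, so that agreement of differentials at every page follows.
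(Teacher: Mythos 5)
Your proposal follows essentially the same route as the paper: there the Betti realization of the slice tower of $\mS_k$ is identified with the termwise-Postnikov (d\'ecal\'e) tower of the cosimplicial spectrum $\MU^{\wedge *+1}$ --- via the motivic resolution $\MGL^{\wedge *+1}$ and truncated $\Tot$-constructions to control the comparison --- and the d\'ecalage page-shift $E_r(\Dec,\sX)\cong E_{r+1}(\Tot,\sX)$ that you invoke is exactly the paper's main technical proposition, yielding the same reindexing $r\mapsto 2r+1$ after the weight-doubling from $\P^1\mapsto S^2$.
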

We remind the reader that, for an object $\sE\in \SH(k)$ and integers $p, q$,  one has the {\em homotopy sheaf} $\pi_{p,q}(\sE)$, this being the Nisnevich sheaf on smooth schemes over $k$  associated to the presheaf
 \[
 U\in \Sm/k \mapsto \Hom_{\SH(k)}(\Sigma_{S^1}^{p-2q}\Sigma_{\P^1}^q\Sigma^\infty_{\P^1} U_+, \sE).
 \]
In particular, the term $\pi_{-p-q,0}(s^t_p\mS_k)(\Spec k)$ occurring in theorem~\ref{thm:Main} is the evaluation of the presheaf 
 $\pi_{-p-q,0}(s^t_p\mS_k)$ on the final object $\Spec k$ of $\Sm/k$.
 
 We have an $\ell$-local version of theorem~\ref{thm:Main} as well:
  \begin{IntroThm} \label{thm:MainPLocal} Let $k$, $\mS$ and $\mS_k$ be as in theorem~\ref{thm:Main}.  Fix a prime $\ell$ and let $\BP^{(\ell)} \in \SH$ be the associated Brown-Peterson spectrum.  Consider the $\ell$-local Adams-Novikov spectral sequence
\[
E_2^{s,t}(AN)_\ell=\Ext^{s, t}_{\BP^{(\ell)}_*(\BP^{(p)})}(\BP^{(\ell)}_*, \BP^{(\ell)}_*)
\Longrightarrow \pi_{t-s}(\mS)\otimes\ZZ_{(\ell)}
\]
and the  $\ell$-local Atiyah-Hirzebruch spectral sequence
\[
E_1^{p,q}(AH)_\ell=\pi_{-p-q,0}(s^t_p\mS_k)(\Spec k)\otimes\ZZ_{(\ell)}\Longrightarrow\pi_{-p-q,0}\mS_k(\Spec k)\otimes\ZZ_{(\ell)}.
\]
Then there is an isomorphism  
\[
\gamma_1^{p,q}:E_1^{p,q}(AH)_\ell\cong E_2^{3p+q,2p}(AN)_\ell
\]
which induces a sequence of isomorphisms of complexes  (for all $r\ge1$)
\[
\oplus_{p,q}\gamma_{r}^{p,q}: (\oplus_{p,q}E_r^{p,q}(AH)_\ell, d_r)\to (\oplus_{p,q}E_{2r+1}^{3p+q,2p}(AN)_\ell, d_{2r+1})
\] 
\end{IntroThm}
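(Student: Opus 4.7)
The plan is to deduce Theorem~\ref{thm:MainPLocal} from Theorem~\ref{thm:Main} by applying $\ell$-localization termwise and invoking the classical comparison between the $\MU$-based and $\BP^{(\ell)}$-based Adams-Novikov spectral sequences.

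First I would argue that the spectral sequence $E_r^{\ast,\ast}(AH)_\ell$ is simply the termwise $\ell$-localization of $E_r^{\ast,\ast}(AH)$. The Atiyah-Hirzebruch spectral sequence arises from the exact couple obtained by applying $\pi_{\ast,0}(-)(\Spec k)$ to the slice tower for $\mS_k$; since $\ZZ_{(\ell)}$ is flat over $\ZZ$, applying $-\otimes\ZZ_{(\ell)}$ to this couple produces a new exact couple whose associated spectral sequence is the termwise $\ell$-localization of the original, and whose $E_1$-term and abutment match those appearing in Theorem~\ref{thm:MainPLocal}. Combined with Theorem~\ref{thm:Main}, this immediately produces isomorphisms of complexes
\[
\oplus_{p,q}\gamma_r^{p,q}\otimes\ZZ_{(\ell)}:\; \oplus_{p,q} E_r^{p,q}(AH)_\ell \iso \oplus_{p,q} E_{2r+1}^{3p+q,2p}(AN)\otimes\ZZ_{(\ell)}
\]
for every $r\ge 1$, where the target is the termwise $\ell$-localization of the $\MU$-based ANSS.

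Next I would invoke the Quillen idempotent $e\colon \MU_{(\ell)}\to\MU_{(\ell)}$, which realizes $\BP^{(\ell)}$ as a summand of $\MU_{(\ell)}$ and yields a splitting $\MU_{(\ell)}\simeq \bigvee_\alpha \Sigma^{?}\BP^{(\ell)}$ in the classical stable homotopy category. The induced map of Hopf algebroids
\[
(\MU_*,\MU_*\MU)\otimes\ZZ_{(\ell)}\;\longrightarrow\;(\BP^{(\ell)}_*,\BP^{(\ell)}_*\BP^{(\ell)})
\]
is a change-of-rings equivalence (see Ravenel, \emph{Complex Cobordism and Stable Homotopy Groups of Spheres}, Ch.~4), producing a natural isomorphism of $E_2$-pages and of subsequent pages
\[
\Ext^{s,t}_{\MU_*\MU}(\MU_*,\MU_*)\otimes\ZZ_{(\ell)}\;\iso\;\Ext^{s,t}_{\BP^{(\ell)}_*\BP^{(\ell)}}(\BP^{(\ell)}_*,\BP^{(\ell)}_*) = E_2^{s,t}(AN)_\ell
\]
compatible with all ANSS differentials. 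Composing with the map of the previous step yields the desired $\gamma_r^{p,q}$ for Theorem~\ref{thm:MainPLocal}.

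The main (and relatively mild) obstacle is confirming that $\ell$-localization really commutes with the formation of the slice spectral sequence in $\SH(k)$ — equivalently, that one may apply $-\otimes\ZZ_{(\ell)}$ to the slice tower of $\mS_k$ at the level of the tower, the individual layers, or the resulting homotopy groups, and obtain the same spectral sequence up to canonical isomorphism. This rests on the smashing character of $\ell$-localization in $\SH(k)$ together with the boundedness and finiteness properties of the slice tower of $\mS_k$ recalled from \cite{LevineComp}. Once this bookkeeping is settled, Theorem~\ref{thm:MainPLocal} follows formally from Theorem~\ref{thm:Main} and the classical Quillen change-of-rings theorem.
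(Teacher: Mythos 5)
Your proposal is correct in outline, but it takes a genuinely different route from the paper. The paper's proof (given as a remark following the proof of theorem~\ref{thm:Main}) reruns the entire motivic argument with $\MGL$ replaced by the motivic Brown--Peterson spectrum $\BP_{\mot}$ and $\MU$ by $\BP$: the only new point it isolates is that the unit map $\mS_k\otimes\ZZ_{(\ell)}\to\BP_{\mot}$ has cofiber in $\Sigma_T\SH^\eff(k)$, which it verifies by combining Spitzweck's computation of the slices of Landweber exact theories with the known isomorphism $s^t_0\mS_k\to s^t_0\MGL$; this makes the analog of proposition~\ref{prop:SliceComp} go through and the rest of the argument repeats verbatim. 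You instead deduce the $\ell$-local statement formally from theorem~\ref{thm:Main}: localize the exact couple (flatness of $\ZZ_{(\ell)}$ makes this purely algebraic, and since the theorem's $E_1$-term and abutment are literally defined as tensor products, your worry in the last paragraph about smashing localizations is not actually needed), then pass from the $\ell$-localized $\MU$-based ANSS to the $\BP$-based one via the Quillen idempotent. Your approach buys economy — no new motivic input ($\BP_{\mot}$, Vezzosi's construction, Spitzweck's slice computation) is required — while the paper's approach exhibits the $\BP$-based ANSS as arising directly from a motivic tower, which is more in the spirit of the paper and reusable for further motivic arguments. One point you should make explicit: the change-of-rings isomorphism of $E_2$-terms only propagates to all later pages and differentials because it is realized by a map of Tot-towers (the ring map $\MU_{(\ell)}\to\BP$ induces a map of cosimplicial spectra $\MU_{(\ell)}^{\wedge *+1}\to\BP^{\wedge *+1}$, hence of spectral sequences, which is an isomorphism on $E_2$ and therefore on all $E_r$, $r\ge2$); an abstract isomorphism of $E_2$-pages alone would not suffice. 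Since the theorem only refers to $E_2$ and $E_{2r+1}$ with $r\ge1$, working from the $E_2$-page onward is enough.
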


 \begin{IntroRem} The Atiyah-Hirzebruch spectral sequence is often presented as an $E_2$-spectral sequence:
 \[
 E_2^{p,q}(AH;\sE, X)':= H^{p-q}(X, \pi^\mu_{-q}(\sE)(n-q))\Longrightarrow \sE^{p+q,n}(X).
 \]
 Here $\pi^\mu_n(\sE)$ is the {\em homotopy motive} of $\sE$, that is, a canonically determined object of $\DM(k)$ with $\EM(\pi^\mu_n(\sE)(n)[2n])\cong s_n^t\sE$. Thus, for $\sE=\mS_k$, $X=\Spec k$,  this gives
\[
E_r^{p,q}(AH)'= E_{r-1}^{-q,p+2q}(AH)
\]
and theorem~\ref{thm:Main} yields the isomorphism  (for all $r\ge2$)
\[
E_r^{p,q}(AH)'\cong E_{2r-1}^{p-q, -2q}(AN),
\]
answering affirmatively the question raised in \cite[Introduction]{LevineComp}. \end{IntroRem}

\begin{IntroRem} Using an argument based on the Adams spectral sequence, Heller and Ormsby \cite{HellerOrmsby} have proven a version of the comparison theorem of \cite{LevineComp} for real closed fields, showing  the  $C_2$-equivariant homotopy theory agrees with the motivic theory, after $p$-completion. It would be interesting to make a further comparison of associated Adams-Novikov spectral sequences along the lines of this paper.
 \end{IntroRem}

The paper is organized in a somewhat nonsequential fashion: We give the arguments for our main results in the first section, after introducing notation and extracting the necessary technical underpinnings from later sections. The referee felt, and we agree, that many of these technical results are known to the experts or are straightforward extensions of known results, or can simply be accepted as black boxes in the main arguments, subject, of course, to later verification.

We then turn to filling in these technical details.  In section~\ref{sec:Diagram}, we review two constructions of a model category structure on the functor category: the projective model structure and the Reedy model structure. We apply this material to give constructions of slice towers and Betti realizations for motivic homotopy categories associated to functor categories, and prove some useful facts relating these two constructions. In section~\ref{sec:Cosimpl} we specialize to the case of the category $\Delta$ of finite ordered sets, and recall the Bousfield-Kan functor $\Tot$ and  the associated  $\Tot$-tower and spectral sequence. In section~\ref{CosimpCube} we recall from \cite{S1, S2} how the $\Tot$-tower can be described using cubical constructions, which are technically easier to  handle. As an application, we show how applying the slice tower termwise to the truncated cosimplicial objects arising in the motivic Adams-Novikov tower gives approximations to the slice tower for the motivic sphere spectrum (proposition~\ref{prop:SliceComp}).
 
In section~\ref{sec:Decalage} we adapt Deligne's d\'ecalage construction to the setting of cosimplicial objects in a stable model category that admits a $t$-structure and associated Postnikov tower, this latter construction replacing the canonical truncation of a complex. The main comparison result is achieved in proposition~\ref{prop:Dec}. This is the technical tool that enables us to   compare the Atiyah-Hirzebruch and Adams-Novikov spectral sequences. The treatment of this topic is less than optimal, as one should expect a more general extension of Deligne's d\'ecalage construction to some version of filtered objects in a model category, but the special case handled here suffices for our main application.

We would like to thank the referees for their careful reading of an earlier version and their very helpful suggestions for improving this paper. Especially useful was the suggestion that   the paper be reorganized by putting the main arguments first. Directing my attention to the works of Sinha \cite{S1, S2}, which give the reformulation of  the truncated $\Tot$-construction as a punctured cube, allowed me replace my considerably less elegant arguments for some weaker statements in \S3 with Sinha's results.

Finally, I would like to thank the Alexander von Humboldt Foundation for their generous support.

 \begin{IntroRem}
The reader will have noticed a ``clash of spectral sequence cultures'' between the slice and Adams-Novikov spectral sequences. The slice spectral sequence follows the Cartan-Eilenberg convention, standard in homological algebra and algebraic geometry, in which the $d_r$ differential has bidegree $(r, 1-r)$, and $E_\infty^{p,q}$ contributes to $\pi_{-p-q}$ or $H^{p+q}$. The Adams-Novikov spectral sequence follows the Bousfield-Kan convention, in which the $d_r$ differential has bidegree $(r, r-1)$ and $E_\infty^{p,q}$ contributes to $\pi_{q-p}$. The evident transformation $E^{p,q}(CE)=E^{p,-q}(B.K.)$ relates the two conventions. We will use the Cartan-Eilenberg convention throughout this paper, except for the Adams-Novikov spectral sequence and its $\ell$-local version. Contrary to standard practice, we will be using the Cartan-Eilenberg convention for the spectral sequence of the Tot-tower in \S\ref{sec:Cosimpl}; we felt that using the Bousfield-Kan indexing would unnecessarily complicate the proof of proposition~\ref{prop:Dec}.  In any case, the indexing for the convergent should make the choice of convention clear. 
\end{IntroRem}

\section{The Adams-Novikov spectral sequence}\label{sec:AN}
We have buried most of the technical results needed for our argument in later sections, so as to present the main thread  in as direct a fashion as possible. As a preliminary to the main argument, we  extract the relevant aspects of these technical constructions.

We have the model category of pointed spaces (i.e., pointed simplicial sets) $\Spc_\bullet$ and the associated model category of suspension spectra $\Spt$, with homotopy categories $\sH_\bullet$ and $\SH$, respectively. We fix a perfect base field $k$, giving us the model category of pointed spaces over $k$, $\Spc_\bullet(k)$ and the model category of $T$-spectra $\Spt_T(k)$, with associated homotopy categories $\sH_\bullet(k)$ and $\SH(k)$; the model category structures are discussed in the examples~\ref{exs:FunctorialConstructions}. We have Voevodsky's slice tower in $\SH(k)$, written as a tower 
\[
\ldots\to f^t_{n+1}\to f^t_n\to \ldots\to \id
\]
of endofunctors. For a given embedding $\sigma:k\hookrightarrow\CC$, we have the associated Betti realization functor
\[
\Re_B:\SH(k)\to \SH.
\]

These constructions extend to functor categories, using  the machinery discussed in  \S\ref{sec:Diagram}. For a Reedy category $\sS$, we have the functor category $\Spt_T(k)^\sS$ with the model structure as described in \S \ref{subsec:ModelStructure}, and the associated homotopy category $\Ho\Spt_T(k)^\sS$.\footnote{Not to be confused with the functor category $(\Ho\Spt_T(k))^\sS$. To avoid additional parentheses, we will always use the notation $\Ho\sM^\sS$ for the homotopy category of a model category of functors $\sM^\sS$ and reserve $(\Ho\sM)^\sS$ for the category of functors to the homotopy category.}  The slice endofunctors $f^t_n$ extend to the functor category to give a tower of endofunctors
\[
\ldots\to f^{\sS,t}_{n+1}\to f^{\sS,t}_n\to \ldots\to \id
\]
of $\Ho\Spt_T(k)^\sS$. The two slice towers are compatible for each $s\in \sS$  via the evaluation functor $i_{s*}:\Ho\Spt_T(k)^\sS\to \SH(k)$. Similarly, we have the functor model category $\Spt^\sS$ and  the associated Betti realization functor
\[
\Re^\sS_B:\Ho\Spt_T(k)^\sS\to \Ho\Spt^\sS,
\]
again, compatible with $\Re_B$ via the evaluation functors $i_{s*}$ for each $s\in \sS$.

We have a parallel situation with respect to the classical Postnikov tower in $\SH$:
\[
\ldots\to f_{n+1}\to f_n\to \ldots\to \id,
\]
where for a spectrum $E$, $f_nE\to E$ is the usual $(n-1)$-connected cover. This passes to the functor category, giving the tower of endofunctors of $\Ho\Spt^\sS$
\[
\ldots\to f^\sS_{n+1}\to f^\sS_n\to \ldots\to \id,
\]
compatible with the classical Postnikov tower via the evaluation functors.

We denote the homotopy cofiber of $f^{\sS,t}_b\to f^{\sS,t}_a$ by $f^{\sS,t}_{a/b}$ and define $f^{\sS}_{a/b}$ similarly.

The second type of tower that we will need is the $\Tot$-tower associated to a cosimplicial object in a suitable model category (see \S\ref{sec:Cosimpl} for details). 
For a pointed model category $\sM$ we give the functor category $\sM^\Delta$ the Reedy model structure. We consider the functor $\Tot:\sM^\Delta\to \sM$ which sends a cosimplicial object $\sX:\Delta\to \sM$ to $\Tot(X):=\sHom(\Delta[*], \sX)$, where $\Delta[*]$ is the cosimplicial space $n\mapsto \Delta[n]$. Restricting $\Delta[n]$ to its $k$-skeleton gives the cosimplicial space  $n\mapsto \Delta[n]^{(k)}$ and the associated object $\Tot_{(k)}(\sX):=\sHom(\Delta[*]^{(k)}, \sX)$, which fit together to give the Tot-tower
\[
\Tot(\sX)\to \ldots\to \Tot_{(k+1)}(\sX)\to \Tot_{(k)}(\sX)\to \ldots\to \Tot_{(0)}(\sX)\to \Tot_{(-1)}(\sX)=\pt.
\]

The slice tower   and $\Tot$-tower generate spectral sequences; see \S\ref{subsec:TotSSeq} for a discussion of the $\Tot$-tower spectral sequence. The slice tower spectral sequence gives rise to a {\em motivic Atiyah-Hirzebruch spectral sequence} $E(AH)$, and the $\Tot$-tower spectral sequence $E(\Tot, \sX)$ applied to the cosimplicial spectrum $n\mapsto \MU^{\wedge n+1}$ gives the  Adams-Novikov spectral sequence $E(AN)$. The classical Postnikov tower yields the classical Atiyah-Hirzebruch spectral sequence. 

We may combine the $\Tot$ construction with the Postnikov tower as follows: Given a cosimplicial spectrum $m\mapsto \sX^m\in \Spt$, we can apply the functor $f_n$ termwise, giving the tower 
\[
\ldots\to f^\Delta_{n+1}\sX\to f^\Delta_n\sX\to \ldots\to \sX
\]
in $\Spt^\Delta$; we may then apply the functor $\Tot$, giving us the tower
\[
\ldots \to \Tot f^\Delta_{n+1}\sX\to \Tot f^\Delta_n \sX\to \ldots \to \Tot \sX
\]
 in $\Spt$. We call the resulting spectral sequence the {\em d\'ecalage} of the $\Tot$-spectral sequence, and denote this by $E(\Dec, \sX)$.  Our main technical result concerning these spectral sequences is Proposition~\ref{prop:Dec}, which states that the spectral sequences $E(\Dec, \sX)$ and $E(\Tot, \sX)$ are equal up to a reindexing.

We now apply these tools to the problem of comparing the slice spectral sequence for the motivic sphere spectrum with the Adams-Novikov spectral sequence for the topological sphere spectrum.

We will call a spectral sequence  $(E^{p,q}_r, d_r)$   {\em strongly convergent to $\Pi_*$} if it is bounded and converges to $\Pi_*$ (see \cite[\S 5.2.5]{WeibelHA}). Under the Cartan-Eilenberg indexing convention, this means simply that  for each $n$ there is a finite, exhaustive and separated filtration $F_*\Pi_n$ of $\Pi_n$, and an integer $r(n)$ such that $\gr_F^p\Pi_n\cong E^{p, n-p}_\infty=E^{p, n-p}_r$ for all $p$ and all $r\ge r(n)$. We call a spectral sequence $E_a^{p,q}\Longrightarrow \Pi_{-p-q}$   strongly convergent if the spectral sequence is strongly convergent to $\Pi_*$.

Let  $k$ an algebraically closed field with an embedding $\sigma:k\hookrightarrow\CC$ and let $\mS_k\in \Spt_T(k)$ be the motivic sphere spectrum. Let $\mS\in \Spt$ denote the classical sphere spectrum. The Betti realization of the slice tower for $\mS_k$ gives a tower in $\SH$
\[
\ldots\to \Re_B f^t_{n+1}\mS_k\to \Re_B f^t_n\mS_k\to  \ldots\to \Re_B f^t_0\mS_k=\mS,
\]
with $n$th layer equal to $\Re_B s^t_n\mS_k$. This gives the spectral sequence
\[
E_2^{p,q}(AH)'=\pi_{-p-q}\Re_B s^t_{-q}\mS_k\Longrightarrow \pi_{-p-q}\mS.
\]
One has a similar spectral sequence using the slice tower itself
\[
\ldots\to f^t_{n+1}\mS_k\to   f^t_n\mS_k\to  \ldots\to  f^t_0\mS_k,
\]
namely,
\[
E_2^{p,q}(AH)'_{\mot}=\pi_{-p-q,0}(s^t_{-q}\mS_k)(k)\cong H^{p-q}(k,\pi^\mu_{-q}(-q))\Longrightarrow \pi_{-p-q,0}(\mS_k)(k).
\]
We have shown in \cite[theorem 4]{LevineConv} that this latter spectral sequence is strongly convergent and in \cite[proposition 6.4,  theorem 6.7]{LevineComp} that the Betti realization functor gives an isomorphism of $ \pi_{-p-q,0}(\mS_k)(k)$ with $\pi_{-p-q}\mS$, as well as an isomorphism of the spectral sequence $E(AH)'_{\mot}$ with the spectral sequence $E(AH)'$. One can thus say that the spectral sequence $E(AH)'$ is of ``motivic origin''.

In addition, in \cite[theorem 4]{LevineComp} we identified $E_2^{p,q}(AH)'$ with an $E_2$ term of the Adams-Novikov spectral sequence for $\mS$:
\[
E_2^{p,q}(AH)'\cong E_2^{p-q,2q}(AN).
\]
We wish to extend this result by showing that the spectral sequence $E(AH)'$ agrees with the Adams-Novikov spectral sequence $E(AN)$, after a suitable reindexing. 

In principle, the argument should go like this:  Let  $\tilde{\MU}$ be a strict monoid object in symmetric spectra representing the usual $\MU$ in $\SH$. Let $\tilde{\MU}^{\wedge *+1}$ be the cosimplicial (symmetric) spectrum $n\mapsto \tilde{\MU}^{\wedge n+1}$ with the $i$th coface map inserting the unit map in $i$th spot, and the  $i$th codegeneracy map taking the product of the $i$th and $(i+1)$st factors. The Adams-Novikov spectral sequence is just the $\Tot$-tower spectral sequence \eqref{eqn:TowerSS} associated to the cosimplicial symmetric spectrum $\tilde{\MU}^{\wedge *+1}$. Let $\tilde\MGL^{\wedge *+1}$ be the motivic analog to $\tilde{\MU}^{\wedge *+1}$, giving us a cosimplicial $T$-spectrum $n\mapsto \tilde\MGL^{\wedge n+1}$, with coface and codegeneracy maps defined as for $\tilde{\MU}^{\wedge *+1}$. One could hope to have a ``total $T$-spectrum functor" $\Tot:\Spt_T(k)^\Delta\to \Spt_T(k)$ and weak equivalences
\[
\mS_k\cong \Tot\tilde\MGL^{\wedge *+1},\quad f_p^t\mS_k\cong \Tot f_p^t\tilde\MGL^{\wedge *+1},
\]
where $f_p^t\tilde\MGL^{\wedge *+1}$ is the cosimplicial spectrum $n\mapsto f_p^t\tilde\MGL^{\wedge n+1}$, using a suitable functorial model for $f^t_p$ in $\Spt^\Sigma_T(k)$. 

The layers of $\tilde\MGL^{\wedge n+1}$ for the slice filtration are known by work of Hopkins-Morel (see \cite{HopkinsMorel}) and Hoyois \cite{Hoyois}, and one can show that the Betti realization of the slice  $s_p^t\tilde\MGL^{\wedge n+1}$ is just the $2p$th layer $f_{2p/2p+1}\MU^{\wedge n+1}$ in the Postnikov tower for $\MU^{\wedge n+1}$. Thus, one could hope  to have an isomorphism in $\Ho\Spt^\Delta$
\[
\Re_B f_p^{\Delta,t}\tilde\MGL^{\wedge *+1}\cong f^\Delta_{2p-1}\tilde{\MU}^{\wedge *+1}\cong f^\Delta_{2p}\tilde{\MU}^{\wedge *+1}.
\]

After changing the $E_2$ Atiyah-Hirzebruch spectral sequence to an $E_1$ spectral sequence
\[
E_1^{p,q}(AH):=\pi_{-p-q,0}(s_p^t\mS_k)(k)\Longrightarrow \pi_{-p-q,0}\mS,
\]
we would then have an isomorphism 
\[
E_1^{p,q}(AH)\cong E_1^{2p,q-p}(\Dec, \tilde{\MU}^{\wedge *}) 
\]
leading to the isomorphisms
\[
E_r^{p,q}(AH)\cong E_{2r-1}^{2p,q-p}(\Dec, \tilde{\MU}^{\wedge *+1})\cong E_{2r}^{2p,q-p}(\Dec, \tilde{\MU}^{\wedge *})
\]
and corresponding isomorphisms of complexes.

Using proposition~\ref{prop:Dec} (for spectra) would then give the sequence of isomorphisms
\[
E_r^{p,q}(AH)\cong E_{2r+1}^{3p+q,2p}(AN)
\]
and corresponding isomorphisms of complexes. This would then give the isomorphisms 
\[
E_r^{p,q}(AH)'\cong E_{2r-1}^{p-q, -2q}(AN).
\]
for all $r\ge2$.

We prefer to avoid the technical problems arising from the compatibility of the Betti realization with the functor $\Tot$,  and from checking if $\mS_k\to \Tot\tilde\MGL^{\wedge *+1}$ is an isomorphism; instead we work with the approximations $\Tot_{(N)}\tilde\MGL^{\wedge *+1}$ and 
 $\Tot_{(N)}\tilde{\MU}^{\wedge *+1}$. These suffice to give the desired isomorphisms of $E_r$-complexes, by simply taking $N$ sufficiently large and using  proposition~\ref{prop:SliceComp} to show that the truncation $\Tot_{(N)}\tilde\MGL^{\wedge *+1}$ approximates $\mS_k$ sufficiently well with respect to the slice tower.

 We deal with the compatibility of $\Tot_{(N)}$ and $\Re_B$ as follows: for a cosimplicial object $n\mapsto \sX^n$ in a pointed simplicial model category $\sM$, we have an associated punctured $(N+1)$-cube 
  \[
 \phi^{N+1}_{0*}\sX:\sq_0^{N+1}\to \sM.
 \]
 See \eqref{eqn:CubeFunctor} for the definition; this arises by identifying the punctured $(N+1)$-cube $\sq_0^{N+1}$ with the category $\Delta_{inj}/[N]$, where $\Delta_{inj}\subset \Delta$ is the subcategory of injective maps. In addition, we have an isomorphism in $\Ho\sM$ (proposition~\ref{prop:SimpVCube})
 \[
 \holim_{\sq_0^{N+1}} \phi^{N+1}_{0*}\sX\cong \Tot_{(N)}\sX.
 \]
Moreover, using the isomorphism \eqref{eqn:SqInduction}, one can describe $\holim_{\sq_0^{N+1}} \phi^{N+1}_{0*}\sX$ as an iterated  homotopy pullback.  As the functor $\Re_B$ is exact, it is compatible (up to weak equivalence) with taking an iterated homotopy pullback, giving us a canonical isomorphism 
 \begin{equation}\label{eqn:ReComp}
 \Re_B(\Tot_{(N)}(\sE))\cong \Tot_{(N)}(\Re_B^\Delta(\sE))
 \end{equation}
  in $\SH$ for $\sE:\Delta\to \Spt_T(k)$ a cosimplicial $T$-spectrum.  
 
 We drop the tilde from the notation, considering both $\MU$ and $\MGL$ as objects in the appropriate category of symmetric spectra. We will also systematically replace objects with their fibrant replacements without altering the notation, so all objects that appear below will be assumed to be fibrant (or even cofibrant and fibrant, if need be).

We have the cosimplicial objects 
\[
\MGL^{\wedge *+1}\in \Spt_T^\Sigma(k)^\Delta,\
\MU^{\wedge *+1}\in (\Spt^\Sigma)^\Delta.
\]
Restricting to the subcategory $\Delta^{\le N}$ via the inclusion functor $\iota_N:\Delta^{\le N}\to \Delta$
gives us the  truncated objects
\[
\iota_{N*}\MGL^{\wedge *+1}\in \Spt_T^\Sigma(k)^{\Delta^{\le N}},\ 
\iota_{N*}\MU^{\wedge *+1}\in (\Spt^\Sigma)^{\Delta^{\le N}}. 
\]
As the Betti  realization of $\MGL$ is isomorphic to $\MU$ and $\Re_B$ is an exact monoidal functor, we have the isomorphism in $\Ho[(\Spt^\Sigma)^{\Delta^{\le N}}]$
\[
\Re_B^{\Delta^{\le N}}\iota_{N*}\MGL^{\wedge *+1}\cong \iota_{N*} \MU^{\wedge *+1}.
\]
Our main task is to identify the tower 
\begin{multline*}
\ldots\to \Re_B^{\Delta^{\le N}}f_{n+1}^{\Delta^{\le N},t}\iota_{N*} \MGL^{\wedge *+1}\to\Re_B^{\Delta^{\le N}}f_n^{\Delta^{\le N},t}\iota_{N*} \MGL^{\wedge *+1}\\
\to\ldots\to
\Re_B^{\Delta^{\le N}}\iota_{N*} \MGL^{\wedge *+1}.
\end{multline*}

As notation, for $\sE\in \Spt_T(k)$, $I=(i_1,\ldots, i_r)$ an index with $0\le i_j\in \ZZ$,  $b_I=b_1^{i_1}\cdot\ldots\cdot b_r^{i_r}$ a monomial, with $b_j$ of degree $n_j$, we define 
\[
\sE\cdot b_I:=\Sigma_T^{|I|}\sE, 
\]
where $|I|:=\sum_{j=1}^r n_j\cdot i_j$.  More generally, if $\{b^i_j\}$ is a set of variables, $i=1,\ldots m$, with some assigned positive integral degrees,  we let $\sE[\{b_j^i\}]$ denote the coproduct of the $\sE b^1_{I_1}\cdot \ldots\cdot b^m_{I_m}$.

\begin{lem}\label{lem:MGLDecomp} We have an isomorphism of left $\MGL$-modules
\[
\MGL^{\wedge m+1}\cong  \MGL[b_\bullet^{(1)},\ldots, b_\bullet^{(m)}]
\]
where $b_\bullet^{(j)}$ is the collection of variables $b_1^{(j)}, b_2^{(j)},\ldots$, with $b_n^{(j)}$ of degree $n$. 
\end{lem}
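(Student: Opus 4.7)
The plan is to argue by induction on $m\ge1$, with the case $m=1$ carrying all of the geometric input and the inductive step being essentially formal bookkeeping of the smash-factor that carries the left $\MGL$-module structure. Throughout, we work with cofibrant-fibrant models in $\Spt_T^\Sigma(k)$ as stipulated earlier, so that all isomorphisms below may be taken in $\Ho\,\Spt_T^\Sigma(k)$ (or equivalently in $\SH(k)$).

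For the base case $m=1$, I would appeal to the motivic analog of Quillen's computation of $MU_*MU$, which gives an isomorphism
\[
\MGL\wedge\MGL \;\cong\; \MGL[b_\bullet]
\]
of left $\MGL$-modules with $b_n$ of degree $n$ (i.e.\ shifting by $\Sigma_T^n$). The classes $b_n$ are produced from the universal orientation of $\MGL$ via the projective bundle formula: orienting the tautological line bundle on $\P^\infty = B\G_m$ yields, for each $n\ge 1$, a map $\Sigma_T^n\MGL \to \MGL\wedge\MGL$ (the motivic analog of the classical Boardman class), and the resulting map $\MGL[b_\bullet]\to \MGL\wedge \MGL$ of left $\MGL$-modules is checked to be an isomorphism using the cellular structure of $\MGL$ over itself. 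I would cite the motivic Landweber/Hopkins--Morel results (as developed by Hopkins--Morel, Hoyois, Naumann--Østvær, and Borghesi--Vezzosi) for the existence of this splitting.

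For the inductive step, suppose $\MGL^{\wedge m+1} \cong \bigvee_I \Sigma_T^{|I|}\MGL$ as a left $\MGL$-module (action on the leftmost factor), the wedge indexed by monomials $b_I$ in the variables $b^{(1)}_\bullet,\ldots,b^{(m)}_\bullet$. Writing $\MGL^{\wedge m+2} = \MGL\wedge \MGL^{\wedge m+1}$, with the new $\MGL$ as the outermost (leftmost) factor, and using that $\MGL\wedge(-)$ commutes with arbitrary wedges, we obtain
\[
\MGL^{\wedge m+2} \;\cong\; \bigvee_I \Sigma_T^{|I|}\bigl(\MGL\wedge \MGL\bigr)
\]
as a left $\MGL$-module with action on the outermost factor. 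Substituting the base-case decomposition $\MGL\wedge\MGL \cong \MGL[b^{(m+1)}_\bullet]$ into each summand (with the outermost $\MGL$ playing the role of the base $\MGL$) produces the additional polynomial generators $b^{(m+1)}_\bullet$, and reassembling gives $\MGL^{\wedge m+2}\cong \MGL[b^{(1)}_\bullet,\ldots,b^{(m+1)}_\bullet]$.

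The only genuine obstacle is the base case, which is the motivic Quillen--Landweber identification of $\MGL\wedge\MGL$; the rest of the argument is formal, and the single point requiring care is ensuring that the left $\MGL$-module structure is tracked correctly as one rewrites $\MGL^{\wedge m+2}=\MGL\wedge\MGL^{\wedge m+1}$ and pulls wedges past the outermost smash with $\MGL$.
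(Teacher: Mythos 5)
Your proposal is correct and follows essentially the same route as the paper: both reduce to the case $m=1$ (the paper dismisses the induction with ``it clearly suffices,'' you spell it out), both take as input the motivic Quillen-type computation that $\pi_{*,*}(\MGL\wedge\MGL)$ is a free left $\pi_{*,*}\MGL$-module on the monomials in the classes $b_n$ (the paper cites Naumann--Spitzweck--{\O}stv{\ae}r), and both upgrade the resulting $\pi_{*,*}$-isomorphism of left $\MGL$-modules to an isomorphism in $\SH(k)$ via the Dugger--Isaksen cellularity of $\MGL$ and $\MGL\wedge\MGL$.
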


\begin{proof} We write $\pi_{*,*}\MGL$ for  $\pi_{*,*}\MGL(k)$, etc.  It clearly suffices to handle the case $m=1$. For this, \cite[proposition 6.2]{NSO} gives us elements $b_n\in \pi_{2n,n}(\MGL\wedge\MGL)$ giving rise to an isomorphism of left $\pi_{*,*}\MGL$-modules
\[
\pi_{*,*}(\MGL\wedge\MGL)\cong\pi_{*,*}\MGL[b_1, b_2,\ldots].
\]
For each monomial $b_I$ in $b_1, b_2, \ldots$, we view $b_I\in \pi_{2|I|,|I|}(\MGL\wedge\MGL)$ as a  map $b_I:\Sigma^{|I|}_T\mS_k\to \MGL\wedge \MGL$; using the product in $\MGL$, this gives us the left $\MGL$-map
\[
\vartheta:=\sum_I b_I:\oplus_I\Sigma^{|I|}_T\MGL\to \MGL\wedge \MGL.
\]
Now, $\MGL$ is stably cellular \cite[theorem 6.4]{DuggerIsaksen} hence $\oplus_I\Sigma^{|I|}_T\MGL$ and  $\MGL\wedge\MGL$ are stably cellular (the second assertion follows from \cite[lemma 3.4]{DuggerIsaksen}). Clearly $\vartheta$ induces an isomorphism on $\pi_{a,b}$ for all $a,b$, hence by \cite[corollary 7.2]{DuggerIsaksen}, $\vartheta$ is an isomorphism in $\SH(k)$. 
\end{proof}

\begin{lem}\label{lem:MGLTower}  (1)  $\Re_B(f_n^t\MGL^{\wedge m+1})$ is $(2n-1)$-connected for all $n, m\ge0$.\\
(2) The map 
\[
f_{2n}\Re_B(f_n^t\MGL^{\wedge m+1})\to f_{2n}\Re_B(\MGL^{\wedge m+1})
\]
induced by the natural transformation $f_n^t\to \id$
 and the map  
 \[
 f_{2n}\Re_B(f_n^t\MGL^{\wedge m+1})\to \Re_B(f_n^t\MGL^{\wedge m+1})
 \]
induced by the natural transformation $f_{2n}\to\id$ are weak equivalences.\\
(3) The  map 
\[
f_{2n}^{\Delta^{\le N}}\Re^{\Delta^{\le N}}_B(f_n^{\Delta^{\le N},t}\iota_{N*}\MGL^{\wedge *+1})\to f_{2n}^{\Delta^{\le N}}\Re_B\iota_{N*}\MGL^{\wedge *+1}
\]
induced by the natural transformation $f_n^{\Delta^{\le N},t}\to\id$ and the map
\[
f_{2n}^{\Delta^{\le N}}\Re^{\Delta^{\le N}}_B(f_n^{\Delta^{\le N},t}\iota_{N*}\MGL^{\wedge *+1})\to \Re^{\Delta^{\le N}}_B(f_n^{\Delta^{\le N},t}\iota_{N*}\MGL^{\wedge *+1})
\]
induced by the natural transformation $f_{2n}^{\Delta^{\le N}}\to\id$ are weak equivalences.
\end{lem}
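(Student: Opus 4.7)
The plan is to reduce all three parts to statements about $\Re_B(f_j^t\MGL)$ for $\MGL$ itself, via the splitting of Lemma~\ref{lem:MGLDecomp}, and then invoke the Hopkins-Morel-Hoyois computation of the slices of $\MGL$ together with its compatibility with Betti realization.

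The first step is to apply Lemma~\ref{lem:MGLDecomp} to decompose $\MGL^{\wedge m+1} \cong \bigoplus_I \Sigma_T^{|I|}\MGL$ as a coproduct of suspensions of $\MGL$. Since the slice functor $f_n^t$ commutes with arbitrary coproducts and satisfies $f_n^t\Sigma_T \cong \Sigma_T f_{n-1}^t$, this yields
\[
f_n^t\MGL^{\wedge m+1} \cong \bigoplus_I \Sigma_T^{|I|} f_{n-|I|}^t\MGL.
\]
As $\Re_B$ is exact, commutes with coproducts, and sends $\Sigma_T$ to $\Sigma^2$, part (1) reduces to showing that each summand $\Sigma^{2|I|}\Re_B(f_{n-|I|}^t\MGL)$ is $(2n-1)$-connected. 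When $|I| \ge n$ this is immediate from the fact that $\MU$ is connective, so $\Sigma^{2|I|}\MU$ is $(2|I|-1)$-connected. When $|I| < n$, the required connectivity $\Re_B(f_j^t\MGL)$ is $(2j-1)$-connected, $j := n-|I| > 0$, will be deduced from the Hopkins-Morel-Hoyois identification of the slices $s_q\MGL$ with shifted motivic Eilenberg-MacLane spectra built out of the Lazard ring and $\ZZ(q)[2q]$: their Betti realizations are shifted topological Eilenberg-MacLane spectra with homotopy concentrated in degree $2q$, and the strong convergence of the Betti-realized slice tower established in \cite{LevineConv, LevineComp} upgrades this layer-by-layer connectivity to the required global statement.

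For part (2), the second displayed map is a weak equivalence directly from part (1): $f_{2n}X \to X$ is an equivalence whenever $X$ is $(2n-1)$-connected. For the first map, the plan is to apply $f_{2n}\Re_B$ to the cofiber sequence
\[
f_n^t\MGL^{\wedge m+1} \to \MGL^{\wedge m+1} \to C
\]
and show $f_{2n}\Re_B(C) \simeq 0$. Since $\MGL^{\wedge m+1}$ is effective, $f_0^t\MGL^{\wedge m+1} = \MGL^{\wedge m+1}$, so $C$ admits a finite filtration with associated graded pieces the slices $s_q\MGL^{\wedge m+1}$ for $0 \le q < n$. By the same reduction used in part (1) together with the Hopkins-Morel-Hoyois computation, each $\Re_B(s_q\MGL^{\wedge m+1})$ is a generalized Eilenberg-MacLane spectrum with homotopy concentrated in degree $2q < 2n$, so $f_{2n}$ annihilates each layer and hence $C$.

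Part (3) should then follow formally: the model structure on $\Spt^{\Delta^{\le N}}$ recalled in Section~\ref{sec:Diagram} has weak equivalences detected pointwise by the evaluation functors $i_{m*}$, and the diagrammatic functors $f_n^{\Delta^{\le N},t}$, $f_{2n}^{\Delta^{\le N}}$, and $\Re_B^{\Delta^{\le N}}$ are pointwise compatible with their non-diagrammatic counterparts. Thus the assertions of part (3) reduce, at each simplicial level $[m] \in \Delta^{\le N}$, to parts (1) and (2). The main obstacle I anticipate is the appeal to the Hopkins-Morel-Hoyois slice computation and its interaction with Betti realization of the motivic Eilenberg-MacLane functor $\EM$; once that input is accepted as a black box, the rest of the argument is a formal exercise in connectivity and cofiber sequences.
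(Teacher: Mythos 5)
Your proposal is correct and follows essentially the same route as the paper: decompose $\MGL^{\wedge m+1}$ via Lemma~\ref{lem:MGLDecomp}, use the Hopkins--Morel--Hoyois identification of the slices so that $\Re_B s^t_q\MGL^{\wedge m+1}$ is a wedge of copies of $\Sigma^{2q}EM(\ZZ)$, deduce (1) from strong convergence of the realized slice tower, and get (3) from (2) because weak equivalences in the functor category are detected pointwise. The only (cosmetic) difference is in (2), where the paper reduces to $m=0$ and reads off $\pi_*\Re_Bf^t_n\MGL\to\pi_*\Re_B\MGL$ from the degenerate slice spectral sequences, whereas you run a cofiber-sequence argument showing $f_{2n}$ kills $\Re_B f^t_{0/n}\MGL^{\wedge m+1}$; both rest on the same input and are equally valid.
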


\begin{proof} We first prove (1). Recall that a $\P^1$-spectrum $\sE$ is  {\em topologically $c$-connected} if the homotopy sheaf $\pi_{n+m,m}\sE$ is zero for all $n\le c$ and all $m\in \ZZ$.  It follows from Morel's $\A^1$-connectedness theorem \cite{MorelConn} that $\pi_{a+b,b}\MGL_n=0$  for $a<n$, $b\ge 0$. Thus the stable homotopy sheaves $\pi_{a+b,b}\MGL$ are zero for $a<0$, that is, $\MGL$ is topologically -1-connected. By  \cite[proposition 3.2]{LevineGW}   $f_n^t\MGL$ is also topologically -1-connected, hence by \cite[theorem 5.2]{LevineComp}  $\Re_B(f_n^t\MGL)$ is $(n-1)$-connected for all $n\ge0$.

We have an isomorphism (of left $\MGL$-modules)
\begin{equation}\label{eqn:MGLDecomp2}
\MGL^{\wedge m+1}\cong \oplus_{I=(i_1,\ldots, i_m)}\Sigma_T^{|I|}\MGL
\end{equation}
from which it follows that $f_n^t\MGL^{\wedge m+1}$ is topologically -1-connected and that $\Re_Bf_n^t\MGL^{\wedge m+1}$  is $(n-1)$-connected for all $n\ge0$. Thus the spectral sequence associated to the tower
\[
\ldots\to \Re_Bf_{N+1}^t\MGL^{\wedge m+1}\to \Re_Bf_N^t\MGL^{\wedge m+1}\to\ldots\to \Re_Bf_n^t\MGL^{\wedge m+1} 
\]
converges strongly to the homotopy groups of $\Re_Bf_n^t\MGL^{\wedge m+1}$. As both $f_N^t$ and $\Re_B$ are exact functors, the $\ell$th layer in this tower is $\Re_Bs_{n+\ell}^t\MGL^{\wedge m+1}$, so to prove (1), it suffices to show that $\Re_Bs_{n+\ell}^t\MGL^{\wedge m+1}$ is $(2n-1)$-connected for all $\ell\ge0$.

By the Hopkins-Morel-Hoyois theorem \cite{Hoyois, HopkinsMorel} and the above computation of  $\MGL^{\wedge m+1}$, $s^t_N\MGL^{\wedge m+1}$ is a finite coproduct of copies of $\Sigma_T^NM\ZZ$, where $M\ZZ$ is the motivic Eilenberg-MacLane spectrum representing motivic cohomology. In addition, $\Re_B(M\ZZ)\cong EM(\ZZ)$, hence $\Re_Bs^t_N\MGL^{\wedge m+1}$ is a finite coproduct of copies of $\Sigma^{2N}EM(\ZZ)$, and is thus $(2N-1)$-connected. 

For (2), applying $\Re_B$ to the decomposition \eqref{eqn:MGLDecomp2} gives
\[
\Re_B(\MGL^{\wedge m+1})\cong \oplus_I\Sigma^{2|I|}\MU;
\]
since $f_n^t\circ\Sigma_T\cong \Sigma_T\circ f_{n-1}^t$, and $f_{m}\circ \Sigma\cong \Sigma\circ f_{m-1}$, this reduces the proof of (2) to the case $m=0$. Since 
\[
\Re_Bs_N^t\MGL\cong \Sigma^{2N}EM(\ZZ)\otimes \MU^{-2N}
\]
the strongly convergent spectral sequences
\begin{align*}
&E_1^{p,q}=\pi_{-p-q}\Re_Bs_p^t\MGL\Longrightarrow \pi_{-p-q}\Re_B\MGL,\\
&E_1^{p,q}=\pi_{-p-q}\Re_Bs_p^t\MGL\Longrightarrow \pi_{-p-q}\Re_Bf_n^t\MGL
\end{align*}
degenerate at $E_1$ and show that $\pi_m\Re_Bf_n^t\MGL\to \pi_m\Re_B\MGL$ is an isomorphism for $m\ge 2n$ and $\pi_m\Re_Bf_n^t\MGL=0$ for $m<2n$. Thus $\Re_Bf_n^t\MGL\to \Re_B\MGL\cong \MU$ is isomorphic (in $\SH$) to the $(2n-1)$-connected cover of $\MU$, proving (2).

(3) follows immediately from (2),  by the definition of the weak equivalences in the functor category $\sM^\sS$. \end{proof}

Having gone through these preliminaries, we can now prove our main result:
 
 \begin{proof}[Proof of theorem~\ref{thm:Main}] We replace all objects with fibrant models, without changing the notation. 

 Denote the spectral sequence  \eqref{eqn:TowerSS2} for indices $A=0$, $B= \infty$ and cosimplicial spectrum $\sE$  by $E(\Tot, \sE)$. We let $E(\Dec, \sE)$ denote the spectral sequence \eqref{eqn:DecTowerSS} for  $A=0$, $B= \infty$. The Adams-Novikov spectral sequence may be constructed as the spectral sequence $E(\Tot, \MU^{\wedge *+1})$ associated to the cosimplicial spectrum
\[
n\mapsto \MU^{\wedge n+1}.
\]

For $k\subset K$ an extension of algebraically closed fields, the base extension induces an isomorphism of the spectral sequence $E(AH)$ for $k$ with the spectral sequence $E(AH)$ for $K$; this follows  from e.g. \cite[theorem 8.3]{LevineComp}. Thus, we may assume that $k$ admits an embedding into $\CC$, giving the associated Betti realization functor
\[
\Re_B:\SH(k)\to \SH.
\]
 
By lemma~\ref{lem:MGLTower} and the isomorphism \eqref{eqn:ReComp},  we have an isomorphism in $\SH$
\[
\Re_B(\Tot_{(N)}f^{\Delta, t}_{a/b}\MGL^{\wedge *+1})\cong \Tot_{(N)}f^\Delta_{2a/2b}\MU^{\wedge *+1}
\]
for all $a\le b$, including $b=\infty$, compatible with respect to the maps in the slice tower for  $\MGL^{\wedge *+1}$ and the Postnikov tower for
$\MU^{\wedge *+1}$.

By proposition~\ref{prop:SliceComp}, this  gives us an isomorphism in 
$\SH$
\begin{equation}\label{eqn:WEq}
\Re_B(f^t_{a/b}\mS_k)\cong \Tot_{(N)}f^\Delta_{2a/2b}\MU^{\wedge *+1}
\end{equation}
for $a\le b\le N+1$, compatible with respect to change in $a$ and $b$. In addition, as we have replaced $f_{2a/2b}^\Delta\MU^{\wedge *+1}$ with a fibrant model, the Tot-tower for $f^\Delta_{2a/2b}\MU^{\wedge *+1}$ is a tower of fibrations and
\[
\Tot f^\Delta_{2a/2b}\MU^{\wedge *+1}\cong \lim_N\Tot_{(N)}f^\Delta_{2a/2b}\MU^{\wedge *+1}
\cong \holim_N\Tot_{(N)}f^\Delta_{2a/2b}\MU^{\wedge *+1}
\]

Using the isomorphism \eqref{eqn:WEq} gives us the isomorphism in 
$\SH$
\[
\Re_B(f^t_{a/b}\mS_k)\cong \Tot f^\Delta_{2a/2b}\MU^{\wedge *+1},
\]
and since  the functor $\Tot$ is compatible with homotopy fiber sequences, we have isomorphisms in $\SH$
\[
\Tot f^\Delta_{2a/2b}\MU^{\wedge *+1}\cong \hocofib\left( \Tot f^\Delta_{2b}\MU^{\wedge *+1}\to \Tot f^\Delta_{2a}\MU^{\wedge *+1}\right)
\]
for all $a\le b$.  Thus, we have an isomorphism of the spectral sequence associated to the tower 
\[
\ldots\to \Re_B(f^t_{n+1}\mS_k)\to \Re_B(f^t_{n}\mS_k)\to \ldots\to \Re_B(f^t_{0}\mS_k)\cong \mS
\]
and the one associated to the tower 
\begin{multline*}
\ldots\to \Tot f^\Delta_{2n+2}\MU^{\wedge *+1}\to \Tot f^\Delta_{2n}\MU^{\wedge *+1}\to\ldots\\
\to \Tot f^\Delta_{0}\MU^{\wedge *+1}=\Tot\,\MU^{\wedge *+1}.
\end{multline*}
Since all the odd homotopy groups of $\MU^{\wedge m+1}$  vanish,  this latter  spectral sequence is just $E(\Dec, \MU^{\wedge *+1})$,  after reindexing.

By \cite[proposition 6.4]{LevineComp}, the functor $\Re_B$ induces an isomorphism
\[
 \pi_{n,0}(s^t_m\mS_k)(\Spec k)
\cong \pi_n(\Re_B(s^t_m\mS_k))
\]
for all $n$ and $m$. In addition,  the tower
\[
\ldots\to f_{m+1}^t\mS_k\to f_{m}^t\mS_k\to \ldots\to f_{0}^t\mS_k=\mS_k
\]
and its Betti realization
\[
\ldots\to \Re_Bf_{m+1}^t\mS_k\to \Re_B f_{m}^t\mS_k\to \ldots\to \Re_Bf_{0}^t\mS_k=\mS
\]
yield strongly convergent spectral sequences
\begin{align*}
&E_1^{p,q}=\pi_{-p-q,0}(s^t_p\mS_k)(\Spec k)\Longrightarrow \pi_{-p-q,0}(f^t_{a/b}\mS_k)(\Spec k)
\\
&E_1^{p,q}=\pi_{-p-q}\Re_B(s^t_p\mS_k)\Longrightarrow \pi_{-p-q}\Re_B(f^t_{a/b}\mS_k)
\end{align*}
(see \cite[theorem 4]{LevineConv}, \cite[proof of theorem 6.7]{LevineComp})
and thus   the functor $\Re_B$ induces an isomorphism
\[
 \pi_{n,0}(f^t_{a/b}\mS_k)(\Spec k)
\cong \pi_n(\Re_B(f^t_{a/b}\mS_k))
\]
for all $n$ and all $a<b\le \infty$. 

Putting these two pieces together, the Betti realization functor gives an isomorphism  of the spectral sequence $E(AH)$ with the spectral sequence $E(\Dec, \MU^{\wedge *+1})$, after a suitable reindexing. Explicitly, this gives 
\[
E_1^{p, q}(AH)\cong E_1^{2p,q-p}(\Dec, \MU^{\wedge *+1})=E_2^{2p,q-p}(\Dec, \MU^{\wedge *+1});
\]
the terms $E_*^{p,q}(\Dec, \MU^{\wedge *+1})$ with $p$ odd are all zero, and by induction, we have  isomorphisms
\[
E_r^{p, q}(AH)\cong E_{2r}^{2p,q-p}(\Dec, \MU^{\wedge *+1})
\]
commuting with  the differentials $d_r(AH)$ and $d_{2r}(\Dec)$.  We apply proposition~\ref{prop:Dec} to yield the isomorphism
\[
\gamma_r^{p,q}:E_r^{p,q}(\Dec,  \MU^{\wedge *+1})\to E_{r+1}^{2p+q,-p}( \MU^{\wedge *+1})=E_{r+1}^{2p+q, p}(AN).
\]
(The change in indices in the last identity results from passing from the Cartan-Eilenberg indexing convention to that of Bousfield-Kan). This completes the proof.
 \end{proof}
 
 \begin{rems} 1. Fixing a prime $\ell$, one can replace $\MU$ with the $\ell$-local Brown-Peterson spectrum $\BP$, and similarly replace $\MGL$ with the motivic counterpart $\BP_{\mot}$ to $\BP$ (see e.g. \cite{Vezzosi} for a construction). Having made this substitution, repeating the above argument gives a comparison of the $\ell$-local Adams-Novikov spectral sequence with the $\ell$-localized slice spectral sequence. Indeed, the only point one needs to check is that the unit map $\mS_k\otimes \ZZ_{(\ell)}\to \BP_{\mot}$ has cofiber in $\Sigma_T\SH^\eff(k)$. For this, we note that $\BP_{\mot}$  is a summand of $\MGL\otimes \ZZ_{(\ell)}$, hence $\BP_{\mot}$ is in $\SH^\eff(k)$. Thus, we need only see that the unit map induces an isomorphism on the 0th slice, $s^t_0\mS_k\otimes \ZZ_{(\ell)}\to s^t_0\BP_{\mot}$. By Spitzweck's computation of the slices of Landweber exact theories \cite{Spitzweck}, we see that the canonical map $\MGL\to \BP_{\mot}$ induces an isomorphism $s^t_0\MGL\otimes\ZZ_{(\ell)}\to s^t_0\BP_{\mot}$; as we have already seen that $s^t_0\mS_k \to s^t_0\MGL$ is an isomorphism, the point is checked. This yields theorem~\ref{thm:MainPLocal}.\\\\
 2. Presumably the main results presented here have an analog in \'etale homotopy theory with respect to a suitable \'etale realization functor, see for example \cite{Isaksen}. Using the \'etale theory should enable an extension of these results to fields of positive characteristic, at least for the prime-to-characteristic parts of the groups concerned, but we have not checked this.
 \end{rems}
 
 \section{Constructions in functor categories} \label{sec:Diagram} It is convenient to perform constructions, such as Postnikov towers in various settings, or realization functors, in functor categories. This can be accomplished in a number of ways.  The Postnikov towers may be constructed via cofibrant replacements associated to a right Bousfield localization; by making the cofibrant replacement functorial, this extends immediately to functor categories. The Betti realization is similarly accomplished as the left derived functor of a left Quillen functor, so again, applying this functor to a functorial cofibrant replacement extends the Betti realization to a realization functor between functor categories. However, it is often useful to have more control over these constructions, which can be achieved through a full extension to the appropriate model category structure on the functor category; we give some details of this approach here. None of this basic material is new; it is assembled from \cite{Barwick, Hirschhorn, Hovey} and collected here for the reader's convenience. The applications to the slice tower and Betti realization are new.

\subsection{Model structures on functor categories}\label{subsec:ModelStructure}
Let $\sS, \sT$ be  small categories, $\sM$ a complete and cocomplete category, $\sM^\sS$ the category of functors $\sX:\sS\to\sM$. For $f:\sT\to \sS$ a functor, we have the restriction functor $f_*:\sM^\sS\to \sM^\sT$, $f_*\sX:=\sX\circ f$, with left adjoint $f^*$ and right adjoint $f^!$. For $\sX\in \sM^\sT$, we have that $f^*\sX$ and $f^!\sX$ are respectively  the left and right Kan extension in the following diagram:
\[\xymatrix{
\sT\ar[r]^\sX\ar[d]_f&\sM\\\sS}
\]
In particular, for   $s\in \sS$,  we have $i_s:\pt\to \sS$,  the inclusion functor with value $s$, inducing the evaluation functor $i_{s*}:\sM^\sS\to \sM$, the left adjoint $i_s^*:\sM\to\sM^\sS$, and the right adjoint  $i_s^!:\sM\to \sM^\sS$.

We take  $\sM$ to be a simplicial model category and consider two model structures on $\sM^\sS$.  If $\sM$ is cofibrantly generated, we may give $\sM^\sS$ the projective model structure, that is, weak equivalences and fibrations are defined pointwise, and cofibrations are characterized by having the left lifting property with respect to trivial fibrations.

In case $\sS$ is a Reedy category, one can also give $\sM^\sS$ the Reedy model structure. We first recall the definition of a Reedy category $\sS$: There is an ordinal $\lambda$,  a function (called {\em degree}) $d:\text{Obj}\sS\to \lambda$ and two subcategories $\sS_+$, $\sS_-$, such that all nonidentity morphisms in $\sS_+$ increase the degree, all nonidentity morphisms in $\sS_-$ decrease the degree, and each morphism $f$ in $\sS$ admits a unique factorization $f=a\circ b$ with $a\in \sS_+$, $b\in \sS_-$.  For $s\in \sS$, we let $\sS_-^s$ be the category of nonidentity morphisms $s\to t$ in $\sS_-$, and let $\sS_+^s$ be the category of nonidentity morphisms $t\to s$ in $\sS_+$.  Given an object $\sX\in \sM^\sS$, and $s\in \sS$, we have the {\em latching space} $L^s\sX$ and {\em matching space} $M^s\sX$: 
\[
L^s\sX:=\colim_{t\to s\in \sS_+^s}\sX(t),\ M^s\sX:=\lim_{s\to t\in \sS_-^s}\sX(t), 
\]
with the canonical morphisms $L^s\sX\to \sX(s)$, $\sX(s)\to M^s\sX$. \footnote{Note the shift in indices from the notation in \cite[Chap. X, \S 4]{BK}.}

The Reedy model structure on $\sM^\sS$ has weak equivalences the maps $f:\sX\to \sY$ such that $f(s):\sX(s)\to \sY(s)$ is a weak equivalence in $\sM$ for all $s\in \sS$, fibrations the maps $f:\sX\to \sY$ such that $\sX(s)\to \sY(s)\times_{M^s\sY}M^s\sX$ is a fibration in $\sM$ for all $s\in \sS$ and cofibrations the maps $f:\sX\to \sY$ such that $\sX(s)\amalg_{L^s\sX}L^s\sY\to \sY(s)$ is a cofibration for all $s\in \sS$. This makes $\sM^\sS$ a model category without any additional conditions on $\sM$. 

 In both of these two model structures, the evaluation functor $i_{s*}$ preserves fibrations, cofibrations and weak equivalences, and admits $i_s^*$ as left Quillen functor and $i_s^!$ as right Quillen functor. 

\begin{rem}\label{rem:CellularProper}
Suppose $\sM$ is cofibrantly generated.  If $\sS$ is a direct category, these two model structures agree; if $\sS$ is a general Reedy category,  the weak equivalences in the two model structures agree,  every fibration for the Reedy model structure is a fibration in the projective model structure, and thus every cofibration in the projective model structure is a cofibration in the Reedy model structure. Furthermore, the projective model structure is also cofibrantly generated, and is cellular, resp. combinatorial, if $\sM$ is cellular, resp. combinatorial; we refer the reader to \cite[theorem 11.6.1, theorem 12.1.5]{Hirschhorn}, \cite[theorem 2.14]{Barwick} for proofs of these assertions.  The Reedy model structure likewise inherits the combinatorial property from $\sM$ \cite[lemma 3.33]{Barwick}.

Left and right properness are similarly passed on from $\sM$ to the projective model structure on $\sM^\sS$ \cite[proposition 2.18]{Barwick}. For the Reedy model structure, the inheritance of left and right properness is proven in \cite[lemma 3.24]{Barwick}.
\end{rem}

\begin{ex} The classical example of a Reedy category is the category of finite ordered sets.
Let $\Delta$ denote the category with objects the finite ordered sets $[n]:=\{0,\ldots, n\}$ with the standard order, $n=0, 1, \ldots$, and with morphisms the order preserving maps of sets.  For a category $\sC$ the functor category $\sC^\Delta$ (resp. $\sC^{\Delta^\op}$) is called as usual the category of cosimplicial (resp. simplicial) objects in $\sC$.

We let $\Delta_{inj}$, $\Delta_{surj}$ denote the subcategories of $\Delta$ with the same objects, and with morphisms the injective, resp. surjective order preserving maps. Taking $\Delta_+:=\Delta_{inj}$, $\Delta_-:=\Delta_{surj}$ and $d:\Delta\to \N$ the function $d([n])=n$ makes $\Delta$ a Reedy category. We have the standard coface maps $d_j:[n]\to [n+1]$, $j=0,\ldots, n+1$, and codegeneracy maps $s_i:[n]\to [n-1]$, $i=0,\ldots, n-1$.

 Let $\Spc$ denote the category of simplicial sets, $\Spc_\bullet$ the category of pointed simplicial sets, each with the standard model structures, see \cite[\S 3.2]{Hovey}.  Note that this is not the Reedy model structure!

Let  $\Delta[n]$ be the representable simplicial set, 
$\Delta[n]:=\Hom_{\Delta}(-,[n])$ and let $\Delta[*]:\Delta\to \Spc$ be the cosimplicial space $n\mapsto \Delta[n]$.
\end{ex}

\subsection{Simplicial structure} We consider a small category $\sS$ and a simplicial model category $\sM$ satisfying the conditions discussed in the previous section. Both model structures for $\sM^\sS$ discussed above yield simplicial model categories: for a simplicial set $A$ and a functor $\sX:\sS\to \sM$, the product $\sX\otimes A$ and Hom-object $\sHom(A, \sX)$ are the evident functors $(\sX\otimes A)(s):=\sX(s)\otimes A$ and $\sHom(A,\sX)(s):=\sHom(A, \sX(s))$. The simplicial Hom-object $\Map_{\sM^\sS}(\sX,\sY)$ is given as the simplicial set 
\[
n\mapsto \Hom_{\sM^\sS}(\sX\otimes\Delta[n], \sY), 
\]
or equivalently, as the equalizer
\[
\Map_{\sM^\sS}(\sX,\sY)\to \prod_{s\in \sS}\Map_{\sM}(\sX(s), \sY(s))\xymatrix{\ar@<3pt>[r]^{\prod g^*}\ar@<-3pt>[r]_{\prod g_*}&}\prod_{g:s\to s'}\Map_{\sM}(\sX(s), \sY(s')).
\]
Together with the evident adjunction $\Hom(\sX, \sHom(A, \sY))\cong \Hom(\sX\otimes A, \sY)$, this makes $\sM^\sS$ into a simplicial model category (see below).

We have as well the object $\sHom(\sA, X)$ in $\sM^\sS$ for $\sA\in \Spc^{\sS^\op}$, $X\in \sM$, with $\sHom(\sA, X)(s):=\sHom(\sA(s), X)$ and the object $X\otimes\sA$  in $\sM^\sS$ for $\sA\in \Spc^{\sS}$, $X\in \sM$, with $(X\otimes\sA)(s):=X\otimes\sA(s)$.

For $\sA\in \Spc^\sS$, $\sX\in \sM^\sS$, we have  $\sHom^\sS(\sA,\sX)$ in $\sM$ defined as the  equalizer
\[
\sHom^\sS(\sA,\sX)\to \prod_{s\in \sS}\sHom(\sA(s), \sX(s))\xymatrix{\ar@<3pt>[r]^{\prod g^*}\ar@<-3pt>[r]_{\prod g_*}&}\prod_{g:s\to s'}\sHom(\sA(s), \sX(s')).
\]
Similarly, for $\sA\in \Spc^{\sS^\op}$, $\sX\in \sM^\sS$, we have   $\sX\otimes^\sS\sA$ in $\sM$, defined as the coequalizer
\[
\amalg_{g:s'\to s}\sX(s')\otimes \sA(s)\xymatrixcolsep{55pt}\xymatrix{\ar@<3pt>[r]^{\prod\sX(g)\otimes\id}\ar@<-3pt>[r]_{\prod\id\otimes \sA(g)}&}
\amalg_{s\in \sS}\sX(s)\otimes \sA(s)\to \sX\otimes^\sS \sA.
\]
 
Besides the adjunction already mentioned, one has the adjunctions
\[
\Hom_\sM(X, \sHom^\sS(\sA,\sY))\cong \Hom_{\sM^\sS}(X\otimes \sA, \sY)
\]
for $X\in \sM$, $\sA\in \Spc^\sS$, $\sY\in \sM^\sS$, 
and 
\[
\Hom_{\sM^\sS}(\sX, \sHom(\sA,Y))\cong \Hom_{\sM}(\sX\otimes^\sS \sA, Y)
\]
for $\sX\in \sM^\sS$, $\sA\in \Spc^{\sS^\op}$, $Y\in \sM$. These all follow directly from the adjunctions for $\sHom$ and $\otimes$. 

Both adjunctions are Quillen adjunctions of two variables. In case $\sM$ is cofibrantly generated and we use the projective model structure, this is \cite[theorem 11.7.3]{Hirschhorn}; if $\sS$ is a Reedy category and we give $\sM^\sS$ the Reedy model structure, this is \cite[lemma 3.24]{Barwick}.
This gives $\sM^\sS$ the structure of a $\Spc^\sS$ model category and a $\Spc^{\sS^\op}$ model category.

\subsection{Monoidal structure}
We now suppose that $\sM$ has a symmetric monoidal structure $\otimes_\sM$, making $\sM$ into a closed symmetric monoidal simplicial model category, with internal Hom $\sHom_\sM(-,-)$.

For $X\in \sM$, $\sY\in \sM^\sS$, we have $X\otimes_\sM \sY$ and $\sHom_\sM(X, \sY)$  in $\sM^\sS$, defined objectwise, with the adjunction, for $\sY, \sZ\in \sM^\sS$, $X\in\sM$,
\[
\Hom_{\sM^\sS}(X\otimes_\sM \sY, \sZ)\cong \Hom_{\sM^\sS}(\sY ,\sHom_\sM(X, \sZ))
\]
This extends to the adjunction on mapping spaces
\[
\Map_{\sM^\sS}(X\otimes_\sM \sY, \sZ)\cong \Map_{\sM^\sS}(\sY ,\sHom_\sM(X, \sZ)).
\]

We define the $\sM$-valued internal Hom
\[
\sHom_\sM^\sS:(\sM^\sS)^\op\times\sM^\sS\to \sM
\]
as the equalizer
\[
\sHom_\sM^\sS(\sX, \sY)\to \prod_{s\in \sS}\sHom_\sM(\sX(s), \sY(s)) \xymatrix{\ar@<3pt>[r]^{\prod g^*}\ar@<-3pt>[r]_{\prod g_*}&}\prod_{g:s\to s'}\sHom_\sM(\sX(s), \sY(s')).
\]
Similarly, for $\sX\in \sM^{\sS}$, $\sY\in \sM^{\sS^\op}$, we have   $\sX\otimes^\sS_\sM\sY$ in $\sM$, defined as the coequalizer
\[
\amalg_{g:s'\to s}\sX(s')\otimes_\sM \sY(s)\xymatrixcolsep{55pt}\xymatrix{\ar@<3pt>[r]^{\prod\sX(g)\otimes\id}\ar@<-3pt>[r]_{\prod\id\otimes \sY(g)}&}
\amalg_{s\in \sS}\sX(s)\otimes_\sM \sY(s)\to \sX\otimes^\sS_\sM \sA.
\]

We have the adjunctions, for $\sA\in \Spc^\sS$, $\sY\in \sM^\sS$, $X\in \sM$,
\[
\sHom^\sS(\sA,\sHom_\sM(X, \sY))\cong \sHom^\sS_\sM(X\otimes\sA, \sY)\cong \sHom_\sM(X, \sHom^\sS(\sA, \sY)),
\]
induced by the adjunctions 
\[
\sHom(A,\sHom_\sM(X, Y))\cong \sHom_\sM(X\otimes A, Y)\cong \sHom_\sM(X, \sHom(A, Y))
\]
for $X, Y\in\sM$, $A\in \Spc$. Analogous constructions and statements hold in the pointed setting.

\begin{lem}\label{lem:Adj2} Give $\sM^\sS$ either the Reedy model structure or, in case $\sM$ is cofibrantly generated,  the projective model structure. Then the operations $\otimes_\sM$ and $\sHom_\sM^\sS$ form a Quillen adjunction of two variables, that is, these make $\sM^\sS$ into an $\sM$-model category.
\end{lem}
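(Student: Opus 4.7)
The plan is to reduce the claim to the pushout–product axiom (SM7) for the bifunctor $\otimes_\sM \colon \sM \times \sM^\sS \to \sM^\sS$, and to verify this axiom separately for the projective and Reedy model structures. Concretely, it suffices to check: given a cofibration $f\colon X\to X'$ in $\sM$ and a fibration $p\colon \sZ\to\sZ'$ in $\sM^\sS$, the pullback-hom
\[
f^\square p \colon \sHom_\sM(X',\sZ) \longrightarrow \sHom_\sM(X,\sZ)\times_{\sHom_\sM(X,\sZ')} \sHom_\sM(X',\sZ')
\]
is a fibration in $\sM^\sS$, acyclic if either $f$ or $p$ is acyclic. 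The standard adjunctions assembled above (together with the ones in the pointed and unpointed simplicial setting already proved to be Quillen adjunctions in two variables) translate this into SM7, so this is the right reformulation.

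For the projective structure on $\sM^\sS$, everything is visible pointwise. A map in $\sM^\sS$ is a (trivial) projective fibration if and only if its evaluation at every $s\in \sS$ is a (trivial) fibration in $\sM$. Since $\sHom_\sM(X,\sZ)(s)=\sHom_\sM(X,\sZ(s))$ and fibered products of functors are computed objectwise, the map $f^\square p$ evaluated at $s$ is exactly the pullback-hom in $\sM$ for the cofibration $f$ and the fibration $p(s)$. Because $\sM$ is itself a symmetric monoidal model category, this is a fibration in $\sM$, acyclic whenever $f$ or $p(s)$ is. So the projective case follows directly from the SM7 axiom in $\sM$ applied termwise.

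For the Reedy structure, the characterization of fibrations is more involved: $p\colon \sZ\to\sZ'$ is a Reedy fibration iff the relative matching maps $\sZ(s)\to \sZ'(s)\times_{M^s\sZ'}M^s\sZ$ are fibrations in $\sM$ for every $s$, and similarly for trivial fibrations. The key observation is that for fixed $X\in\sM$, the functor $\sHom_\sM(X,-)\colon \sM\to\sM$ is a right adjoint (with left adjoint $X\otimes_\sM -$) and therefore preserves all limits, in particular the limits defining the matching objects $M^s$. Consequently
\[
M^s\sHom_\sM(X,\sZ)\cong \sHom_\sM(X,M^s\sZ),
\]
naturally in $X$ and $\sZ$. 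Thus the relative matching map for $f^\square p$ at the object $s$ is canonically identified with the pullback-hom in $\sM$ of the cofibration $f$ with the relative matching fibration for $p$ at $s$. Once again, the SM7 axiom in $\sM$ supplies the required (trivial) fibration in $\sM$, so $f^\square p$ is a Reedy (trivial) fibration.

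The main bookkeeping obstacle is the Reedy case: one must check carefully that forming matching objects commutes with $\sHom_\sM(X,-)$, and that the identification respects the maps defining the relative matching. Once this compatibility is made precise (a formal consequence of $X\otimes_\sM -\dashv \sHom_\sM(X,-)$ together with the objectwise definition of $\sHom_\sM$), both cases collapse to SM7 in $\sM$. A closing remark should note that the $\sM$-model structure axioms beyond SM7 — namely compatibility of the unit of $\otimes_\sM$ with cofibrant replacement — are automatic here because the unit in $\sM$ is assumed to be cofibrant (or satisfies the usual monoid-unit convention) and $\otimes_\sM$ acts objectwise on $\sM^\sS$.
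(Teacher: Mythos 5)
Your argument is correct, and it is essentially the argument the paper delegates to its references: the paper gives no proof here beyond citing Hirschhorn (theorem 11.7.3) for the projective case and Barwick (lemma 3.36) for the Reedy case, and what you have written is precisely the content of those citations — pointwise reduction to SM7 in $\sM$ for the projective structure, and for the Reedy structure the identification $M^s\sHom_\sM(X,\sZ)\cong\sHom_\sM(X,M^s\sZ)$ (valid because $\sHom_\sM(X,-)$ is a right adjoint and matching objects are limits computed objectwise) together with the standard rearrangement of limits identifying the relative matching map of $f^{\square}p$ at $s$ with the pullback-hom of $f$ against the relative matching map of $p$ at $s$. The one imprecise point is your closing remark on the unit axiom: the justification is not that the unit of $\sM$ is cofibrant (the paper makes no such assumption), but that projective and Reedy cofibrant objects of $\sM^\sS$ are objectwise cofibrant, so the unit axiom for the $\sM$-action on $\sM^\sS$ follows objectwise from the unit axiom already holding in $\sM$.
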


\begin{proof}  For the projective model structure, the proof of \cite[theorem 11.7.3]{Hirschhorn} extends word for word to prove the result; the case of the Reedy model structure is proven in \cite[lemma 3.36]{Barwick}
\end{proof}

$\sM^\sS$ is a closed symmetric monoidal category, with $(\sA\otimes_{\sM^\sS}\sB)(s):=\sA(s)\otimes_\sM\sB(s)$ for $\sA, \sB\in \sM^\sS$. The internal Hom is given as 
\[
\sHom_{\sM^\sS}(\sA, \sB)(s) :=\sHom^{s/\sS}(s/\sA, s/\sB)
\]
where $s/\sA\in \sM^{s/\sS}$ is the functor $s/\sA(s\to t):=\sA(t)$; for $f:s\to s'$, the induced map $\sHom_{\sM^\sS}(\sA,\sB)(s)\to \sHom_{\sM^\sS}(\sA,\sB)(s')$ is the map 
\[
\sHom^{s/\sS}(s/\sA, s/\sB)\to \sHom^{s'/\sS}(s'/\sA, s'/\sB)
\]
induced by the functor $f^*:s/\sS\to s'/\sS$, noting that $(s'/\sA)\circ f^*=s/\sA$. The unit is the constant functor with value the unit in $\sM$. 

\begin{rem} The question of when this gives $\sM^\sS$ the structure of a symmetric monoidal model category does not appear to have a simple answer, and we will not need this structure here. In the case of the Reedy model structure, Barwick proves the following result:

\begin{prop}[\hbox{\cite[theorem 3.51]{Barwick}}] Let $\sS$ be a Reedy category and give $\sM^\sS$ the Reedy model structure. Suppose that either\\
a. all morphisms in $\sS_-$ are epimorphisms and for each $s\in \sS$ the category $\sS_-^s$ is either empty or connected \\
or the dual\\
b.  all morphisms in $\sS_+$ are monomorphisms and for each $s\in \sS$, the category $\sS_+^s$ is either empty or  connected.\\
Then $\otimes_{\sM^\sS}$ and $\sHom_{\sM^\sS}(-,-)$ is a Quillen adjunction of two variables, making $\sM^\sS$ a symmetric monoidal model category.
\end{prop}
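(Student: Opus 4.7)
The plan is to verify the two remaining axioms for a symmetric monoidal model category—the pushout-product axiom and the unit axiom—assuming all the other work (closed symmetric monoidal structure on $\sM^\sS$, Reedy model structure, existence of $\sHom_{\sM^\sS}$) has already been put in place as above. First, I would observe that the hypotheses (a) and (b) are exchanged under the duality $\sS \mapsto \sS^\op$, which identifies the Reedy model structure on $\sM^\sS$ with that on $\sM^{\sS^\op}$ and preserves the monoidal structure. This reduces everything to the case (b).

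Next, I would recall the characterization of Reedy (trivial) cofibrations through the relative latching maps: a morphism $f\colon \sA \to \sB$ is a (trivial) Reedy cofibration if and only if, for every $s \in \sS$, the relative latching map
\[
\sA(s) \amalg_{L^s\sA} L^s\sB \longrightarrow \sB(s)
\]
is a (trivial) cofibration in $\sM$. Given two Reedy cofibrations $f\colon \sA \to \sB$ and $g\colon \sC \to \sD$, the goal is to show that the relative latching map at $s$ of the pushout-product $f\,\square\,g$ in $\sM^\sS$ is a cofibration in $\sM$ (and trivial if $f$ or $g$ is). By the pushout-product axiom in $\sM$ itself, it suffices to identify this relative latching map with an iterated pushout-product built out of the relative latching maps of $f$ and $g$ at $s$.

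The technical heart of the proof is therefore a compatibility statement: under hypothesis (b), for any $\sA, \sB \in \sM^\sS$ and any $s \in \sS$, the latching construction is compatible with $\otimes_\sM$ in the sense that one can compute $L^s(\sA \otimes_{\sM^\sS} \sB)$ and its canonical map to $\sA(s) \otimes_\sM \sB(s)$ from $L^s\sA \to \sA(s)$ and $L^s\sB \to \sB(s)$ via the pushout-product operation. The monomorphism hypothesis guarantees that $\sS_+^s$ embeds into $\sS$ as a well-behaved subcategory (so that the colimit computing $L^s$ is a subobject of the value), while the connectedness hypothesis on $\sS_+^s$ lets one rewrite the diagonal colimit $\colim_{t \to s} \sA(t)\otimes_\sM \sB(t)$ as a pushout along latching objects. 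Running this through for the pushout-product $f\,\square\,g$ and invoking the pushout-product axiom in $\sM$ then yields the desired conclusion; the trivial-cofibration case follows by the same argument, using that $\sM$-trivial cofibrations satisfy the pushout-product axiom.

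The main obstacle will be the combinatorial identification of $L^s(\sA \otimes_{\sM^\sS} \sB)$ under hypothesis (b): without the connectedness of $\sS_+^s$ one would only obtain a map from the expected pushout into $L^s(\sA \otimes \sB)$ rather than an isomorphism, and without the monomorphism condition one cannot guarantee that this map behaves well on subobjects. The unit axiom is then essentially automatic, since the unit of $\sM^\sS$ is the constant functor at the unit of $\sM$, whose latching spaces are either empty or isomorphic to the unit (by connectedness of $\sS_+^s$), so the argument reduces to the unit axiom in $\sM$ itself. The dual hypothesis (a) is handled by the identical argument performed on matching objects via the internal Hom $\sHom_{\sM^\sS}$, which is of course equivalent to the $\sS^\op$ case already treated.
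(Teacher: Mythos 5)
The paper offers no proof of this proposition at all --- it is quoted directly from Barwick's theorem 3.51 --- so there is nothing internal to compare your argument against; it has to stand on its own. It does not: the technical heart of your proposal is false as stated. You claim that under hypothesis (b) the latching map $L^s(\sA\otimes_{\sM^\sS}\sB)\to \sA(s)\otimes_\sM\sB(s)$ is the pushout-product of the latching maps of $\sA$ and $\sB$ at $s$, equivalently that the diagonal colimit $\colim_{t\to s}\sA(t)\otimes_\sM\sB(t)$ can be rewritten as the pushout $\sA(s)\otimes_\sM L^s\sB\amalg_{L^s\sA\otimes_\sM L^s\sB}L^s\sA\otimes_\sM\sB(s)$. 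This already fails for the simplest direct category satisfying (b): take $\sS=\{t\to s\}$ with $\sS=\sS_+$, $d(t)=0<d(s)=1$, so that $\sS_+^s$ is a single object (hence connected) and the unique nonidentity arrow is trivially a monomorphism. Then $L^s(\sA\otimes\sB)=\sA(t)\otimes\sB(t)$, whereas the source of the pushout-product is $\sA(s)\otimes\sB(t)\amalg_{\sA(t)\otimes\sB(t)}\sA(t)\otimes\sB(s)$; for $\sM=\Sets$ with cartesian product, $\sA(t)=\sB(t)=\pt$ and $\sA(s)=\sB(s)=\{0,1\}$, these have one and three elements respectively, so the canonical comparison map is not an isomorphism. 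The discrepancy persists verbatim for the relative latching map of $f\,\square\,g$ (take $f,g$ with empty source). Connectedness of $\sS_+^s$ only buys you that the colimit of a \emph{constant} diagram over $\sS_+^s$ is its value --- i.e., cofibrant constants, which is exactly what your (correct) treatment of the unit axiom uses --- it does not let you rewrite a diagonal colimit as a pushout of corner colimits.

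Note that in this example the pushout-product axiom is nonetheless true, but for a different reason: $\sA(t)\otimes\sB(t)\to\sA(s)\otimes\sB(s)$ is a cofibration because it factors as $\sA(t)\otimes\sB(t)\to\sA(s)\otimes\sB(t)\to\sA(s)\otimes\sB(s)$ and Reedy cofibrancy forces the lower-degree values $\sA(t),\sB(t)$ to be cofibrant. That is the shape the genuine argument must take: an induction over the degree filtration (or a cell decomposition of Reedy cofibrations), in which cofibrancy of lower latching data is used at each stage, rather than a one-step identification of latching maps with pushout-products. Separately, your opening reduction of case (a) to case (b) ``by the duality $\sS\mapsto\sS^\op$'' does not work as stated: that duality identifies $(\sM^\sS)^\op$ with $(\sM^\op)^{\sS^\op}$, exchanging latching with matching objects and cofibrations with fibrations, and the pushout-product axiom is not carried to itself under it. Case (a) must be run through the adjoint pullback-hom formulation using matching objects of $\sHom_{\sM^\sS}$ --- which are themselves nontrivial to compute from the end formula --- as you acknowledge only in your closing sentence.
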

The condition (a) is satisfied for $\sS=\Delta$ and  the dual (b) is satisfied for $\sS=\Delta^\op$, so the categories of cosimplicial or simplicial objects in a symmetric monoidal model category have the structure of a symmetric monoidal model category. As another example of an $\sS$ satisfying (a), one can take for $\sS$ the category associated to a finite poset having a final object, with Reedy structure $\sS=\sS_-$; a finite poset with initial object similarly satisfies (b) if one takes $\sS=\sS_+$.
\end{rem}

\subsection{Bousfield localization}

We suppose that $\sM$ is cellular and right proper. Let $K$ be a set of cofibrant objects in $\sM$. We have the right Bousfield localization $R_K\sM$ with associated functorial cofibrant replacement $Q_K\to \id$ (see \cite[theorem 5.1.1]{Hirschhorn}). Let $K^\sS$ be the set of cofibrant objects $i_s^*a$, $a\in K$, $s\in \sS$, and let $R_{K^\sS}\sM^\sS$ be the right Bousfield localization of $\sM^\sS$ with respect to $K^\sS$  (as noted in remark~\ref{rem:CellularProper}, $\sM^\sS$ inherits cellularity and right properness from $\sM$).

\begin{lem} \label{lem:FunctBous} Suppose that $\sM$ is cellular and right proper, and give $\sM^\sS$ the projective model structure. Let $K$ be a set of cofibrant objects in $\sM$. \\
1. The right Bousfield localization  $R_{K^\sS}\sM^\sS$ is the same as the projective model structure on $(R_K\sM)^\sS$.\\
2.  Take $x\in \sM^\sS$ and let $Qx\to x$ be a cofibrant replacement in $R_{K^\sS}\sM^\sS$. Then $i_{s*}Qx\to i_{s*}x$ is a cofibrant replacement of $i_{s*}x$ in $R_K\sM$ for all $s\in \sS$.
\end{lem}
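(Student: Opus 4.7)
My plan is to establish (1) by showing that $R_{K^\sS}\sM^\sS$ and the projective model structure on $(R_K\sM)^\sS$ have the same fibrations and the same weak equivalences; equality of cofibrations then follows automatically by the characterization via left lifting against trivial fibrations. Both model structures have the pointwise fibrations in $\sM$ as their fibrations: the right Bousfield localization $R_{K^\sS}\sM^\sS$ preserves the fibrations of the projective structure on $\sM^\sS$, and the projective structure on $(R_K\sM)^\sS$ is defined by pointwise fibrations in $R_K\sM$, which coincide with pointwise fibrations in $\sM$.

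The crux is the comparison of weak equivalences. A map $f\:x\to y$ in $\sM^\sS$ is a $K^\sS$-colocal equivalence iff, for every $c\in K^\sS$, the induced map $\Map_{\sM^\sS}(c,\hat f)$ is a weak equivalence of simplicial sets (after a suitable cosimplicial/fibrant resolution). Since $K^\sS=\{i_s^*a : a\in K,\, s\in\sS\}$, the Quillen adjunction $i_s^*\dashv i_{s*}$ yields $\Map_{\sM^\sS}(i_s^*a,\hat f)\simeq \Map_\sM(a,i_{s*}\hat f)$. Hence $f$ is $K^\sS$-colocal iff $i_{s*}f$ is $K$-colocal for every $s\in\sS$, which is precisely the definition of a pointwise weak equivalence in $(R_K\sM)^\sS$ with the projective structure. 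So the two model structures coincide.

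For (2), by (1) I may regard $Qx\to x$ as a projective cofibrant replacement inside $(R_K\sM)^\sS$. Two standard facts then suffice. First, projective weak equivalences are pointwise weak equivalences, so $i_{s*}Qx\to i_{s*}x$ is a weak equivalence in $R_K\sM$. Second, projectively cofibrant objects are pointwise cofibrant: the generating projective cofibrations have the form $i_t^*(f)$ with $f$ a generating cofibration of $R_K\sM$; evaluating $i_t^*(f)$ at $s$ gives a coproduct of copies of $f$, which is a cofibration in $R_K\sM$; and cofibrancy is preserved under the coproducts, pushouts, transfinite compositions and retracts that build up arbitrary projective cofibrations. Hence $i_{s*}Qx$ is cofibrant in $R_K\sM$, giving the desired cofibrant replacement.

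The main technical point to verify carefully is the adjunction-based characterization of $K^\sS$-colocal equivalences in (1); this requires using simplicial (or cosimplicial) resolutions so that $\Map_{\sM^\sS}$ computes the derived mapping space, and noting that $i_s^*$ sends the chosen resolutions for $a$ to admissible resolutions for $i_s^*a$, while $i_{s*}$ preserves fibrancy in both the unlocalized projective structure and its right Bousfield localization. Once this is in place, the remaining steps are formal, and part (2) is essentially a corollary of (1) combined with the standard properties of the projective structure.
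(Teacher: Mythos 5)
Your proof is correct and follows essentially the same route as the paper: both identify the fibrations as unchanged under right Bousfield localization and then match the colocal weak equivalences pointwise, with your adjunction computation $\Map_{\sM^\sS}(i_s^*a,\hat f)\simeq \Map_\sM(a,i_{s*}\hat f)$ being exactly the step the paper leaves implicit behind ``from this it follows.'' Part (2) likewise matches, since the paper invokes the same standard fact that $i_{s*}$ preserves cofibrations, fibrations and weak equivalences in the projective structure, which you verify via generating cofibrations.
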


\begin{proof} Right Bousfield localization leaves the fibrations unchanged, hence $R_{K^\sS}\sM^\sS$   and $(R_K\sM)^\sS$ have the same fibrations. The weak equivalences in a right Bousfield localization with respect to a set of objects $K$ are the $K$-colocal weak equivalences, that is, maps $X\to Y$ that induce a weak equivalence on the Hom spaces $\sHom(a, RX)\to \sHom(a, RY)$ for all $a\in K$, where $RX$, $RY$ are fibrant replacements. From this it follows that $X\to Y$ is a weak equivalence in $R_{K^\sS}\sM^\sS$ if and only if $i_{s*}X\to i_{s*}Y$ is a weak equivalence in $R_K\sM$ for all $s$, that is, the weak equivalences in $R_{K^\sS}\sM^\sS$   and $(R_K\sM)^\sS$ agree. 

(2) follows from (1), noting that $i_{s*}$ preserves cofibrations, fibrations and weak equivalences (for the projective model structure). 
\end{proof}

\begin{exs}\label{exs:FunctorialConstructions}
1. {\em ``Topological'' Postnikov towers.} We recall a functorial construction of the $(n-1)$-connected cover $f_n\sX\to \sX$ of a pointed space. Fix an integer $n\ge0$ and let $K_n$ be the set of spaces of the form $\Sigma^mX$, with $X$ in $\Spc_\bullet$ and $m\ge n$. $\Spc_\bullet$ is a right proper cellular simplicial model category, hence by \cite[theorem 5.1.1]{Hirschhorn}, the right Bousfield localization $R_{K_n}\Spc_\bullet$ of $\Spc_\bullet$ with respect to the $K_n$-colocal maps exists. In addition, there is a cofibrant replacement functor $f_n:R_{K_n}\Spc_\bullet\to R_{K_n}\Spc_\bullet$. By the definition of right Bousfield localization (\cite[definition 3.3.1]{Hirschhorn}, see also \cite[theorem 2.5]{LevineComp})  $f_n\sX\to \sX$  in $\Ho\Spc_\bullet$ is universal for maps from $(n-1)$-connected $\sY$ to $\sX$; by obstruction theory, it follows that $f_n\sX\to \sX$ is an $(n-1)$-connected cover of $\sX$. Using lemma~\ref{lem:FunctBous}, we may form the $(n-1)$-connected cover $f_n^\sS\sX\to \sX$ in the functor category $\Spc_\bullet^\sS$ as the cofibrant replacement with respect to the right Bousfield localization $R_{K_n^\sS}\Spc_\bullet$.  

Varying $n$ and noting that $K_n\subset K_m$ if $n\ge m$ gives the tower of cofibrant replacement functors
\[
\ldots\to f_{n+1}^\sS\to f_n^\sS\to\ldots\to f_0^\sS=\id.
\]

Let $\Spt$ be the category of $S^1$-spectra in $\Spc_\bullet$, with stable model structure as defined in \cite{HoveySpectra}. We have the $n$th evaluation functor $ev_n:\Spt\to \Spc_\bullet$, $ev_n(S_0, S_1,\ldots):=S_n$, and its left adjoint $F_n:\Spc_\bullet\to \Spt$, 
\[
F_n(S):=(pt,\ldots,pt, S, \Sigma S, \Sigma^2 S,\ldots). 
\]
We repeat the construction of the Postnikov tower, with $\Spt$ replacing $\Spc_\bullet$ and taking $K_n$ to be  the set of objects $F_a\Sigma^bX$, with $X\in\Spc_\bullet$, $b-a\ge n$, $n\in \ZZ$. This gives us the Postnikov tower in the functor category $\Spt^\sS$ (with $n\in\ZZ$)
\[
\ldots\to f_{n+1}^\sS\to f_n^\sS\to\ldots\to  \id.
\]

We may extend these constructions to other model categories. Rather than attempting an axiomatic discussion, we content ourselves with the examples arising in motivic homotopy theory. Let  $S$ be a noetherian separated base scheme and let $\Spc_\bullet(S)$ be the category of pointed spaces over $S$, that is, $\Spc_\bullet$-valued presheaves on the category $\Sm/S$ of smooth $S$-schemes of finite type. We give  $\Spc_\bullet(S)$ the motivic model structure; this gives $\Spc_\bullet(S)$ the structure of a proper combinatorial symmetric monoidal simplicial model category (for details see  \cite[corollary 1.6]{Hornbostel}, \cite[\S1, theorem 1.1]{Jardine}, \cite[Appendix A]{Jardine2} and \cite[theorem 2.3.2]{Pelaez}).  Letting $K_n(S)$ be the set of objects of the form $\Sigma^m\sX$, with $\sX\in \Spc_\bullet(S)$ and $m\ge n$, we have the right Bousfield localization, $R_{K_n(S)}\Spc_\bullet(S)$ and the cofibrant replacement functor $f_n$, with  universal property for maps with source in  the $K_n(S)$-cellular objects of $\Spc_\bullet(S)$. In case $S=\Spec k$, $k$ a field, these turn out to be the $(n-1)$-connected objects in $\Spc_\bullet(S)$, that is, those objects with vanishing $\A^1$-homotopy sheaves $\pi_m$ for $m<n$ (see e.g.  \cite{PelaezUnstable}. See also \cite[theorem 3.1, remark 3.3]{LevineComp} for a discussion of the stable case and an indication of how this construction works in the unstable case).

We may also use categories of $S^1$ or $\P^1$ spectra, $\Spt_{S^1}(S)$, $\Spt_{\P^1}(S)$, with the respective motivic model structures (see \cite{Jardine2} for a description of the model structures and e.g. \cite[theorem 2.5.4]{Pelaez} for the fact that these are cellular). For $S^1$ spectra, replace $K_n$ with $K_n^{S^1}(S):=\{ F_q^{S^1}\Sigma^p\sX,  \sX\in \Spc_\bullet(S), p-q\ge n\}$. Here $F^{S^1}_q:\Spc_\bullet(S)\to \Spt_{S^1}(S)$ is given by using the functor $F_q:\Spc_\bullet\to \Spt$, that is,
\[
F^{S^1}_q(\sX)(T):=F_q(\sX(T))
\]
for each $T\to S$ in $\Sm/S$. Suppose $S=\Spec k$, with $k$ a perfect field. Again, the $K^{S^1}_n(S)$-cellular objects are those $E\in \Spt_{S^1}(S)$ with stable $\A^1$-homotopy sheaves $\pi_m E$ that vanish for $m<n$ and in this stable model category, the  subcategory 
\[
\SH_{S^1}(S)^{\le 0}:=\Ho R_{K_0^{S^1}(S)}\Spt_{S^1}(S) 
\]
of the homotopy category $\SH_{S^1}(S)$ of $\Spt_{S^1}(S)$ is half of a $t$-structure with heart the strictly $\A^1$-invariant Nisnevich sheaves on $\Sm/S$,  and with $\SH_{S^1}(S)^{\ge 0}$ the full subcategory of the $E$ with $\pi_nE=0$ for $n>0$.  This all follows from results of Morel,  see \cite[theorem  4.3.4, lemma 4.3.7]{MorelICTP}.

For $\Spt_{\P^1}(S)$, we use  $K_n^{\P^1}(S):=\{ F_q^{\P^1}\Sigma^p_{S^1}\sX,  \sX\in \Spc_\bullet(S), p-q\ge n\}$, with $F_q^{\P^1}\sX:=(F_q^{\P^1}\sX_0, F_q^{\P^1}\sX_1,\ldots)$, $F^{\P^1}_q\sX_n=\pt$ for $n<q$, $F_q^{\P^1}\sX_n=\Sigma_{\P^1}^{n-q}\sX$ for $n\ge q$, and with identity bonding maps. Assuming that $S=\Spec k$ with $k$ a perfect field, the $K^{\P^1}_n(S)$-cellular objects are those $\sE\in \Spt_{\P^1}(S)$ with stable $\A^1$-homotopy sheaves $\pi_{m+q,q}\sE$ zero for $m<n$, $q\in \ZZ$.  then in this stable model category, the  subcategory $\SH(S)^{\le 0}:=\Ho R_{K_0^{\P^1}(S)}\Spt_{\P^1}(S)$ of the homotopy category $\SH(S)$ of $\Spt_{\P^1}(S)$ is half of a $t$-structure  with $\SH(S)^{\ge 0}$ the full subcategory of the $\sE$ with $\pi_{m+q,q}E=0$ for $m>0$, $q\in\ZZ$.   The heart is Morel's category of ``homotopy modules'' \cite[definition 5.2.4]{MorelICTP}, see \cite[theorem 5.2.3, theorem 5.2.6]{MorelICTP} for detailed statements.
\\\\
2. {\em Slice towers.} This is modification of the construction in $\Spc_\bullet(S)$ given  in (1), using the set $K_n^t$ of objects of the form $\Sigma^b_{\G_m}\sX$, with $b\ge n\ge0$. The $S^1$-stable version uses the set  of objects of the form 
$F_m\Sigma^b_{\G_m}\sX$ with $b\ge n\ge0$ and the $\P^1$-stable version uses the set  of objects of the form 
$F^{\P^1}_m\Sigma^b_{\G_m}\sX$ with $b-m\ge n$, $n\in\ZZ$. Varying $n$, the  first two yield the slice tower
\[
\ldots\to f_{n+1}^t\sX\to f_n^t\sX\to\ldots\to f_0^t\sX=\sX
\]
while the $\P^1$-version gives us the doubly infinite tower
\[
\ldots\to f_{n+1}^t\sE\to f_n^t\sE\to\ldots\to \sE.
\]
Replacing $K_n^t$ with $K_n^{t,\sS}$ gives the slice towers
\begin{gather*}
\ldots\to f_{n+1}^{t,\sS}\sX\to f_n^{t,\sS}\sX\to\ldots\to f_0^{t,\sS}\sX=\sX,\\
\ldots\to f_{n+1}^{t,\sS}\sE\to f_n^{t,\sS}\sE\to\ldots\to \sE.
\end{gather*}
in $\Spc_\bullet(S)^\sS$, $\Spt_{S^1}(S)^\sS$ and $\Spt_{\P^1}(S)^\sS$. There are similarly defined versions in categories of $T$-spectra ($T=\A^1/\A^1\setminus\{0\}$) or the various flavors of symmetric spectra. As above, we refer the reader to \cite{PelaezUnstable} and  \cite[theorem 3.1, remark 3.3]{LevineComp} for details. 
\\\\
3. {\em Betti realizations}. Betti realizations are left derived functors  of a left Quillen functor $\An^*$, either on categories of spaces over $k$, or on the various spectrum categories, where $\An^*$ is a left Kan extension of the functor sending a smooth $k$-scheme $X$ to the topological space of its $\CC$-points (with respect to a fixed embedding $k\hookrightarrow\CC$) or if one prefers $\Spc$ as target category, to the singular complex of this space. As a left derived functor of a left Quillen functor, the resulting Betti realization functor on the appropriate homotopy category is constructed by applying $\An^*$ (or some allied construction, in the case of spectra) to a cofibrant resolution for a suitable (cellular) model structure. Thus, we may form a Betti resolution for functor categories by first noting that $\An^*$ extends by applying it pointwise to a left Quillen functor between functor categories, and then applying this to cofibrant resolutions in the domain functor category.

Fix an embedding $\sigma:k\to \CC$. We use the Betti realization of Panin-Pimenov-R\"ondigs \cite{PPR}, modified to pass to $\Spc$ instead of locally compact Hausdorff spaces. This functor arises from the left Quillen functor
\[
\An^*:\Spc_\bullet(k)\to \Spc_\bullet
\]
which is the Kan extension of the functor sending $X\in \Sm/k$ to the singular complex of $X^\an$, this latter being the topological space of $\CC$-points of $X^\sigma$, endowed with the classical topology. 

One extends this to $\P^1$-spectra using the fact that $(\P^1)^\an\cong S^2$, and that $\An^*$ is symmetric monoidal, then using an equivalence of $\Spt$ and $S^2$-spectra. Glossing over this latter equivalence, we have the isomorphism (in $\SH$)
\[
\Re_B(\MGL)\cong \MU.
\]
There is a similar version from symmetric $\P^1$-spectra to symmetric $S^2$-spectra, inducing an isomorphic functor on the 
homotopy categories.

Finally, the Betti realization functor extends to a left Quillen functor
\[
\An^{\sS,*}:\Spt_T(k)^\sS\to \Spt_{S^2}^\sS,
\]
with a natural isomorphism $i_s^*\circ \An^{\sS,*}\cong \An^*\circ i_{s*}$; note that one needs to use a different model structure on $\Spt_T(k)$ than the one we have been using, see \cite[\S A4]{PPR}  and \cite[\S 5]{LevineComp} for details.\footnote{We still use the projective model structure on $\Spt_T(k)^\sS$, but with respect to the Panin-Pimenov-R\"ondigs model structure on $\Spt_T(k)$.}  For other versions of the Betti realization, see  \cite[definition 2.1]{Ayoub},  \cite{Riou} and   \cite[\S 4]{VoevMotivic}.

 We let
\[
\Re_B^\sS:\Ho\Spt_T(k)^\sS\to \Ho\Spt^\sS
\]
be the left derived functor of $\An^{\sS,*}$ composed with the equivalence $\Ho\Spt_{S^2}^\sS\cong \Ho\Spt^\sS$. 
\end{exs}

\begin{rem}  Since the Postnikov tower
\[
\ldots\to f_{n+1}^\sS\to f_n^\sS\to\ldots \to \id
\]
and the slice tower
\[
\ldots\to f_{n+1}^{t,\sS}\to f_n^{t,\sS}\to\ldots\to \id,
\]
are both defined via cofibrant replacement functors on the appropriate model categories, we thus have for $a\le b$ well-defined endofunctors $\tilde{f}_{a/b}^\sS:=\hofib(f_b^\sS\to f_a^\sS)$ and $\tilde{f}_{a/b}^{t,\sS}:=\hofib(f_b^{t,\sS}\to f_a^{t,\sS})$, giving rise to homotopy fiber sequences
\[
\tilde{f}_{a/b}^\sS\to f_b^\sS\to f_a^\sS;\ \tilde{f}_{a/b}^{t,\sS}\to f_b^{t,\sS}\to f_a^{t,\sS}.
\]
In the stable setting, this gives us the homotopy fiber sequences 
\[
f_b^\sS\to f_a^\sS\to f_{a/b}^\sS\text{\ \ and\ \ }\ f_b^{t,\sS}\to f_a^{t,\sS}\to f_{a/b}^{t,\sS}
\]
by defining
\[
f_{a/b}^\sS =\Sigma\circ \tilde{f}_{a/b}^\sS\text{\ \ and\ \ }f_{a/b}^{t,\sS}=\Sigma\circ \tilde{f}_{a/b}^{t,\sS}. 
\]
Even in the unstable setting, if we fix an object $\sY$ and let $\sX=\Omega(\sY)$, then we have a canonical weak equivalence
\[
f_n^\sS(\sX)\cong \Omega(f_{n+1}^\sS(\sY))
\]
giving us the homotopy fiber sequence $f_b^\sS(\sX)\to f_a^\sS(\sX)\to\tilde{f}_{a+1/b+1}^\sS(\sY)$. We may therefore define $f_{a/b}^\sS(\sX):=\tilde{f}_{a+1/b+1}^\sS(\sY)$, giving the homotopy fiber sequence
\[
f_b^\sS(\sX)\to f_a^\sS(\sX)\to f_{a/b}^\sS(\sX)
\]
which is at least functorial in $\sY$ (for $\sX=\Omega(\sY)$). We can make a similar definition of $f_{a/b}^{t,\sS}(\sX)$ in the setting of the slice tower, having chosen a delooping $\sY$ of $\sX$. We set 
\[
s^\sS_n:=f_{n/n+1}^\sS\text{\ \ and\ \ }s^{t,\sS}_n:=f_{n/n+1}^{t,\sS}.
\]

As the Postnikov tower and slice tower are defined via cofibrant replacement functors, the functors $f_n^\sS$ and $f_n^{t, \sS}$ send fibrant objects to fibrant objects. The same holds for the layers $ f_{a/b}^\sS$ and $f_{a/b}^{t,\sS}$, assuming that the choice of delooping is fibrant.
\end{rem}

We conclude with a simple result concerning connectivity.

\begin{lem}\label{lem:Connectivity} Take $\sE\in \Spt^\sS$ such that $i_{s*}\sE$ is $(n-1)$-connected for each $s\in \sS$. Then $f^\sS_n\sE\to \sE$ is a weak equivalence.
\end{lem}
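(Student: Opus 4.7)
The plan is to reduce to a pointwise statement and invoke the functorial Bousfield-localization machinery set up earlier in the section. Recall that $f^\sS_n$ is defined as the cofibrant replacement functor for the right Bousfield localization $R_{K^\sS_n}\Spt^\sS$, where $K^\sS_n = \{i_s^* a : a \in K_n,\ s \in \sS\}$ and $K_n$ is the generating set for the classical Postnikov truncation. By Lemma~\ref{lem:FunctBous}(2), applied to the projective model structure on $\Spt^\sS$, for any $s \in \sS$ the map $i_{s*}(f^\sS_n \sE) \to i_{s*}\sE$ is a cofibrant replacement of $i_{s*}\sE$ in $R_{K_n}\Spt$, that is, it agrees (in the appropriate homotopy category) with the classical $(n-1)$-connected cover $f_n(i_{s*}\sE) \to i_{s*}\sE$.

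Next, I would invoke the universal property of the $(n-1)$-connected cover: the map $f_n(X) \to X$ is a weak equivalence in $\Spt$ whenever $X$ is already $(n-1)$-connected, since then $\mathrm{id}_X$ factors through $f_n(X) \to X$ via the defining universal property from the right Bousfield localization with respect to $K_n$. Applying this with $X = i_{s*}\sE$, which is $(n-1)$-connected by hypothesis, shows that $i_{s*}(f^\sS_n \sE) \to i_{s*}\sE$ is a weak equivalence for every $s \in \sS$.

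Finally, weak equivalences in the projective model structure on $\Spt^\sS$ are detected pointwise by the evaluation functors $i_{s*}$, so the pointwise weak equivalences assembled in the previous step imply that $f^\sS_n \sE \to \sE$ is a weak equivalence in $\Spt^\sS$. There is no real obstacle here: the only substantive input is Lemma~\ref{lem:FunctBous}(2), which identifies the pointwise values of the functorial cofibrant replacement in $R_{K^\sS_n}\Spt^\sS$ with cofibrant replacements in $R_{K_n}\Spt$; once this identification is in hand the result is immediate from the classical connectivity statement.
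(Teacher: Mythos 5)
Your proposal is correct and follows essentially the same route as the paper: identify $i_{s*}f^\sS_n\sE$ with $f_n i_{s*}\sE$ via lemma~\ref{lem:FunctBous}(2), observe the pointwise maps are weak equivalences because each $i_{s*}\sE$ is already $(n-1)$-connected, and conclude by pointwise detection of weak equivalences. The paper's proof is just a terser version of exactly this argument.
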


\begin{proof}
Since $i_{s*}f^\sS_n\sE\cong f_ni_{s*}\sE$, our assumption on $\sE$ implies that $i_{s*}f^\sS_n\sE\to i_{s*}\sE$ is a weak equivalence for each $s\in \sS$, and thus $f^\sS_n\sE\to \sE$ is a weak equivalence.
\end{proof}

\section{Cosimplicial objects in a model category}\label{sec:Cosimpl} We will work in a fairly general setting, letting $\sM$ be  a pointed closed  symmetric monoidal  simplicial model category. The reader can keep in mind the example $\sM=\Spc_\bullet$,  the category of   pointed simplicial sets.  

This material, as well as much of the material in the next section, may be found in the beginning of \cite{Bousfield}.

We have the functor category  $\sM^\Delta$ of cosimplicial objects in $\sM$.  We give $\sM^\Delta$ the Reedy model structure, unless stated explicitly otherwise.  For $\sX:\Delta\to \sM$, we often write $\sX^n$ for $\sX([n])$.  
 
\begin{rem} The unit for the monoidal structure on $\sM^\Delta$ is the constant cosimplicial object on the unit $\1$ in $\sM$; this is usually not a cofibrant object in $\sM^\Delta$.

If $A$ is an object in $\sM$, write $cA$ for the constant cosimplicial object. The functor $c$ does not in general preserve cofibrations; however, if $i:A\to B$ is a cofibration in $\sM$ and $p:\sX\to \sY$ is a fibration in $\sM^\Delta$ with $\sY$ (and hence $\sX$) fibrant, then
\[
\sHom(cB, \sX)\to \sHom(cA,\sX)\times_{\sHom(cA, \sY)}\sHom(cB, \sY)
\]
is a fibration, and is a trivial fibration if either $i$ or $p$ is a weak equivalence.
\end{rem}

We consider the full subcategory $\Delta^{\le n}$ of $\Delta$, with objects $[k]$, $k=0,\ldots, n$; note that  $\Delta^{\le n}$ is also a Reedy category with the evident $+$ and $-$ subcategories. We usually give $\sM^{\Delta^{\le n}}$ the Reedy model category structure.

For $T\in\sM$, we write $\Omega_T$ for the functor $\sHom(T,-):\sM\to \sM$, right adjoint to $\Sigma_T$, where $\Sigma_T(X)=X\wedge T$. We also write $\Omega_T$ for the functor $\sHom(cT,-):\sM^\Delta\to \sM^\Delta$, leaving it to the context to determine the precise meaning.  Similarly, we may use the $\Spc_\bullet$-structure to define $\Omega_K:=\sHom(K,-):\sM\to \sM$, right adjoint to $\Sigma_K$, $\Sigma_K(X)=X\wedge K$, and also 
$\Omega_K:=\sHom(K,-):\sM^\Delta\to \sM^\Delta$. We write $\Omega$ and $\Sigma$ for $\Omega_{S^1}$ and $\Sigma_{S^1}$.

\subsection{The total complex and associated towers}
We recall the construction of towers associated to cosimplicial objects, recapping the construction of \cite{BK} for cosimplicial spaces, which was generalized to cosimplicial objects in a simplicial model category in \cite{Bousfield}.

Let $\sX$ be a cosimplicial object in $\sM$. We have the associated total object $\Tot\,\sX:=\sHom^\Delta(\Delta[*], \sX)$ in $\sM$; note that $\Delta[*]$ is a cofibrant object in $\Spc^\Delta$, hence the functor $\Tot:\sM^\Delta\to \sM$ is a right Quillen functor with left adjoint $\sA\mapsto \sA\times\Delta[*]$. We make the analogous definition in the pointed setting.

For $T\in \sM$,  $\sX\in \sM^\Delta$, the adjoint property of $\sHom$ gives the isomorphism in $\sM$
\begin{multline*}
\sHom_{\sM}(T, \Tot\,\sX)\cong \sHom_{\sM}^\Delta(T\times\Delta[*],  \sX)\\
\cong \sHom^\Delta(\Delta[*], \sHom_\sM(T, \sX))=\Tot( \sHom_\sM(T, \sX)).
\end{multline*}
That is,  we have the canonical isomorphism $\Omega_T\Tot\, \sX\cong \Tot\,\Omega_T\sX$. Consequently, assume $\sM$ is a category of $T$-spectra in some model category $\sM_0$. 
For $\sE:\Delta\to\Spt_T^{\sM_0}$ a cosimplicial  $T$-spectrum, 
\[
\sE=(\sE_0,\ldots,\sE_n,\ldots),
\]
with  bonding maps $\epsilon_n:\sE_n\to \Omega_T\sE_{n+1}$, $\Tot\,\sE$ is the spectrum $(\Tot\, \sE_0, \Tot\,\sE_1,\ldots)$ with bonding maps $\Tot\,\epsilon_n:\Tot\,\sE_n\to \Tot\,\Omega_T\sE_{n+1}\cong \Omega_T\Tot\,\sE_{n+1}$.

Let  $i_k:\Delta^{\le k}\to \Delta$ be the inclusion functor, and let $\Spc^{(k)}$ be the category of presheaves of sets on $\Delta^{\le k}$. Restricting via $i_k$ gives the functor $i_{k*}:\Spc\to\Spc^{(k)}$, which admits the left adjoint $i_k^*:\Spc^{(k)}\to \Spc$; the $k$-skeleton functor $\sk_k$ is the composition $i_k^*\circ i_{k*}$ with counit $\sk_k\to\id$. We write $A^{(k)}$ for $\sk_kA$. We have the canonical natural transformations $\iota_{m,k}:\sk_k\to \sk_m$, for $0\le k\le m$, with $\iota_{n,m}\circ \iota_{m,k}=\iota_{n,k}$ for $k\le m\le n$. 

Let $\iota_k:\Delta[*]^{(k)}\to \Delta[*]$ be the $k$-skeleton of $\Delta[*]$, that is, the cosimplicial simplicial set $n\mapsto \Delta[n]^{(k)}$. For $\sX$ a cosimplicial object of $\sM$, let $\Tot_{(k)}\sX:=\sHom^\sM(\Delta[*]^{(k)}, \sX)$.  The sequence of cofibrations
\[
\0:=\Delta^{(-1)}\hookrightarrow \Delta[*]^{(0)}\hookrightarrow\Delta[*]^{(1)}\hookrightarrow\ldots\hookrightarrow\Delta[*]^{(k)}\hookrightarrow\ldots\Delta[*]
\]
thus gives the tower in $\sM$
\begin{equation}\label{eqn:Tower}
\Tot\,\sX\to\ldots\to \Tot_{(k)}\sX\to \ldots\to \Tot_{(1)}\sX\to \Tot_{(0)}\sX\to \Tot_{(-1)}\sX:=\pt
\end{equation}
which is a tower of fibrations if $\sX$ is fibrant.  

We let $\Tot^{(k)}\sX\to \Tot\,\sX$ be the homotopy fiber of $\Tot\,\sX\to \Tot_{(k-1)}\sX$, giving the tower in $\sM$
\begin{equation}\label{eqn:Tower2}
\ldots\to\Tot^{(k+1)}\sX\to \Tot^{(k)}\sX\to\ldots\to \Tot^{(1)}\sX\to \Tot^{(0)}\sX= \Tot\,\sX.
\end{equation}

For $m\ge k\ge-1$, let $\Tot_{(m/k)}\sX$ be the homotopy fiber of 
$\Tot_{(m)}\sX\to \Tot_{(k)}\sX$.  For $m\ge k\ge0$, let $\Tot^{(k/m)}\sX=\Tot_{(m-1/k-1)}\sX$. To unify the notation, we define 
\[
\Tot^{(0)}:=\Tot=:\Tot_{(\infty)}\text{\ \ and\ \ } \Tot^{(m/\infty)}:=\Tot^{(m)}=:\Tot_{(\infty/m-1)}.
\]

The homotopy fiber sequence $\Tot_{(m-1/k-1)}\sX\to \Tot_{(m-1)}\sX\to \Tot_{(k-1)}\sX$ and an application of the Quetzalcoatl lemma  to the commutative diagram
\[
\xymatrix{
\Tot\,\sX\ar@{=}[d]\ar[r]&\Tot_{(m-1)}\sX\ar[d]\\
\Tot\,\sX\ar[r]&\Tot_{(k-1)}\sX
}
\]
gives us the homotopy fiber sequence 
\begin{equation}\label{eqn:FiberSeq0}
\Omega\Tot^{(k/m)}\sX\to \Tot^{(m)}\sX\to \Tot^{(k)}\sX.
 \end{equation}
 
 Suppose we have a delooping $\sZ$ of $\sX$.  We get natural (in $\sZ$) deloopings 
$\Tot^{(r)}\sX\cong \Omega \Tot^{(r)}\sZ$
 for all $r$, which in turn give us natural deloopings $\Tot^{(r/s)}\sX\cong \Omega \Tot^{(r/s)}\sZ$ for all $r, s$. Extending the homotopy fiber sequence 
 \eqref{eqn:FiberSeq0} for $\sZ$ to the left gives us the homotopy fiber sequence 
\[
\Omega\Tot^{(m)}\sZ\to \Omega\Tot^{(k)}\sZ\to \Omega\Tot^{(k/m)}\sZ; 
 \]
 using our deloopings thus gives us the homotopy fiber sequence
 \begin{equation}\label{eqn:FiberSeq1}
\Tot^{(m)}\sX\to \Tot^{(k)}\sX\xrightarrow{\rho_{k/m}} \Tot^{(k/m)}\sX,
 \end{equation}
 with $\rho_{k/m}$ natural in $\sZ$. In fact, $\rho_{k/m}$  is natural in $\sX$ and may be defined directly as follows: the composition $\Tot^{(k)}\sX\to \Tot\,\sX\to \Tot_{(m-1)}\sX\to \Tot_{(k-1)}\sX$ is just the composition in the homotopy fiber sequence $\Tot^{(k)}\sX\to \Tot\,\sX\to\Tot_{(k-1)}\sX$, so is canonically homotopic to the trivial map. This gives the canonical lifting of the composition $\Tot^{(k)}\sX\to \Tot\,\sX\to \Tot_{(m-1)}\sX$ to a map $\Tot^{(k)}\sX\to  \Tot_{(m-1/k-1)}\sX$ which is just the map $\rho_{k/m}$.

In case $\sM$ is a stable model category, the loops functor $\Omega$ is invertible in the homotopy category, so our delooping assumption is automatically satisfied, and we just define $\Tot^{(k/m)}\sX$ as the homotopy fiber of $\tilde{\Sigma}\Tot_{(m-1)}\sX\to \tilde{\Sigma}\Tot_{(k-1)}\sX$, where $\tilde{\Sigma}$ is the functorial fibrant model of the suspension.

We fix a homotopy functor $\pi_*$ on $\sM$.  Rather than try to give an axiomatic treatment, we list the examples of interest:
\begin{enumerate} 
\item $\sM=\Spc_\bullet$, $\pi_*$ the usual direct sum of the homotopy groups (pointed sets for $*=0$).
\item $\sM=\Spc_\bullet(S)$, $\pi_*$ the Nisnevich sheaf of $\A^1$-homotopy groups (pointed sets for $*=0$), and $S=\Spec k$ with $k$ a perfect field.
\item $\sM=\Spc_\bullet(S)$, $\pi_n:=\oplus_{m\ge0}\pi_{n+m,m}$, and $S=\Spec k$ with $k$ a perfect field.
\end{enumerate}
These all have the property that a map $f:X\to Y$ in $\sM$ is a weak equivalence if and only if $f$ induces an isomorphism on $\pi_*$ for all choices of base point in $X$.\footnote{In cases (2) and (3),  the choice of base point is a local one, with respect to the Nisnevich topology.} For the case of a stable model category, we will assume that $\pi_*$ is the graded truncation functor associated to a nondegenerate $t$-structure on $\Ho\sM$ and again that a map $f:X\to Y$ in $\sM$ is a weak equivalence if and only if $f$ induces an isomorphism on $\pi_*$. Our main examples of interest are
\begin{enumerate}
\item $T=S^1$, $\sM=\Spt_{S^1}(S)$, $\pi_n$ the stable $\A^1$-homotopy sheaf.
\item $T=S^1$, $\sM=\Spt_{S^1}(S)$, $\pi_n:=\oplus_{m\ge 0}\pi_{n+m,m}$, $n\in\ZZ$, with $\pi_{a,b}$ the bigraded stable $\A^1$-homotopy sheaf.
\item $T=\P^1, \A^1/\A^1\setminus\{0\}$ or some other convenient model of $\P^1$, $\sM=\Spt_T(S)$, $\pi_n:=\oplus_{m\in\ZZ}\pi_{n+m,m}$, $n\in\ZZ$,
\end{enumerate}

For a cosimplicial abelian group $n\mapsto A^n$, we have the associated complex $A^*$ with differential the alternating sum of the coface maps. We also have the quasi-isomorphic normalized subcomplex $NA^*$ with $NA^n:=\cap_{i=0}^{n-1}\ker s_i$.  For a cosimplicial   object $\sX\in \sM^\Delta$, let $N\sX^n$ be the fiber of $s^n:\sX^n\to M^n(\sX)$ (over the base point).  

\begin{lem}\label{lem:FiberSeq1} There is a natural isomorphism  of $\Omega_{\Delta[n]/\del\Delta[n]}N\sX^n$ with the fiber of the map $\Tot_{(n)}\sX\to \Tot_{(n-1)}\sX$. If $\sX$ is fibrant, the induced map $\Omega_{\Delta[n]/\del\Delta[n]}N\sX^n\to \Tot_{(n/n-1)}\sX$ gives rise to an isomorphism
\begin{equation}\label{eqn:LayerIso}
 \Omega^n N\sX^n\cong \Tot_{(n/n-1)}\sX.
\end{equation}
in $\Ho\sM$. Moreover, we have an isomorphism $\pi_jN\sX^n\cong N(\pi_j\sX)^n\subset \pi_j\sX^n$.
\end{lem}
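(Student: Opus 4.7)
The plan is to identify the strict fiber of $\Tot_{(n)}\sX\to\Tot_{(n-1)}\sX$ as the mapping object out of the objectwise cofiber $\sQ^n:=\Delta[*]^{(n)}/\Delta[*]^{(n-1)}$, and then compute this mapping object explicitly via an equalizer.  First I would note that $\Delta[*]^{(n-1)}\hookrightarrow\Delta[*]^{(n)}$ is a Reedy cofibration of cosimplicial (pointed) simplicial sets, so by the two-variable Quillen adjunction giving $\sM^\Delta$ its $\Spc_\bullet^\Delta$-structure, $\Tot_{(n)}\sX\to\Tot_{(n-1)}\sX$ is a fibration whenever $\sX$ is Reedy fibrant.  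Consequently the strict fiber already models the homotopy fiber $\Tot_{(n/n-1)}\sX$ in $\Ho\sM$, and by the pointed enrichment this strict fiber is exactly $\sHom^\Delta(\sQ^n,\sX)$.

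Next I would describe $\sQ^n$ concretely.  Because $\Delta[m]^{(n-1)}$ and $\Delta[m]^{(n)}$ differ precisely in the non-degenerate $n$-simplices of $\Delta[m]$, which are indexed by the injective order-preserving maps $\beta\colon [n]\hookrightarrow[m]$, one obtains $\sQ^n_m\cong\bigvee_{\beta}\Delta[n]/\del\Delta[n]$ as a pointed simplicial set.  The cosimplicial structure map induced by $\alpha\colon [m]\to[m']$ sends the $\beta$-summand to the $(\alpha\circ\beta)$-summand via the identity when $\alpha\circ\beta$ is still injective, and to the basepoint otherwise, since any non-injective $[n]\to[m']$ represents a degenerate class collapsed to the basepoint in $\Delta[m']^{(n)}/\Delta[m']^{(n-1)}$.

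The core of the proof is then the equalizer computation for $\sHom^\Delta(\sQ^n,\sX)$.  An element is, by the enriched end formula, a coherent family $\{\phi_{m,\beta}\}$ with $\phi_{m,\beta}\in\Omega_{\Delta[n]/\del\Delta[n]}\sX^m$.  Applying the equalizer relation to $\alpha=\beta$ and the identity $\beta\circ\id_{[n]}=\beta$ shows that the entire family is determined by $\phi_{n,\id}$ via $\phi_{m,\beta}=\sX(\beta)(\phi_{n,\id})$.  Applying the relation instead to non-identity surjections $\sigma\colon [n]\twoheadrightarrow[k]$ with $k<n$ (for which $\sigma\circ\id_{[n]}$ is non-injective and thus lands in the basepoint summand) forces $\sX(\sigma)(\phi_{n,\id})=0$, i.e.\ $\phi_{n,\id}$ factors through $N\sX^n$, so $\phi_{n,\id}\in\Omega_{\Delta[n]/\del\Delta[n]}N\sX^n$.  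Conversely, any such $\phi_{n,\id}$ propagates consistently: any non-injective composite $\alpha\circ\beta$ factors through some codegeneracy $s^i$ and hence annihilates $\phi_{n,\id}$ automatically.  This gives the natural isomorphism $\sHom^\Delta(\sQ^n,\sX)\cong\Omega_{\Delta[n]/\del\Delta[n]}N\sX^n$, and invoking the weak equivalence $\Delta[n]/\del\Delta[n]\simeq S^n$ on the fibrant object $N\sX^n$ yields the isomorphism \eqref{eqn:LayerIso} in $\Ho\sM$.

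For the last assertion, I would use that Reedy fibrancy of $\sX$ makes $\sX^n\to M^n\sX$ a fibration, and that in the cosimplicial setting the codegeneracies are split by suitable cofaces, producing a natural retraction and hence a decomposition $\sX^n\simeq N\sX^n\times M^n\sX$ of underlying objects; since $\pi_j$ commutes with the iterated fiber product of fibrations defining $M^n$, one gets $\pi_jM^n\sX\cong M^n(\pi_j\sX)$ and then $\pi_j\sX^n\cong \pi_jN\sX^n\oplus M^n(\pi_j\sX)$, from which $\pi_jN\sX^n\cong N(\pi_j\sX)^n\subset \pi_j\sX^n$ by the very definition of the normalized term.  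The step I expect to be the main obstacle is the equalizer identification in the third paragraph: the combinatorial bookkeeping is transparent, but making the argument rigorous as an identity of objects in $\sM$ (rather than pointwise on sections) requires carefully exploiting the simplicial enrichment and the fact that $\sHom^\Delta(-,\sX)$ converts pointwise colimits in the source into limits in $\sM$.
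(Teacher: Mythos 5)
Your proposal follows essentially the same route as the paper: identify the strict fiber with $\sHom^\Delta(\Delta[*]^{(n)}/\Delta[*]^{(n-1)},\sX)$, compute it by the equalizer/wedge decomposition to get $\Omega_{\Delta[n]/\del\Delta[n]}N\sX^n$, use Reedy fibrancy so that the strict fiber models $\Tot_{(n/n-1)}\sX$ and the weak equivalence $S^n\simeq\Delta[n]/\del\Delta[n]$ gives \eqref{eqn:LayerIso}, and prove the last claim by the Bousfield--Kan splitting argument (which the paper simply cites). The only caution is that the asserted object-level decomposition $\sX^n\simeq N\sX^n\times M^n\sX$ does not hold in $\sM$ in general; the cofaces splitting the codegeneracies yield the splitting only after applying $\pi_j$, via the long exact sequence of the fibration $N\sX^n\to\sX^n\to M^n\sX$ together with $\pi_jM^n\sX\cong M^n\pi_j\sX$, which is all that is needed.
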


\begin{proof}  See \cite[Chap. X, Prop. 6.3]{BK} for a proof in the case $\sM=\Spc$. 

The fiber of $\Tot_{(n)}\sX\to \Tot_{(n-1)}\sX$ is equal to 
$\sHom_\sM(\Delta[*]^{(n)}/\Delta[*]^{(n-1)}, \sX)$. This in turn is isomorphic to the equalizer
\[
\sHom_\sM(\Delta[*]^{(n)}/\Delta[*]^{(n-1)}, \sX)\to
\sHom(\Delta[n]/\del\Delta[n], \sX^n)\xymatrix{\ar@<3pt>[r]^\alpha\ar@<-3pt>[r]_\beta&}\prod_{i=0}^{n-1}\sX^{n-1}
\]
where $\alpha(f)=\prod_is_i\circ f$ and $\beta$ is the map to the base point. This gives the asserted identification of  $\sHom_\sM(\Delta[*]^{(n)}/\Delta[*]^{(n-1)}, \sX)$ with  $\Omega_{\Delta[n]/\del\Delta[n]}N\sX^n$.

Assume that $\sX$ is fibrant. As  $\Delta[*]^{(n-1)}\to \Delta[*]^{(n)}$ is a cofibration, the map $\Tot_{(n)}\sX\to \Tot_{(n-1)}\sX$ is a fibration, hence the induced map $\Omega_{\Delta[n]/\del\Delta[n]}N\sX^n\to \Tot_{(n/n-1)}\sX$
 is a weak equivalence. Since $\sX$ is fibrant,  so is $N\sX^n$, hence a weak equivalence $(S^1)^{\wedge n}\to \Delta[n]/\del\Delta[n]$ induces a weak equivalence $\Omega_{\Delta[n]/\del\Delta[n]}N\sX^n\to \Omega^nN\sX^n$. The last assertion is proven for simplicial sets in  \cite[Chap. X, Prop. 6.3]{BK}; the same proof works in general.
\end{proof}

Consider the following conditions on a cosimplicial pointed space $\sX$:
\begin{equation} \label{eqn:ConvCond}
\vbox{\begin{enumerate}
\item $\sX$ is fibrant and there is a fibrant cosimplicial object $\sY$ in $\sM^\Delta$ and an isomorphism $\sX\cong \Omega^2\sY$ in $\Ho\sM^\Delta$.
\item Given an integer $i\ge0$,  there is an integer $N_i$ such that $(N\pi_j\sX)^n=0$ for $n\ge N_i$, $j\le i+n$. 
\end{enumerate}}
\end{equation}
In the stable case,  we have the analog of these conditions for $\sX\in \sM^\Delta$, namely,
\begin{equation} \label{eqn:SpectrumConvCond}
\vbox{\begin{enumerate}
\item $\sX$ is  fibrant.
\item Given an integer $i$,  there is an integer $N_i$ such that $(N\pi_j\sX)^n=0$ for $n\ge N_i$, $j\le i+n$.
\end{enumerate}}
\end{equation}

By lemma~\ref{lem:FiberSeq1},  under the assumption that $\sX$ is fibrant, the condition
\eqref{eqn:ConvCond}(2) is equivalent to
\[
\pi_j\Tot_{(n/n-1)}\sX=0\text{ for } j\le i, n\ge N_i.
\]

\subsection{Spectral sequences and convergence}\label{subsec:TotSSeq}
Suppose that $\sX\in\sM_\bullet^\Delta$ is fibrant. The tower of fibrations \eqref{eqn:Tower} gives the  spectral sequence
\begin{equation}\label{eqn:TowerSS}
{}_*E_1^{p,q}(\sX)=\pi_{-p-q}\Tot_{(p/p-1)}\sX\Longrightarrow \pi_{-p-q}\Tot_{(B-1/A-1)}\sX,\ A\le p< B,
\end{equation}
for $0\le A< B\le \infty$. 
Note that we use the Cartan-Eilenberg indexing convention instead of the Bousfield-Kan convention used in \cite{BK}. 

Under the assumption \eqref{eqn:ConvCond}(1)  or \eqref{eqn:SpectrumConvCond}(1), the spectral sequence \eqref{eqn:TowerSS} is  isomorphic to the spectral sequences of the tower \eqref{eqn:Tower2}:

\begin{equation}  \label{eqn:TowerSS2}
E_1^{p,q}(\sX)=\pi_{-p-q}\Tot^{(p/p+1)}\sX\Longrightarrow \pi_{-p-q}\Tot^{(A/B)}\sX,\  A\le p<B,
\end{equation}
for $0\le A<B\le\infty$. 
Furthermore, using lemma~\ref{lem:FiberSeq1} and \eqref{eqn:LayerIso}, the $E_1$-terms are
\[
E_1^{p,q}(\sX)=N\pi_{-q}\sX^p.
\]

\begin{lem}\label{lem:Convergence}  
1. If   $\sX\in \sM_\bullet^\Delta$ satisfies \eqref{eqn:ConvCond}(1), (or \eqref{eqn:SpectrumConvCond}(1) if $\sM$ is a stable model category), then the spectral sequences \eqref{eqn:TowerSS} and \eqref{eqn:TowerSS2} are strongly convergent if $B<\infty$.\\
2. If  $\sX \in \sM_\bullet^\Delta$ satisfies  \eqref{eqn:ConvCond}  (or \eqref{eqn:SpectrumConvCond} if $\sM$ is a stable model category). Then the spectral sequences \eqref{eqn:TowerSS} and  \eqref{eqn:TowerSS2}  are  strongly convergent for all $B$, including $B=\infty$. \\
\end{lem}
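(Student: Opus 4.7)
My plan is to reduce both parts of the lemma to a single connectivity estimate on the layers of the $\Tot$-tower, which I will extract from hypothesis~\eqref{eqn:ConvCond}(2) together with Lemma~\ref{lem:FiberSeq1}.

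For part (1) I would observe that, since $\sX$ is Reedy fibrant, \eqref{eqn:Tower} is an honest tower of fibrations. For $B$ finite, restricting to stages $A-1$ through $B-1$ gives a finite tower whose spectral sequence is by construction the one in question. The delooping hypothesis~\eqref{eqn:ConvCond}(1) (and its stable analogue) lets me extend the long exact homotopy sequences of the layer fibrations down through $\pi_0$, so that \eqref{eqn:TowerSS} is canonically isomorphic to \eqref{eqn:TowerSS2} in this range. Because only finitely many columns $p\in[A,B)$ contribute to any antidiagonal $p+q=-n$, the differentials $d_r$ vanish on each such antidiagonal for $r>B-A$, and the induced filtration on $\pi_n\Tot^{(A/B)}\sX$ has at most $B-A$ nontrivial steps; strong convergence is then automatic.

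For part (2) the extra ingredient \eqref{eqn:ConvCond}(2), translated through the formula $\pi_j\Tot_{(p/p-1)}\sX\cong N\pi_{j+p}\sX^p$ of Lemma~\ref{lem:FiberSeq1}, asserts that for each $i$ there exists $N_i$ such that $\Tot_{(p/p-1)}\sX$ is $i$-connected whenever $p\ge N_i$. Plugging this into the long exact sequence of the layer fibration $\Tot_{(p/p-1)}\sX\to\Tot_{(p)}\sX\to\Tot_{(p-1)}\sX$, the transition map $\pi_i\Tot_{(p)}\sX\to\pi_i\Tot_{(p-1)}\sX$ is forced to be an isomorphism as soon as $p\ge N_i$, so the tower $\{\pi_i\Tot_{(p)}\sX\}_p$ is eventually constant in each degree. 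In particular the $\lim^1$-term in the Milnor sequence for $\Tot\sX=\lim_p\Tot_{(p)}\sX$ vanishes and
\[
\pi_i\Tot\sX \;\cong\; \pi_i\Tot_{(p-1)}\sX \quad \text{for } p\ge N_i+1.
\]
The filtration $F^p\pi_i\Tot\sX:=\ker\bigl(\pi_i\Tot\sX\to\pi_i\Tot_{(p-1)}\sX\bigr)$ is then exhaustive (at $p=0$, since $\Tot_{(-1)}\sX=\pt$), separated, and bounded, which together with the finiteness of the antidiagonals at $E_\infty$ gives strong convergence of the $B=\infty$ spectral sequence. The case of general $A\ge 0$ with $B=\infty$ is handled identically, using the tower of homotopy fibres of $\Tot_{(p)}\sX\to\Tot_{(A-1)}\sX$ in place of the full $\Tot$-tower.

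The main technical subtlety is confined to the unstable case in the lowest dimensions, where $\pi_0$ is only a pointed set and $\pi_1$ is potentially nonabelian; this is precisely why \eqref{eqn:ConvCond}(1) asks for a double delooping of $\sX$, ensuring that all the group-theoretic manipulations in the long exact sequences and the Milnor sequence above take place among abelian groups. In the stable situation this difficulty evaporates and one argues directly from \eqref{eqn:SpectrumConvCond}.
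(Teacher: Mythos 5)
Your proof is correct and follows essentially the same route as the paper's: part (1) is immediate from the finiteness of the tower, and part (2) rests on translating condition~\eqref{eqn:ConvCond}(2) through lemma~\ref{lem:FiberSeq1} into the vanishing of the layers (equivalently of $E_1^{p,q}$) for $p\ge N_{-p-q}$, i.e.\ boundedness of the spectral sequence, with the double delooping disposing of the low-dimensional $\pi_0$/$\pi_1$ issues. The only cosmetic difference is that you carry out the complete-convergence step by hand (eventually constant homotopy groups of the tower stages, vanishing ${\varprojlim}^1$, finite exhaustive separated filtration), where the paper instead invokes the Mittag--Leffler criterion of \cite[IX, proposition 5.7]{BK}.
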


\begin{proof} It suffices to give the proof in the unstable case. (1) follows easily, as in all cases, the associated tower is finite.

For (2), since $\sX\cong \Omega^2\sY$, there are no low-dimensional subtleties, and all the statements we will be using from \cite{BK} make sense and are valid for $\pi_1$ and $\pi_0$. 

 We first show that the sequence is bounded.  Indeed, $E_1^{p,q}=0$ for $p<0$, and if $p+q=-n$, then $E_1^{p, q}=0$ for $p\ge N_n$. In particular, $E_r^{p, -n-p}=E_{r+1}^{p, -n-p}=E_\infty^{p, -n-p}$ for $r\ge \max\{N_n, N_{n-1}\}$ and all $p$.

Thus, the terms $\{E_r\}$ are ``Mittag-Leffler in dimension $i$'' for all $i$ \cite[IX, \S5, pg. 264]{BK} and hence, by \cite[IX, proposition 5.7]{BK}
the spectral sequence converges completely to $\pi_*\Tot\,\sX$. Fix an integer $n\ge0$. Since the sequence is bounded,  the filtration of $\pi_n\Tot\,\sX$ induced by the spectral sequence is finite for each $n$, giving the desired convergence.
\end{proof}

\begin{lem} \label{lem:conn} Suppose  there is an integer $c$ such that $\sX^n$ is $(c-1)$-connected for all $n$. Then for  all $0\le r\le\infty$, $m\in\ZZ$  (in the unstable case, we assume in addition $m\ge0$):\\
1. For $0\le c-m\le r$, the map $\Tot^{(c-m/r)}\sX\to \Tot^{(0/r)}\sX$ induces a surjection 
\[
\pi_m\Tot^{(c-m/r)}\sX\to \pi_m\Tot^{(0/r)}\sX.
\]
2. For $0\le c-m-1\le r$, the map $\Tot^{(c-m-1/r)}\sX\to \Tot^{(0/r)}\sX$ induces an isomorphism 
\[
\pi_m\Tot^{(c-m-1/r)}\sX\to \pi_m\Tot^{(0/r)}\sX.
\]
\end{lem}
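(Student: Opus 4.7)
The plan is to read off connectivity from the layers of the Tot-tower and then conclude by a long exact sequence argument, noting that the relevant fiber sequence is literally built into the definition of $\Tot^{(a/r)}$, so no delooping hypothesis is needed.

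First, I would establish the following connectivity statement: for $a\ge 0$, the partial totalization $\Tot^{(0/a)}\sX=\Tot_{(a-1)}\sX$ is $(c-a)$-connected (with the convention that $\Tot_{(-1)}=\pt$ is infinitely connected). For this, lemma~\ref{lem:FiberSeq1} gives $\pi_j N\sX^k\subset \pi_j\sX^k$, so the hypothesis that each $\sX^k$ is $(c-1)$-connected forces $N\sX^k$ to be $(c-1)$-connected as well. Invoking lemma~\ref{lem:FiberSeq1} again, the layer $\Tot_{(k/k-1)}\sX\cong \Omega^k N\sX^k$ is therefore $(c-k-1)$-connected. Now induct on $k$ using the fiber sequences $\Tot_{(k/k-1)}\sX\to \Tot_{(k)}\sX\to \Tot_{(k-1)}\sX$: the base case $\Tot_{(0)}\sX\simeq \sX^0$ is $(c-1)$-connected by assumption, and the long exact sequence of homotopy groups propagates $(c-k)$-connectedness of $\Tot_{(k-1)}\sX$ and $(c-k-1)$-connectedness of the layer into $(c-k-1)$-connectedness of $\Tot_{(k)}\sX$.

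To finish, I would apply the fiber sequence
\[
\Tot^{(a/r)}\sX\to \Tot^{(0/r)}\sX\to \Tot^{(0/a)}\sX
\]
for $0\le a\le r\le\infty$. Crucially, this is not an instance of \eqref{eqn:FiberSeq1} with its delooping assumption: it holds simply by definition, since for finite $r$ we have $\Tot^{(0/r)}=\Tot_{(r-1)}$, $\Tot^{(0/a)}=\Tot_{(a-1)}$, and $\Tot^{(a/r)}=\Tot_{(r-1/a-1)}$ is the homotopy fiber of the map between them; the case $r=\infty$ is the definition of $\Tot^{(a)}$. The associated long exact sequence reads
\[
\pi_{m+1}\Tot^{(0/a)}\sX\to \pi_m\Tot^{(a/r)}\sX\to \pi_m\Tot^{(0/r)}\sX\to \pi_m\Tot^{(0/a)}\sX.
\]
Substituting $a=c-m$, the connectivity step gives $\pi_m\Tot^{(0/a)}\sX=0$, yielding the surjection in (1). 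Substituting $a=c-m-1$ yields $(c-a)=m+1$-connectedness of $\Tot^{(0/a)}\sX$, so both $\pi_m$ and $\pi_{m+1}$ of $\Tot^{(0/a)}\sX$ vanish, giving the isomorphism in (2). In the unstable case the restriction $m\ge 0$ is exactly what is needed for the long exact sequence to reach the group in question.

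The argument is almost bookkeeping once the layer identification is in hand; the only subtlety worth flagging is the observation that the fiber sequence used in the final step is definitional and hence avoids the delooping hypothesis that would otherwise be required in the unstable setting. Verifying carefully that all index shifts line up (in particular confirming that $\Tot_{(0)}\sX\simeq \sX^0$ provides the correct base case, and that the bound $c-m\le r$, resp. $c-m-1\le r$, is exactly what allows $a$ to index a valid stage of the tower) is the main point to check, but no analytic difficulty arises.
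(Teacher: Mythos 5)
Your argument is correct and follows essentially the same route as the paper: the key step in both is the vanishing $\pi_s\Tot^{(0/b)}\sX=0$ for $s\le c-b$, obtained from the layer identification $\Tot_{(k/k-1)}\sX\cong\Omega^kN\sX^k$ of lemma~\ref{lem:FiberSeq1} together with the hypothesis that each $\sX^n$ (hence each $N\sX^n$) is $(c-1)$-connected, followed by the definitional fiber sequence $\Tot^{(a/r)}\sX\to\Tot^{(0/r)}\sX\to\Tot^{(0/a)}\sX$. The only cosmetic difference is that the paper packages your term-by-term induction along the tower as the strongly convergent spectral sequence of the finite tower converging to $\pi_*\Tot^{(0/b)}\sX$.
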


\begin{proof} We have the strongly convergent spectral sequence
\[
E_1^{p,q}(\sX)=\pi_{-p-q}\Tot^{(p/p+1)}\sX\Longrightarrow \pi_{-p-q}\Tot^{(0/b)}\sX;\quad 0\le p\le b-1.
\]
By  lemma~\ref{lem:FiberSeq1}, $E_1^{p,q}=\pi_{-q}N\sX^p\subset \pi_{-q}\sX^p$, so $E_1^{p,q}=0$ for $-q<c$. Since  $E_1^{p,q}=0$ for $p> b-1$ this implies that  $E_1^{p,q}=0$ for $-p-q\le c-b$. Thus  $\pi_s\Tot^{(0/b)}\sX=0$ for $s\le c-b$, so 
$\pi_s\Tot^{(0/c-m)}\sX=0$ for $s\le m$. Using the homotopy fiber sequence
\[
\Tot^{(c-m/r)}\sX\to \Tot^{(0/r)}\sX\to \Tot^{(0/c-m)}\sX
\]
proves (1). Similarly,  $\pi_s\Tot^{(0/c-m-1)}\sX=0$ for $s\le m+1$, and (2) follows by a similar argument. 
\end{proof}

\section{Cosimplices and cubes} \label{CosimpCube}
The functors $\Tot_{(n)}$ are complicated by the mixture of codegeneracies and coface maps in $\Delta$; in this section we discuss the reduction of $\Tot_{(n)}$  to a homotopy limit over an associated direct category, namely, a punctured $(n+1)$-cube.

As above, we have   the full subcategory $\Delta^{\le n}$ of $\Delta$ and for a model category $\sC$ the restriction functor $\iota_{n*}:\sC^\Delta\to \sC^{\Delta^{\le n}}$ with left adjoint $\iota_n^*$.

Throughout this section we fix a pointed simplicial model category $\sM$.

\begin{lem} \label{lem:Truncation} Take $\sX$  in $\sM^\Delta$.   There is a natural isomorphism
\[
 \Tot_{(n)}\sX\cong  \sHom(\iota_{n*}\Delta[*], \iota_{n*}\sX);
\]
if $\sX$ is fibrant, there is  a natural weak equivalence
\[
\holim_{\Delta^{\le n}}\iota_{n*}\sX\to \Tot_{(n)}\sX.
\]
\end{lem}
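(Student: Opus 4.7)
The plan is to handle the two assertions in turn: the first is a purely formal consequence of an enriched adjunction, while the second identifies $\iota_{n*}\Delta[*]$ as a Reedy cofibrant replacement of the constant diagram on a point, letting the homotopy limit be computed by $\sHom$.

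For the natural isomorphism, I would first verify the identity $\Delta[*]^{(n)} \cong \iota_n^{*}\iota_{n*}\Delta[*]$ as cosimplicial simplicial sets. Using the pointwise formula for the left Kan extension $\iota_n^{*}$, the value at $[m]$ is
\[
(\iota_n^{*}\iota_{n*}\Delta[*])[m] \;=\; \colim_{([k]\to[m]),\, k\le n}\Delta[k],
\]
and this colimit is precisely the $n$-skeleton $\sk_n\Delta[m] = \Delta[m]^{(n)}$, since the $n$-skeleton of a simplicial set is the colimit of its simplices of dimension $\le n$. Under this identification the enriched adjunction $\iota_n^{*}\dashv\iota_{n*}$, applied at the level of the $\sM$-valued internal $\sHom$, yields
\[
\Tot_{(n)}\sX \;=\; \sHom^{\Delta}\bigl(\iota_n^{*}\iota_{n*}\Delta[*],\,\sX\bigr) \;\cong\; \sHom^{\Delta^{\le n}}\bigl(\iota_{n*}\Delta[*],\,\iota_{n*}\sX\bigr).
\]

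For the weak equivalence, assume $\sX$ is Reedy fibrant, so that $\iota_{n*}\sX$ is Reedy fibrant in $\sM^{\Delta^{\le n}}$. By the Quillen bifunctor property discussed in \S\ref{sec:Diagram} (lemma~\ref{lem:Adj2}), the functor $\sHom^{\Delta^{\le n}}(-,\iota_{n*}\sX)\colon (\Spc^{\Delta^{\le n}})^{\op}\to\sM$ sends weak equivalences between Reedy cofibrant objects to weak equivalences in $\sM$. Next I would check that $\iota_{n*}\Delta[*]$ is Reedy cofibrant in $\Spc^{\Delta^{\le n}}$: its latching maps at $[k]$ are the standard boundary inclusions $\partial\Delta[k]\hookrightarrow\Delta[k]$ for $k\le n$. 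Moreover, each $\Delta[k]$ is contractible, so the canonical map $\iota_{n*}\Delta[*]\to c\,*$ to the constant diagram on a point is a pointwise (hence Reedy) weak equivalence.

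Finally, I would appeal to the Bousfield--Kan-style description of the homotopy limit in a simplicial model category: for any Reedy cofibrant $W\in\Spc^{\Delta^{\le n}}$ that is pointwise weakly equivalent to the constant diagram on a point, one has a natural weak equivalence
\[
\holim_{\Delta^{\le n}} \iota_{n*}\sX \;\xrightarrow{\sim}\; \sHom^{\Delta^{\le n}}(W,\iota_{n*}\sX)
\]
whenever the target is Reedy fibrant. Taking $W=\iota_{n*}\Delta[*]$ and combining with the first isomorphism gives the required natural weak equivalence $\holim_{\Delta^{\le n}}\iota_{n*}\sX \to \Tot_{(n)}\sX$.

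The main subtlety is this last invocation: the fact that any Reedy cofibrant replacement of the constant diagram $c\,*$ computes $\holim$ via $\sHom^{\Delta^{\le n}}(-, \iota_{n*}\sX)$ is a standard but nontrivial comparison, traditionally proven by identifying Bousfield--Kan's cosimplicial replacement formula for $\holim$ with the derived mapping space $\mathbb{R}\sHom^{\Delta^{\le n}}(c\,*,-)$. I would cite Bousfield \cite{Bousfield} or Hirschhorn \cite{Hirschhorn} for this fact rather than reproduce the argument; once it is in hand, the rest of the proof is straightforward verification.
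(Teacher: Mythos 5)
Your proof of the first assertion is essentially identical to the paper's: both verify $\Delta[*]^{(n)}\cong\iota_n^*\iota_{n*}\Delta[*]$ by computing the pointwise left Kan extension (the paper does this by identifying both colimits with the set of maps $[k]\to[m]$ factoring through some $[\ell]$ with $\ell\le n$) and then apply the enriched adjunction. For the second assertion the two arguments diverge in execution. The paper constructs an \emph{explicit} acyclic cofibration $\alpha:\iota_{n*}\Delta[*]\to[[k]\mapsto\sN(\Delta^{\le n}/[k])]$, using the fact that the nerve of $\Delta^{\le n}_{inj}/[k]$ is the barycentric subdivision of $\Delta[k]$; applying $\sHom(-,\iota_{n*}\sX)$ to $\alpha$ (with $\iota_{n*}\sX$ Reedy fibrant) then directly produces the map $\holim_{\Delta^{\le n}}\iota_{n*}\sX\to\Tot_{(n)}\sX$ in the asserted direction. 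You instead verify that $\iota_{n*}\Delta[*]$ is Reedy cofibrant (correctly identifying the latching maps as $\del\Delta[k]\hookrightarrow\Delta[k]$) and pointwise contractible, and then invoke the general principle that any Reedy cofibrant replacement of the constant point diagram computes $\holim$. That principle is standard and citable, so your route is legitimate and arguably cleaner, but it buys you a priori only a zigzag of weak equivalences between $\sHom(W,\iota_{n*}\sX)$ and the Bousfield--Kan model of $\holim$; to get a \emph{natural map} in the direction the lemma asserts you would still need either a comparison map of weights (which is exactly what the paper's $\alpha$ supplies) or a version of the cited result that produces one. This is a small but real gap in your write-up as stated; the paper's explicit subdivision map is what closes it.
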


\begin{proof}  We note that we have a canonical isomorphism of cosimplicial spaces
\[
\sk_n\Delta[*]\cong\iota_n^*\iota_{n*}\Delta[*].
\]
Indeed, $(\sk_n\Delta[m])([k])$ is the colimit over $[k]\to[\ell]\in ([k]/\Delta^{\le n})^\op$ of $\Hom_\Delta([\ell],[m])$, while $(\iota_n^*\iota_{n*}\Delta[*])[m]([k])$ is the colimit over $[\ell]\to [m]\in \Delta^{\le n}/[m]$ of $\Hom_\Delta([k],[\ell])$. Both colimits are equal to the subset of $\Hom_\Delta([k],[m])$ consisting of maps  that admit a factorization $[k]\to[\ell]\to[m]$ with $\ell\le n$.

This gives us the isomorphism in $\sM$
\[
\Tot_{(n)}\sX:=\sHom(\sk_n\Delta[*],\sX)\cong \sHom(\iota_{n*}\Delta[*], \iota_{n*}\sX).
\]
For $0\le k\le n$, the nerve of $\Delta^{\le n}_{inj}/[k]$ is the barycentric subdivision of $\Delta[k]$ and sending the nondegenerate $k$-simplex of 
$\Delta[k]$ to the $k$-simplex 
\[
\xymatrix{
\{0\}\ar@{^(->}[r]\ar[drr]&\{0,1\}\ar@{^(->}[r]\ar[dr]&\ldots\ar@{^(->}[r]&\{0,\ldots, k\}\ar[dl]\\
&&\{0,\ldots, k\}&&}
\]
in $\Delta^{\le n}/[k]$ gives rise to an acyclic cofibration 
\[
\alpha:\iota_{n*}\Delta[*]\to [[k]\mapsto \sN\Delta^{\le n}/[k]] 
\] 
 in $\Spc^{\Delta^{\le n}}$.  As $\sX$ is fibrant in $\sM^\Delta$, it follows that $ \iota_{n*}\sX$ is fibrant in $\sM^{\Delta^{\le n}}$, so  $\alpha$ induces a weak equivalence upon applying $\sHom(-, \iota_{n*}\sX)$. As
\[
\holim_{\Delta^{\le n}}\iota_{n*}\sX:=\sHom([[k]\mapsto \sN\Delta^{\le n}/[k]], \iota_{n*}\sX)
\]
by definition,  we thus have the weak equivalences
 \[
 \holim_{\Delta^{\le n}}\iota_{n*}\sX\cong  \sHom(\iota_{n*}\Delta[*], \iota_{n*}\sX)\cong  \Tot_{(n)}\sX.
 \]
 
 \end{proof}
 
 \begin{rem} For $\sM$ the category of pointed simplicial sets, the above result is proven in \cite[Lemma 2.9]{S1}.
 \end{rem}
 
Let $\sq^n$ be the category associated to the set of subsets of $\{1,\ldots, n\}$, with morphisms being inclusions of subsets, and let $\sq^n_0$ the full subcategory of nonempty subsets.We let $i_{I,J}:J\to I$ denote the morphism associated to an inclusion $I\subset J$.  

Give $\{1,\ldots, n\}$ the {\em opposite} of the standard order. The maps $i_{I,J}$  are clearly order preserving, so sending $I$ to the ordered set $[|I|-1]$ by the unique order preserving bijection defines a functor
\begin{equation}\label{eqn:CubeFunctor}
\phi^{n+1}_0:\sq^{n+1}_0\to \Delta^{\le n}\subset \Delta
\end{equation}

For a model category $\sM$ and for $\sC=\sq^{n+1}_0, \sq^n,  \Delta^{\le n}$, we give $\sM^\sC$ the Reedy model structure; as $\sq^{n+1}_0$ and $\sq^n$ are direct categories, this agrees with the projective model structure in these cases.

\begin{prop} \label{prop:SimpVCube} Let $\sM$ be a pointed simplicial model category and
take $\sX$ in $\sM^\Delta$. The map
\[
\phi_0^{n+1*}:\holim_{\Delta^{\le n}}\iota_{n*}\sX\to \holim_{\sq^{n+1}_0} \phi^{n+1}_{0*}\iota_{n*}\sX
\]
induced by the functor $\phi^{n+1}_0:\sq^{n+1}_0\to \Delta^{\le n}$  is a weak equivalence in $\sM$.
\end{prop}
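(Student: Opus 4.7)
The plan is to recognize this as a cofinality statement for homotopy limits and to show that $\phi_0^{n+1}: \sq_0^{n+1} \to \Delta^{\le n}$ is homotopy initial, so that invoking the standard cofinality theorem for homotopy limits in a simplicial model category (e.g., Theorem 19.6.13 in \cite{Hirschhorn}) yields the desired weak equivalence. That theorem says: if $F: \sC \to \sD$ is a functor between small categories such that for every $d \in \sD$ the classifying space of the overcategory $F \downarrow d$ is weakly contractible, then the induced map $\holim_\sD Y \to \holim_\sC F^*Y$ is a weak equivalence for every pointwise fibrant $Y \in \sM^\sD$.

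First I would replace $\sX$ by a Reedy fibrant resolution in $\sM^\Delta$, so that $\iota_{n*}\sX$ is pointwise fibrant in $\sM^{\Delta^{\le n}}$ (equivalently, Reedy fibrant, since $\iota_{n*}$ preserves Reedy fibrancy). This reduction is harmless because both sides of the comparison are defined via functorial fibrant replacements whenever $\sX$ is not fibrant, and such replacements are preserved by the restriction $\iota_{n*}$ up to weak equivalence.

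It then suffices to verify, for each $[k] \in \Delta^{\le n}$, that the nerve of $\phi_0^{n+1} \downarrow [k]$ is weakly contractible. Using the identification $\sq_0^{n+1} \cong \Delta_{inj}/[n]$ recalled in the introduction, an object of $\phi_0^{n+1} \downarrow [k]$ unpacks as a triple $([m], \iota: [m] \hookrightarrow [n], \alpha: [m] \to [k])$ with morphisms the maps in $\Delta_{inj}$ compatible with both $\iota$ and $\alpha$. Equivalently, the nerve of this category is the order complex of the poset of nonempty partial monotone maps from $[n]$ to $[k]$, ordered by extension, i.e., the flag complex of pairs $(i,j) \in [n] \times [k]$ under the relation $(i,j) < (i',j')$ iff $i < i'$ and $j \le j'$.

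The contractibility of this nerve is the combinatorial heart of the argument, and I expect it to be the main obstacle. It is a purely combinatorial statement about the indexing categories, independent of the ambient model category $\sM$, and is essentially what Sinha extracts in his cubical reformulation of the $\Tot$-tower \cite[\S 2]{S1}; consequently the same combinatorial argument applies to any pointed simplicial model category $\sM$. An alternative, more direct verification would produce an explicit contraction by adjoining, one step at a time, the maximum missing element of the domain (extending $\alpha$ by the minimal compatible value) and checking that the resulting zigzag of natural transformations furnishes a deformation of the identity functor on $\phi_0^{n+1} \downarrow [k]$ onto a constant functor. Once the contractibility is in hand, the cofinality theorem immediately delivers the weak equivalence in the statement.
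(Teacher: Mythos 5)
Your strategy---prove that $\phi_0^{n+1}$ is homotopy initial and then invoke the cofinality theorem for homotopy limits---is a legitimate route and is genuinely different from the one taken in the paper. The paper treats Sinha's comparison theorem \cite[Theorem 6.5]{S2} for $\sM=\Spc_\bullet$ as a black box and reduces the general case to it: after replacing $\sX$ by a fibrant model, the functor $\Map_\sM(A,-)$ for cofibrant $A$ commutes with both homotopy limits and detects weak equivalences between fibrant objects, so the simplicial-set case applied to the diagrams $\Map_\sM(A,-)$ yields the general case. Your route, if completed, would be more self-contained and would isolate the content of the proposition in a purely combinatorial statement about the indexing categories; note, however, that by the converse direction of the cofinality theorem (the ``only if'' half of \cite[Theorem 19.6.13]{Hirschhorn}) that combinatorial statement is \emph{equivalent} to the proposition for $\sM=\Spc_\bullet$, so you are not reducing to anything weaker.

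The gap is exactly where you locate it: the contractibility of $N(\phi_0^{n+1}\downarrow[k])$ is never established, and neither of your two justifications holds up. First, \cite[\S 2]{S1} is the wrong reference---that is where $\Tot_{(n)}\sX\cong\holim_{\Delta^{\le n}}\iota_{n*}\sX$ (lemma~\ref{lem:Truncation} here) is proved, not the cubical comparison---and in any case citing a theorem about cosimplicial \emph{spaces} does not by itself deliver the statement about overcategories unless you also run the converse of the cofinality theorem, which you do not do. Second, the sketched explicit contraction fails: the rule ``adjoin the maximal missing element of the domain, extended by the minimal compatible value'' is not order-preserving, hence not an endofunctor of the overcategory, so it supports no natural transformation to or from the identity. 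Concretely, take $n=2$, $k=1$; let $x$ be the partial monotone map with domain $\{2\}$ and $x(2)=1$, and let $y\ge x$ have domain $\{0,2\}$ with $y(0)=y(2)=1$. The maximal missing element is $1$ in both cases, but the minimal value compatible with $x$ at $1$ is $0$, while the minimal value compatible with $y$ at $1$ is $1$ (it must be $\ge y(0)$); the two extensions disagree at $1$, so $g(x)\not\le g(y)$. The defect is structural: enlarging the domain changes the constraints, so no ``canonical minimal extension'' can be monotone, and the standard untruncated cone argument (prepend a new initial vertex mapping to $0$) is unavailable because the new vertex must also be inserted injectively into $[n]$. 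The contractibility is true for $k\le n$ (which is automatic here since $[k]\in\Delta^{\le n}$), but it genuinely uses that inequality---for $k>n$ the corresponding complex is not contractible, e.g.\ for $n=1$, $k=2$ it is a circle---so any correct argument must exploit the truncation bound, and until such an argument is supplied the proof is incomplete.
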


\begin{proof}  As replacing $\sX$ with a fibrant model induces a weak equivalence on the respective homotopy limits, we may assume that $\sX$ is fibrant.  The result in the case $\sM=\sSets_\bullet$  follows from \cite[Theorem 6.5]{S2}. The general case follows from the case of simplicial sets, by taking a cofibrant object $A$ of $\sM$ and applying the mapping space functor $\text{Maps}_\sM(A,-)$ to $\phi_0^{n+1*}$.
\end{proof}

The $n$-cube and punctured $(n+1)$-cube lend themselves to inductive arguments. Take an integer $n\ge1$. We decompose $\sq^{n+1}_0$ into three pieces, by  defining  $\sq^{n-}_0$ to be the full subcategory with objects $I$, $n\not\in I$, 
$\sq^{n+}_0$ the full subcategory with objects $I$, $n\in I$, $I\neq\{n\}$ and $\pt_{n}:=\{n\}$ (with identity morphism). We have the  isomorphisms $j^-_n: \sq^{n}_0\to \sq^{n-}_0$, $j^+_n:\sq^n_0\to  \sq^{n+}_0$ defined as follows: Let $j_n:\{1,\ldots, n\}\to \{1,\ldots, n+1\}$ be the inclusion $j_n(i)=i$ for $1\le i<n$, $j_n(n)=n+1$. Then $j_n^-$ is just the functor induced by $j_n$, and $j_n^+(I)=j_n(I)\cup\{n\}$.  Let $i_n^+:\sq^n_0\to \sq^{n+1}_0$ and $i_n^-:\sq^n_0\to \sq^{n+1}_0$ be the inclusions induced by $j_n^+$ and $j_n^-$.

The inclusions $I\subset I\cup\{n\}$ define a natural transformation $\alpha_n:i_n^-\to i_n^+$, whereas the inclusions $\{n\}\subset I$, $I\in \sq^{n+}_0$, define the morphisms $\beta_I:\{n\}\to i_n^+(I)$. For each $\sX\in \sM^{\sq^{n+1}_0}$, we thus have the  diagram in $\sM$
\[
\xymatrix{
\holim_{\sq^{n}_0}i_{n*}^-\sX\ar[r]^{\alpha_n}&\holim_{\sq^{n}_0}i_{n*}^+\sX\\
&\sX(\hbox{$\{n\}$})\ar[u]_{\beta_*}}
\]
This diagram defines a functor  
\[
{\holim}^{+,-}_{n+1}:\sM^{\sq^{n+1}_0}\to \sM^{\sq^{2}_0}
\]
and we have a natural isomorphism in $\sM$
\[
\holim_{\sq^{n+1}_0}\sX\cong\holim_{\sq^{2}_0}{\holim}^{+,-}_{n+1}(\sX).
\]
In case $\sX(\{n\})=\pt$, we have the natural isomorphisms
\begin{equation}\label{eqn:SqInduction}
\holim_{\sq^{n+1}_0}\sX\cong\holim_{\sq^{2}_0}{\holim}^{+,-}_{n+1}(\sX)\cong \hofib(\alpha_n:\holim_{\sq^{n}_0}i_{n*}^-\sX\to
\holim_{\sq^{n}_0}i_{n*}^+\sX).
\end{equation}

Let   $\rho^+_n:\sq^n\to \sq^{n+1}_0$ be the functor $\rho^+_n(I):=I\cup\{n+1\}$, giving the restriction functor
\[
\rho^+_{n*}:\sM^{\sq_0^{n+1}}\to \sM^{\sq^n}
\]
and the left adjoint $\rho_n^{+*}:\sM^{\sq^n}\to \sM^{\sq_0^{n+1}}$. Explicitly, 
for $\sX\in \sM^{\sq^n}$,  $\rho_n^{+*}\sX\in \sM^{\sq_0^{n+1}}$ is given  by $\rho_n^{+*}\sX(\rho^+_n(I))=\sX(I)$ and $\rho_n^{+*}\sX(J)=\pt$ for $J\subset \{1,\ldots, n\}$.

Similarly, we have a decomposition $\tilde{i}_n^\pm$ of $\sq^n$ into two $(n-1)$-cubes, with $\tilde{i}_n^-(I)=I$, 
$\tilde{i}_n^+(I)=I\cup\{n\}$ and natural transformation $\tilde\alpha_n:\tilde{i}_n^-\to \tilde{i}_n^+$, as for the punctured $(n+1)$-cube. We define the {\em iterated homotopy fiber} functor $\hofib^n:\sM^{\sq^n}\to \sM$ inductively by  
\[
\hofib^n(\sX)=\hofib\left(\hofib^{n-1}(\tilde{\alpha}_n):\hofib^{n-1}(\tilde{i}_n^-(\sX))\to \hofib^{n-1}(\tilde{i}_n^+(\sX))\right). 
\]
Using the isomorphism \eqref{eqn:SqInduction} and induction, we arrive at a natural isomorphism
\begin{equation}\label{eqn:IteratedHofib}
\hofib^n(\sX)\cong \holim_{\sq_0^{n+1}}\rho_n^{+*}(\sX).
\end{equation}

\begin{ex} \label{ex:ANCube} We let $\sM_0$ be one of the model categories discussed below in  \eqref{eqn:ModelCats}   and apply the above results to the stable model category $\sM$ of symmetric $T$-spectra $\Spt^{\Sigma,\sM_0}_T$, with $T=S^1$ or some model of $\P^1$. Let $\sE$ be a commutative monoid in $\Spt^{\Sigma,\sM_0}_T$. Form the cosimplicial (symmetric) spectrum $n\mapsto \sE^{\wedge n+1}$, with coface maps given by the appropriate unit maps and codegeneracies by  multiplication maps. Letting $\tilde\sE^{\wedge *+1}$ be a fibrant model,   lemma~\ref{lem:Truncation} and proposition~\ref{prop:SimpVCube} give us the isomorphisms in $\Ho\Spt^{\Sigma,\sM_0}_T\cong \Ho \Spt^{\sM_0}_T$
\begin{equation}\label{eqn:TotCubeIso}
\Tot_{(n)}\tilde\sE^{\wedge *+1}\cong \holim_{\Delta^{\le n}}\iota_{n*}\sE^{\wedge *+1}\cong \holim_{\sq^{n+1}_0}\phi^{n+1}_0\sE^{\wedge *+1},
\end{equation}
where we write $\phi^{n+1}_0\sE^{\wedge *+1}$ for $\phi^{n+1}_{0*}\iota_{n*}\sE^{\wedge *+1}$.

Let $\mS\in \Spt^{\Sigma,\sM_0}_T$ be the unit. We have the map $\mS\cong \Tot_nc\mS\to \Tot_n\tilde\sE^{\wedge *+1}$, induced by the unit map $c\mS\to \tilde\sE^{\wedge *+1}$. Letting $\bar\sE$ be the homotopy cofiber of the unit map $\mS\to \sE$, we claim there is a natural isomorphism in  $\Ho\Spt^{\Sigma,\sM_0}_T$
\[
\Omega^n\bar\sE^{\wedge n+1}\cong \hocofib(\mS\to \Tot_{(n)}\tilde\sE^{\wedge *+1}).
\]
Indeed, let $[\mS\to\sE]^{\wedge n+1}$ be the evident $(n+1)$-cube in spectra: $I\mapsto \sE^{\wedge |I|}$. The distinguished triangle $\mS\to \sE\to \bar\sE\to \mS[1]$ and isomorphism \eqref{eqn:IteratedHofib} give
the isomorphism
\[
\Omega^{n+1}\bar\sE^{\wedge n+1}\cong \holim_{\sq_0^{n+2}}\rho_{n+1}^{+*}[\mS\to\sE]^{\wedge n+1}
\]
in $\Ho\Spt^{\Sigma,\sM_0}_T$.  On the other hand, fill in the punctured $(n+1)$-cube $\phi^{n+1}_0\sE^{\wedge *+1}$ to an $(n+1)$-cube $\tilde\phi^{n+1}_0\sE^{\wedge *+1}$ by inserting $\pt$ at the entry $\0$, and similarly extend $\mS$ to an $(n+1)$-cube $\tilde\mS$ with value $\mS$ at $\0$ and value $\pt$ at $I\neq\0$. This gives us the homotopy fiber sequence in $(\Spt^{\Sigma,\sM_0}_T)^{\sq_0^{n+2}}$
\begin{equation}\label{eqn:HoFibSeq}
\rho_{n+1}^{+*}\tilde\phi^{n+1}_0\sE^{\wedge *+1}\to \rho_{n+1}^{+*}[\mS\to\sE]^{\wedge n+1}\to \rho_{n+1}^{+*}\tilde\mS.
\end{equation}

Using the isomorphism \eqref{eqn:IteratedHofib} gives us the isomorphisms (in $\Ho\Spt^{\Sigma,\sM_0}_T$)
\begin{align*}
&\Omega\holim_{\sq^{n+1}_0}\phi^{n+1}_0\sE^{\wedge *+1}\cong \holim_{\sq^{n+2}_0}\rho_{n+1}^{+*}\tilde\phi^{n+1}_0\sE^{\wedge *+1}\\
&\Omega^{n+1}\bar\sE^{\wedge n+1}\cong \holim_{\sq^{n+2}_0} \rho_{n+1}^{+*}[\mS\to\sE]^{\wedge n+1}\\
&\mS\cong \holim_{\sq^{n+2}_0}\tilde\mS.
\end{align*}
Thus, applying $\holim_{\sq^{n+2}_0}$ to the homotopy fiber sequence \eqref{eqn:HoFibSeq} gives us the distinguished triangle in $\Ho\Spt^{\Sigma,\sM_0}_T$
\[
\Omega\holim_{\sq^{n+1}_0}\phi^{n+1}_0\sE^{\wedge *+1}\to \Omega^{n+1}\bar\sE^{\wedge n+1}\to \mS\to\holim_{\sq^{n+1}_0}\phi^{n+1}_0\sE^{\wedge *+1},
\]
which  combined with \eqref{eqn:TotCubeIso} yields the desired result.
\end{ex}

We consider the case of $\sE=\MGL$ in $\Spt^\Sigma_T(S)$. For the construction of $\MGL$ we refer the reader to \cite{VoevICM}; for the structure as a symmetric monoidal object in $\Spt^\Sigma_T(S)$, we cite \cite[\S 2.1]{PPRMGL}. Applying the above example, we have the distinguished triangle in $\SH(S)$
\begin{equation}\label{eqn:CubeDistTriang}
 \mS_S\xrightarrow{i_n}\holim_{\sq^{n+1}_0}\phi^{n+1}_0\MGL^{\wedge *+1}\to  \Omega^n\overline{\MGL}^{\wedge n+1}\to\mS_S[1].
\end{equation}
Since $f_m^t$ is an exact functor and $\sq^{n+1}_0$ is a finite category, we have the isomorphism in $\SH(S)$
\[
\holim_{\sq^{n+1}_0}f_m^{t, \sq^{n+1}_0}\phi^{n+1}_0\MGL^{\wedge *+1}\cong f_m^t\holim_{\sq^{n+1}_0}\phi^{n+1}_0\MGL^{\wedge *+1}
\]
\begin{prop}\label{prop:SliceComp} 1. The morphism $i_n$ induces an isomorphism
\[
f^t_{m/N}\mS_S\to \holim_{\sq^{n+1}_0}f_{m/N}^{t, \sq^{n+1}_0}\phi^{n+1}_0\MGL^{\wedge *+1}
\]
for all $m\le N\le n+1$. \\
2. There is a natural isomorphism
\[
\xi_{m/N,n }:f^t_{m/N}\mS_S\to \Tot_{(n)}f^{t, \Delta}_{m/N}\MGL^{\wedge *+1}
\]
for $m\le N\le n+1$, compatible with the maps in the $\Tot_{(n)}$-tower (for fixed $m, N$ and varying $n$)  and the maps in the slice tower (for fixed $n$ and varying $m, N$).
\end{prop}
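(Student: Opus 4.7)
The plan is to apply the exact endofunctor $f^t_{m/N}$ to the distinguished triangle \eqref{eqn:CubeDistTriang} and to show that the $\overline{\MGL}$-term is annihilated when $N \le n+1$. The key connectivity input is that
\[
\overline{\MGL} := \hocofib(\mS_S \to \MGL)
\]
lies in slice-filtration degree $\ge 1$, i.e., $f^t_1 \overline{\MGL} \xrightarrow{\sim} \overline{\MGL}$. Using the fiber triangle $f^t_1 \overline{\MGL} \to \overline{\MGL} \to s^t_0 \overline{\MGL}$, this reduces to $s^t_0 \overline{\MGL} = 0$, i.e., to the statement that the unit $\mS_S \to \MGL$ induces an isomorphism on the zeroth slice. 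Over a perfect base field this follows by combining the Hopkins-Morel-Hoyois computation $s^t_0 \MGL \cong M\ZZ$ with Levine's identification $s^t_0 \mS_S \cong M\ZZ$ recalled in the introduction, both realized by the unit. Compatibility of the slice filtration with smash products then yields $\overline{\MGL}^{\wedge n+1}$ in slice-filtration degree $\ge n+1$. For $m \le N \le n+1$, both $f^t_m$ and $f^t_N$ act as the identity on $\overline{\MGL}^{\wedge n+1}$, so their cofiber $f^t_{m/N}$ annihilates it; and since $\Omega_{S^1}$ is invertible in $\SH(S)$ and commutes with $f^t_{m/N}$, the same vanishing holds after applying $\Omega^n$.

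For (1), I observe that $f^t_{m/N}$ is an exact endofunctor of $\SH(S)$ and $\sq^{n+1}_0$ is a finite category, so $f^t_{m/N}$ commutes with the homotopy limit $\holim_{\sq^{n+1}_0}$; moreover, by the construction in section~\ref{sec:Diagram}, the functor-category slice $f^{t,\sq^{n+1}_0}_{m/N}$ agrees pointwise with $f^t_{m/N}$ under the evaluation functors, cf.\ lemma~\ref{lem:FunctBous}. Applying $f^t_{m/N}$ to \eqref{eqn:CubeDistTriang} and using the vanishing above yields the isomorphism asserted in (1). For (2), I would chain this isomorphism with the natural weak equivalences
\[
\holim_{\sq^{n+1}_0} \phi^{n+1}_{0*}\iota_{n*}(-) \xleftarrow{\sim} \holim_{\Delta^{\le n}} \iota_{n*}(-) \xrightarrow{\sim} \Tot_{(n)}(-)
\]
of proposition~\ref{prop:SimpVCube} and lemma~\ref{lem:Truncation}, applied to a Reedy-fibrant replacement of $f^{t,\Delta}_{m/N} \MGL^{\wedge *+1}$. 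Compatibility of $\xi_{m/N,n}$ with the $\Tot_{(n)}$-tower (as $n$ varies) and with the slice-tower maps (as $m, N$ vary) is immediate from the naturality of every construction involved.

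I expect the main technical obstacle to be the connectivity input in Step~1 — the isomorphism $s^t_0 \mS_S \xrightarrow{\sim} s^t_0 \MGL$ induced by the unit — which I would treat as a citable black box depending on the Hopkins-Morel-Hoyois theorem and Levine's slice computation, rather than reproving the argument here. A subsidiary delicate point is verifying that the equivalences from proposition~\ref{prop:SimpVCube} and lemma~\ref{lem:Truncation} are sufficiently natural in the cosimplicial variable to yield the claimed tower compatibilities; this should follow formally since all the identifications arise from restriction along fixed functors between Reedy categories.
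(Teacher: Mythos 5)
Your proposal is correct and follows essentially the same route as the paper: reduce to the vanishing of $f^t_{m/N}$ on $\Omega^n\overline{\MGL}^{\wedge n+1}$ via $s^t_0\overline{\MGL}=0$, effectivity, and compatibility of the slice filtration with smash products, then apply $f^t_{m/N}$ to the triangle \eqref{eqn:CubeDistTriang} for (1), and chain with lemma~\ref{lem:Truncation} and proposition~\ref{prop:SimpVCube} for (2). The only point the paper makes slightly more explicit is the Reedy fibrancy of $\iota_{n*}f^{t,\Delta}_{m/N}\MGL^{\wedge *+1}$ and the commutation $\iota_{n*}f^{t,\Delta}_{m/N}\cong f^{t,\Delta^{\le n}}_{m/N}\iota_{n*}$, which you implicitly cover by invoking a fibrant replacement and lemma~\ref{lem:FunctBous}.
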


\begin{proof} The map $\mS_S\to \MGL$ induces an isomorphism $s^t_0\mS_S\to s^t_0\MGL$, and hence $s^t_0\overline{\MGL}=0$. As both $\mS_S$ and $\MGL$ are in $\SH^\eff(S)$, it follows that $f_1^t\overline{\MGL}=\overline{\MGL}$, and thus $f_{n+1}^t \Omega^n\overline{\MGL}^{\wedge n+1}\cong
 \Omega^n\overline{\MGL}^{\wedge n+1}$. From this follows 
 \[
 f_{m/N}^t \Omega^n\overline{\MGL}^{\wedge n+1}=0\text{ for }m\le N\le n+1.
 \]
Applying $f_{m/N}$ to the distinguished triangle \eqref{eqn:CubeDistTriang} completes the proof of (1).

For (2), the restriction $\iota_{n*}f^{t, \Delta}_{m/N}\MGL^{\wedge *+1}$ is fibrant in $\Spt^\Sigma_{\P^1}(S)^{\Delta^{\le n}}$ since by construction
 $f^{t, \Delta}_{m/N}\MGL^{\wedge *+1}$ is a fibrant object in $\Spt^\Sigma_{\P^1}(S)^\Delta$. In addition, we have an isomorphism in $\Ho\Spt^\Sigma_{\P^1}(S)^{\Delta^{\le n}}$
\[
\iota_{n*}f^{t, \Delta}_{m/N}\MGL^{\wedge *+1}\cong f^{t, \Delta^{\le n}}_{m/N} \iota_{n*}\MGL^{\wedge *+1}.
\]
Thus, by lemma~\ref{lem:Truncation},  we have a canonical isomorphism in $\SH(S)$
\[
\holim_{\Delta^{\le n}}f_{m/M}^{t, \Delta^{\le n}}\iota_{n*}\MGL^{\wedge *+1}\cong \Tot_{(n)}f^{t, \Delta}_{m/N}\MGL^{\wedge *+1}
\]
Similarly, by  proposition~\ref{prop:SimpVCube}, we have the isomorphism
\[
\holim_{\Delta^{\le n}}f_{m/M}^{t, \Delta^{\le n}}\iota_{n*}\MGL^{\wedge *+1}\cong \holim_{\sq^{n+1}_0}f_{m/M}^{t, \sq^{n+1}_0}\phi^{n+1}_0\MGL^{\wedge *+1}
\]
in $\SH(S)$; together with (1), these isomorphisms yield (2).
\end{proof}

\begin{rem} For the truncation functors $f_n^{\sS, t}$ and the Betti realization functor $\Re_B^\sS$, we have been using the projective model structure on the functor category, while for the $\Tot$-tower, we use the Reedy model structure. To pass from one situation to the other, we use lemma~\ref{lem:Truncation}  and proposition~\ref{prop:SimpVCube} to replace the $\Tot_{(n)}$-construction with a homotopy limit over the punctured $(n+1)$-cube. As  $\sq^{n+1}_0$ is a direct category, the Reedy model structure agrees with the projective model structure, so we may apply all these constructions freely. Besides the finiteness of  $\sq^{n+1}_0$, this is another reason why we pass from cosimplicial objects to cubes.
\end{rem}

\section{D\'ecalage}\label{sec:Decalage}  Deligne's d\'ecalage operation \cite[(1.3.3)]{HodgeII}  constructs a new filtration $\Dec F$ on a complex $K$  from a given filtration $F$ on  $K$; this change of filtration has the effect of accelerating the spectral sequence associated to the filtered complex $K$. Here we replace the filtered complex $K$ with a cosimplicial spectrum object together with the tower $\Tot^{(*)}$. The tower replacing $\Dec F$ turns out to arise from a suitable Postnikov tower, where the $n$th term is formed by applying the functor of the $(n-1)$-connected cover termwise to the given cosimplicial object and then applying $\Tot$. Our main result in this section is an analog of Deligne's comparison of the spectral sequences for $(K,F)$ and $(K, \Dec F)$ \cite[proposition 1.3.4]{HodgeII}.

For the application of this construction to the comparison of the slice and Adams-Novikov spectral sequence, we need only consider the model categories of simplicial sets and suspension spectra. However, with an eye to possible future applications, we will present this section in a somewhat more general setting. We were not able to formulate a good axiomatic description of the appropriate setting for this construction, rather, we give a list of examples, which we hope will cover enough ground to be useful.

We take $\sM_0$ to be one of the following pointed  closed symmetric monoidal simplicial model categories:
\begin{equation}\label{eqn:ModelCats}
\vbox{
\begin{enumerate}
\item $\Spc_\bullet$,  the category of pointed simplicial sets, with the usual model structure
\item Take $\sC$ to be a small category, $\tau$ a Grothendieck topology on $\sC$  and $\sM_0$ the category of $\Spc_\bullet$-valued presheaves on $\sC$ with the injective model structure (localized for the topology $\tau$). 
\item $B=\Spec k$, with $k$ a perfect field, $\sC=\Sm/B$, the category of smooth quasi-projective $B$-schemes and $\sM_0$ the category $\Spc_\bullet(B)$ with the motivic model structure, that is, the left Bousfield  localization of example (2) with $\sC=\Sm/B$, $\tau$ the Nisnevich topology, and the localization with respect to maps $\sX\wedge(\A^1,0)\to \pt$.  
\end{enumerate}}
\end{equation}
We note that these are all cofibrantly generated, cellular and combinatorial model categories. In case (2), we recall that the weak equivalences are given via the $\tau$-homotopy sheaves $\pi_n^\tau(\sX)$, this being the $\tau$-sheaf associated to the presheaf $U\mapsto [\Sigma^nU_+, \sX]_{\Ho\sM}$, and in case (3), the weak equivalences are given via the $\A^1$-homotopy sheaves $\pi_n^{\A^1}(\sX)$, these being similarly defined as the Nisnevich sheaf associated to the presheaf  $U\mapsto [\Sigma^nU_+, \sX]_{\Ho\sM}$.

For the stable model categories $\sM:=\Spt_T\sM_0$ we will   use the model structure induced from $\sM_0$ by the construction given in \cite[chapter 7]{Hovey}. We take in case (1) $T=S^1$, giving us the category of suspension spectra, with weak equivalences the stable weak equivalences. In (2), we take again the category of suspension spectra, where now $T=S^1$ acts through the simplicial structure. We assume that the weak equivalences are the stable weak equivalences, that is, maps that induce an isomorphism on the stable homotopy sheaves $\pi_n^s(\sE):=\colim_N\pi_{n+N}^\tau(\sE_N)$ if $\sE=(\sE_0, \sE_1,\ldots)$. 
In case (3), we may take $T=S^1$, giving the category of $S^1$-spectra $\Spt_{S^1}(B)$ or for the \'etale version $\Spt_{S^1}^\et(B)$. Here the weak equivalences are the stable weak equivalences, using the $\A^1$ homotopy sheaves $\pi_n^{\A^1}$ in place  of $\pi_n^\tau$. These are all cofibrantly generated, cellular, combinatorial stable simplicial $\sM_0$ model categories. If  at some point we require the stable category to have a symmetric monoidal model category structure, we will replace the spectrum category with symmetric spectra. 

In all cases, one has for $\sX$ homotopy objects $\pi_n(\sX)$, $n=0, 1,\ldots$, with $\pi_n$ an abelian group object for $n\ge2$, and a group object for $n=1$, so  that the $\{\pi_n, n\ge0\}$ detects weak equivalences, a loops functor $\sX\to \Omega\sX$ with $\pi_n(\Omega\sX)=\pi_{n+1}(\sX)$, so that a homotopy fiber sequence induces a long exact sequence in the $\pi_n$ in the usual extended sense, a functorial (left) Postnikov tower
\[
\ldots\to f_{n+1}\sX\to f_n\sX\to \ldots\to f_0\sX=\sX
\]
with $f_n\sX\to \sX$ inducing an isomorphism on $\pi_m$ for $m\ge n$ and with $\pi_m f_n\sX=\{*\}$ for $m<n$.  Furthermore,  for an integer $n\ge2$, there is an Eilenberg-MacLane space $K(A, n)$ associated to an abelian group (in case (1)) or $\tau$-sheaf of abelian groups (in case (2)) or strictly $\A^1$-invariant sheaf of abelian groups (in case (3)), which is determined up to unique isomorphism  in $\Ho\sM$ by the vanishing of $\pi_mK(A,n)$ for $m\neq n$ and the choice of an isomorphism $A\cong \pi_n K(A,n)$. 

For the spectrum categories, stabilizing the $\pi_n$ gives the collection of stable homotopy objects $\{\pi_n, n\in\ZZ\}$ which detect weak equivalences and which are abelian group objects for all $n$,  one has a functorial   (left) Postnikov tower
\[
\ldots\to f_{n+1}\sE\to f_n\sE\to\ldots\to  \sE
\]
and Eilenberg-MacLane spectrum $EM(A, n)$ for $A$ an abelian group object as above, and $n\in \ZZ$. 

In the sequel, we will treat all  these cases simultaneously; we will usually not need to distinguish between the stable and unstable setting, and will refer to the model category at hand as $\sM$, whether stable or unstable. We will retain the notation $K(A, n)$ for the Eilenberg-MacLane space in the unstable setting, and write $K(A, n)$ for the Eilenberg-MacLane spectrum $EM(A, n)$ in the stable case.

We apply the Postnikov tower construction in functor categories, as described in example~\ref{exs:FunctorialConstructions}(1), to  an object $\sX\in \sM^\Delta$, giving the cosimplicial object $f_n\sX\in \sM^\Delta$:
\[
f_n\sX:=[m\mapsto f_n\sX^m] 
\]
and the resulting tower
\[
\ldots\to f_{n+1}\sX\to f_n\sX\to\ldots\to \sX. 
\]
As the notation suggests, this tower has the property that evaluation at some $[m]\in\Delta$ yields the Postnikov tower for $\sX^m$. 

We will assume that we have a double delooping $\sY$ of $\sX$, that is, a weak equivalence  $\sX\to \Omega^2\sY$ in $\Ho\sM^\Delta$; we will simply replace $\sX$ with $\Omega^2\sY$, so we may assume that this weak equivalence is an identity. This assumption is of course fulfilled for all $\sX$ if we are in the stable case. We let $\sZ=\Omega\sY$ and use $\sZ$ as a chosen delooping of $\sX$.

\begin{Def}\label{Def:DecTowerMain} Fix an integer $A$ and an extended integer $B$, with $0\le A<B\le \infty$. Let $\sX$ be in $\sM^\Delta$. Applying the functor $\Tot^{(A/B)}$ to the Postnikov tower for $\sX$ gives the {\em  tower d\'ecal\'e} of spaces
 \begin{equation}\label{eqn:DecTower} 
\ldots\to \Tot^{(A/B)}(f_{n+1}\sX) \to \Tot^{(A/B)}(f_n\sX)\to\ldots\to  \Tot^{(A/B)}(\sX)
\end{equation}
\end{Def}

Using our chosen delooping $\sX= \Omega\sZ$, let $f_{k/m}\sX:=\hofib(f_{m+1}\sZ\to f_{k+1}\sZ)$.  Since $f_n\circ \Omega$ is naturally isomorphic to $\Omega\circ f_{n+1}$ as natural transformations to $\Ho\sM$,  the homotopy fiber sequence
\[
 \Omega f_{m+1}\sZ\to \Omega f_{k+1}\sZ\to  \hofib(f_{m+1}\sZ\to f_{k+1}\sZ)
\]
gives us the homotopy fiber sequence
\[
f_m\sX\to f_k\sX\to f_{k/m}\sX.
\]
We have as well induced delooping for $f_m\sX$, namely, $f_{m+1}\sZ$.

The tower \eqref{eqn:DecTower}  gives rise to the spectral sequence
\begin{equation}\label{eqn:DecTowerSS}
E_1^{p,q}(\Dec, \sX)=\pi_{-p-q}\Tot^{(A/B)} f_{(p/p+1)}\sX\Longrightarrow \pi_{-p-q}\Tot^{(A/B)}\sX
\end{equation}
for $0\le A<B\le \infty$.

The constructions $f_q$ and $\Tot^{(m/k)}$  are strictly functorial  and preserve homotopy fiber sequences.
Thus, we have the commutative diagram of natural transformations (for $0\le m< N\le\infty$, $0\le p$)
\[
\xymatrix{
\Tot^{(m+1/N)}(f_{p+1}(-))\ar[r]\ar[d]&\Tot^{(m+1/N)}(f_p(-))\ar[d]\\
\Tot^{(m/N)}(f_{p+1}(-))\ar[r]&\Tot^{(m/N)}(f_p(-))}
\]
and the homotopy fiber sequence
\[
\Tot^{({a}/{N})}(f_{p+1}(\sX))\to \Tot^{({a}/{N})}(f_p(\sX))\to \Tot^{({a}/{N})}(f_{{p}/{p+1}}(\sX)).
\]
for $0\le a\le N$.

The operation $f_{k/m}$  is also functorial and preserves fiber sequences, except that these are in terms of the chosen delooping $\sZ$ for $\sX$.  

Using the chosen delooping $\sZ$,  we define $\Tot^{(m/m+1)}_{p/p+1}(\sX)$ to be the homotopy fiber of the map 
\[
\Tot^{(m+1/N)}(f_{p+2}(\sZ))\to
\Tot^{(m/N)}(f_{p+1}(\sZ)).
\]
Note that  $\Tot^{(m/m+1)}_{p/p+1}(\sX)$ is, as the notation suggests, independent of the choice of $N$ (up to weak equivalence).

As  $\Tot^{(a/b)}$ commutes with $\Omega$, the homotopy fiber sequence
\[
\Omega\Tot^{(m+1/N)}(f_{p+2}(\sZ))\to
\Omega\Tot^{(m/N)}(f_{p+1}(\sZ))\to \Tot^{(m/m+1)}_{p/p+1}(\sX)
\]
yields the homotopy fiber sequence
\[
\Tot^{(m+1/N)}(f_{p+1}(\sX))\to
\Tot^{(m/N)}(f_{p}(\sX))\to \Tot^{(m/m+1)}_{p/p+1}(\sX).
\]

We have maps 
\begin{align*}
\&alpha:\Tot^{(m/m+1)}_{p/{p+1}}(\sX)\to \Tot^{(m/N)}(f_{p/p+1}(\sX)),\\
&\beta: \Tot^{(m/m+1)}_{p/{p+1}}(\sX)\to \Tot^{(m/m+1)}(f_p(\sX)),
\end{align*}
defined by taking the induced maps on the homotopy fibers of the horizontal maps   in the commutative diagrams
\begin{equation}\label{eqn:Diag1}
\xymatrix{
\Tot^{(m+1/N)}(f_{p+2}(\sZ))\ar[r]\ar[d]&\Tot^{(m/N)}(f_{p+1}(\sZ))\ar@{=}[d]\\
\Tot^{(m/N)}(f_{p+2}(\sZ))\ar[r]&\Tot^{(m/N)}(f_{p+1}(\sZ))}
\end{equation}

\begin{equation}\label{eqn:Diag2}
\xymatrix{
\Tot^{(m+1/N)}(f_{p+2}(\sZ))\ar[r]\ar[d]&\Tot^{(m/N)}(f_{p+1}(\sZ))\ar@{=}[d]\\
\Tot^{(m+1/N)}(f_{p+1}(\sZ))\ar[r]&\Tot^{(m/N)}(f_{p+1}(\sZ)).
}
\end{equation}
Via the homotopy fiber sequences
\begin{gather*}
\Tot^{(m/m+1)}(f_{p+1}(\sX))\to \Tot^{(m+1/N)}(f_{p+2}(\sZ))\to \Tot^{(m/N)}(f_{p+2}(\sZ))\\
\Tot^{(m+1/N)}(f_{p/p+1}(\sX))\to \Tot^{(m+1/N)}(f_{p+2}(\sZ))\to \Tot^{(m+1/N)}(f_{p+1}(\sZ)),
\end{gather*}
the Quetzalcoatl lemma gives the isomorphisms
\begin{equation}\label{eqn:HofFib}
\hofib(\alpha)\cong \Tot^{(m/m+1)}(f_{p+1}(\sX)),\ \hofib(\beta)\cong \Tot^{(m+1/N)}(f_{p/p+1}(\sX)).
\end{equation}
Putting all these together gives us the commutative diagram 
\begin{equation}\label{eqn:BigDiagr}
\xymatrixrowsep{12pt}
\xymatrixcolsep{8pt}
\xymatrix{
\Tot^{({m+1}/{N})}(f_{p+1}(\sX))\ar[r]\ar[d]\ar[rd]&\Tot^{({m+1}/{N})}(f_p(\sX))\ar[d]\ar[r]&\Tot^{({m+1}/{N})}(f_{{p}/{p+1}}(\sX))\ar[d]\\
\Tot^{({m}/{N})}(f_{p+1}(\sX))\ar[r]\ar[d]&\Tot^{({m}/{N})}(f_p(\sX))\ar[d]\ar[r]\ar[rd]&\Tot^{({m}/{N})}(f_{{p}/{p+1}}(\sX))\\
\Tot^{({m}/{m+1})}(f_{p+1}(\sX))\ar[r]&\Tot^{({m}/{m+1})}(f_p(\sX))&\Tot^{({m}/{m+1})}_{{p}/{p+1}}(\sX)\ar[l]^-\beta\ar[u]_\alpha
}
\end{equation}
with top two rows, the two left-hand columns and the diagonal all homotopy fiber sequences. 

\begin{lem}\label{lem:Iso} Let $p,q$ be integers with $p\ge0$, and $-2p \le q\le -p$. Take $N$ such that $2p+q+1\le N\le \infty$,  and consider the diagram extracted from \eqref{eqn:BigDiagr} with $m=2p+q$:
\[
\xymatrixrowsep{12pt}
\xymatrixcolsep{8pt}
\xymatrix{
&\pi_{-p-q}\Tot^{(m/{N})}(f_{{p}/{p+1}}(\sX))\\
\pi_{-p-q}\Tot^{({m}/{m+1})}(f_p(\sX))&\pi_{-p-q}\Tot^{({m}/{m+1})}_{{p}/{p+1}}(\sX)\ar[l]^-\beta\ar[u]_\alpha
}
\]
Then the map $\alpha$ is an isomorphism  and  the map $\beta$ is injective.
\end{lem}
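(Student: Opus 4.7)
The strategy is to invoke the identifications \eqref{eqn:HofFib}, namely $\hofib(\alpha)\simeq \Tot^{(m/m+1)}(f_{p+1}\sX)$ and $\hofib(\beta)\simeq \Tot^{(m+1/N)}(f_{p/p+1}\sX)$, and then to extract $\alpha$ and $\beta$ from the long exact sequences of the corresponding fibrations. This reduces the lemma to establishing the vanishings $\pi_{-p-q}\hofib(\alpha)=\pi_{-p-q-1}\hofib(\alpha)=0$ (for $\alpha$ to be iso on $\pi_{-p-q}$) and $\pi_{-p-q}\hofib(\beta)=0$ (for $\beta$ to be injective on $\pi_{-p-q}$).

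For $\alpha$, I would use lemma~\ref{lem:FiberSeq1} to identify $\Tot^{(m/m+1)}(f_{p+1}\sX)\simeq \Omega^m N(f_{p+1}\sX)^m$ in $\Ho\sM$. Since $f_{p+1}(\sX^m)$ is $p$-connected by construction of the Postnikov tower, the inclusion $\pi_j N(f_{p+1}\sX)^m\hookrightarrow \pi_j f_{p+1}\sX^m$ from the final clause of that lemma shows that $N(f_{p+1}\sX)^m$ is itself $p$-connected. Consequently $\pi_n \Omega^m N(f_{p+1}\sX)^m=0$ for all $n\le p-m=-p-q$. Both of the relevant degrees $-p-q$ and $-p-q-1$ lie in this range, and the long exact sequence of the fibration $\hofib(\alpha)\to \Tot^{(m/m+1)}_{p/p+1}\sX\to \Tot^{(m/N)}(f_{p/p+1}\sX)$ then forces $\alpha$ to be an isomorphism on $\pi_{-p-q}$.

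For $\beta$, the key observation is that $f_{p/p+1}\sX$ is layer-wise an Eilenberg--MacLane object with homotopy concentrated in degree $p$, so by \eqref{eqn:LayerIso} each Tot-layer $\Tot^{(p'/p'+1)}(f_{p/p+1}\sX)\simeq \Omega^{p'}N(f_{p/p+1}\sX)^{p'}$ has homotopy concentrated in the single degree $p-p'$. In the spectral sequence \eqref{eqn:TowerSS2} computing $\pi_*\Tot^{(m+1/N)}(f_{p/p+1}\sX)$, the $E_1$-page is therefore supported in the single row $q'=-p$, and a contribution to $\pi_{-p-q}$ could only come from $p'=p-(-p-q)=m$, which lies strictly below the cutoff $p'\ge m+1$. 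Hence $\pi_{-p-q}\Tot^{(m+1/N)}(f_{p/p+1}\sX)=0$, and the long exact sequence of $\hofib(\beta)\to \Tot^{(m/m+1)}_{p/p+1}\sX\to \Tot^{(m/m+1)}(f_p\sX)$ yields the injectivity of $\beta$ on $\pi_{-p-q}$.

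The main point requiring care is strong convergence of the spectral sequence for $\Tot^{(m+1/\infty)}$ when $N=\infty$, since $f_{p/p+1}\sX$ need not satisfy condition (2) of \eqref{eqn:ConvCond} or \eqref{eqn:SpectrumConvCond}. However, the single-row concentration of $E_1$ makes each total degree receive a contribution from at most one $p'$, eliminating any extension problems, so the vanishing of the relevant $E_\infty$ entries forces the claimed vanishing of $\pi_{-p-q}$ in the limit. Alternatively, I would handle finite $N$ by induction via the fiber sequence $\Tot^{(N/N+1)}\to \Tot^{(m+1/N+1)}\to \Tot^{(m+1/N)}$, starting from the trivial base $\Tot^{(m+1/m+1)}=\pt$ and noting that the outer layer contributes to $\pi_{-p-q}$ only when $N=m$ (which is excluded), and then pass to $N=\infty$ through the fibration $\Tot^{(m+1)}\to \Tot\to \Tot_{(m)}$, where the induced map on $\pi_{-p-q}$ is identified with the evident injection between cohomology groups of the normalised complex $N\pi_p\sX^*$.
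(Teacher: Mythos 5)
Your proof is correct and follows essentially the same route as the paper's: both arguments rest on the fiber identifications \eqref{eqn:HofFib}, prove the $\alpha$-statement from the connectivity of the layer $\Tot^{(m/m+1)}$ of a $p$-connected Postnikov cover via lemma~\ref{lem:FiberSeq1}, and prove the $\beta$-statement by observing that $f_{p/p+1}(\sX)$ is a cosimplicial Eilenberg--MacLane object whose truncated $\Tot$-tower computes the normalized complex $N\pi_p\sX^*$ in a range of degrees excluding $p+q$ (your collapsing one-row spectral sequence is exactly the paper's direct cohomology computation, and it equally disposes of the $N=\infty$ convergence worry). The only cosmetic difference is that the paper runs the $\alpha$-argument on the delooping $f_{p+2}(\sZ)$, so that the required vanishing sits in degrees $-p-q$ and $-p-q+1$ instead of your $-p-q$ and $-p-q-1$; this matters only in the unstable boundary case $-p-q=0$, where the standing hypothesis $\sX\cong\Omega^2\sY$ supplies the delooping needed to extend the fiber sequence to the right and make your surjectivity step literal.
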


\begin{proof} Let us first consider the map $\alpha$. The isomorphism \eqref{eqn:HofFib} gives us the homotopy fiber sequence
\begin{equation}\label{eqn:Seq1}
\Tot^{({m}/{m+1})}(f_{p+1}(\sX))\to\Tot^{({m}/{m+1})}_{{p}/{p+1}}(\sX)\xrightarrow{\alpha} \Tot^{({m}/{N})}(f_{{p}/{p+1}}(\sX)).
\end{equation}

Using the canonical isomorphism in $\Ho\sM^\Delta$,  $f_n(\Omega(\sT))\cong \Omega f_{n+1}(\sT)$ and the deloopings $\sX= \Omega\sZ$, $\sZ=\Omega\sY$, we may replace $\sX, \sZ$ with the pair $\sZ, \sY$ and identify the sequence \eqref{eqn:Seq1}  with $\Omega$ applied to the homotopy fiber sequence
\[
\Tot^{({m}/{m+1})}(f_{p+2}(\sZ))\to\Tot^{({m}/{m+1})}_{{p+1}/{p+2}}(\sZ)\xrightarrow{\alpha'} \Tot^{({m}/{N})}(f_{{p+1}/{p+2}}(\sZ)).
\]
From this, we see that  we may extend $\alpha$ to a  homotopy fiber sequence  
\[
\Tot^{({m}/{m+1})}_{{p}/{p+1}}(\sX)\xrightarrow{\alpha} \Tot^{({m}/{N})}(f_{{p}/{p+1}}(\sX))\to
\Tot^{({m}/{m+1})}(f_{p+2}(\sZ)).
\]
We have by lemma~\ref{lem:FiberSeq1}  
\[
\Tot^{({m}/{m+1})}(f_{p+2}(\sZ))\cong \Omega^{2p+q}Nf_{p+2}(\sZ^{2p+q}),
\]
hence $\pi_{-p-q+\epsilon}\Tot^{({m}/{m+1})}(f_{p+2}(\sZ))\cong N(\pi_{-p-q+\epsilon}(\Omega^{2p+q}f_{p+2}(\sZ^{2p+q})))$ is a subgroup of $\pi_{p+\epsilon}f_{p+2}(\sZ^{2p+q})$. Because
$\pi_{p+\epsilon}f_{p+2}(\sZ^{2p+q})$ is zero for $\epsilon=0,1$, $\alpha$ is an isomorphism.

For $\beta$, we have the homotopy fiber sequence
\[
\Tot^{({m+1}/{N})}(f_{{p}/{p+1}}(\sX))\to  \Tot^{({m}/{m+1})}_{{p}/{p+1}}(\sX)\xrightarrow{\beta} \Tot^{({m}/{m+1})}(f_{{p}/{p+1}}(\sX)).
\]

The cosimplicial object $f_{{p}/{p+1}}(\sX)$ is weakly equivalent to the cosimplicial Eilenberg-MacLane object
\[
n\mapsto K(\pi_{p}(\sX^n), p)
\]
hence $\pi_t\Tot^{({m+1}/{N})}(f_{{p}/{p+1}}(\sX))$ is the cohomology in degree $-t$ of the complex
\[
N\pi_{p}(\sX^{2p+q+1})\to N\pi_{p}(\sX^{2p+q+2})\to \ldots\to N\pi_{p}(\sX^{N-1}),
\]
concentrated in degrees $[p+q+1,N-p-1]$. So $\pi_{-p-q}\Tot^{({m+1}/{N})}(f_{{p}/{p+1}}(\sX))=0$ and $\beta$ is injective on $\pi_{-p-q}$.
\end{proof}

We consider the spectral sequences \eqref{eqn:TowerSS2} and \eqref{eqn:DecTowerSS} for $A=0$ and $0<B\le \infty$.  Take integers $p,q$ with $0\le -p$ and $0\le 2p+q<B$. We have
\[
E_1^{m,-p}(\sX)=N\pi_p\sX^m;
\]
the $E_1$-complex  $E_1^{*,-p}(\sX)$ is the (truncated) normalized complex (shifted to be supported in degrees $d$, $-p\le d\le B-p-1$)
\[
\sigma_{<B}N\pi_psX^*:=N\pi_p\sX^0\to \ldots\to N\pi_p\sX^{2p+q}\to N\pi_p\sX^{2p+q+1}\to \ldots\to N\pi_p\sX^{B-1}
\]
and $E_2^{2p+q,-p}=H^{p+q}(E_1^{*,-p}(\sX))$.

As $f_{p/p+1}(\sX)$ is weakly equivalent to the cosimplicial object
\[
m\mapsto K(\pi_p(\sX^m), p)
\]
it follows that $E_1^{p,q}(\Dec, \sX):=\pi_{-p-q}\Tot^{(0/B)}f_{p/p+1}(\sX)$ is $H^{p+q}$ of the complex (shifted to be supported in degrees $d$,  $-p\le d\le B-p-1$)
\[
\sigma_{<B}N\pi_p\sX^*:=N\pi_p\sX^0\to \ldots\to N\pi_p\sX^{2p+q}\to N\pi_p\sX^{2p+q+1}\to \ldots \to N\pi_p\sX^{B-1}.
\]
As this complex is equal to $E_1^{*,-p}(\sX)$, the identity maps on $N\pi_p\sX^*$ induce the isomorphism
\begin{equation}\label{eqn:DecE1}
\gamma_1^{p,q}:E_1^{p,q}(\Dec, \sX)\to E_2^{2p+q,-p}(\sX).
\end{equation}

\begin{prop} \label{prop:Dec}  Take $A=0$, $0<B\le \infty$. The maps \eqref{eqn:DecE1} give rise to an isomorphism of complexes
\[
\gamma_1^{*,q}:E_1^{*,q}(\Dec, \sX)\to E_2^{2*+q,-*}(\sX)
\]
and inductively to a sequence of isomorphisms  
\[
\gamma_r^{p,q}:E_r^{p,q}(\Dec, \sX)\to E_{r+1}^{2p+q,-p}(\sX)
\]
which give an isomorphism of complexes 
\[
\gamma_r^{*,*}:(\oplus_{p,q}E_r^{p,q}(\Dec, \sX), d_r)\to (\oplus_{p,q} E_{r+1}^{2p+q,-p}(\sX), d_{r+1})
\] 
for each $r\ge1$.
\end{prop}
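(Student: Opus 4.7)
The plan is to proceed by induction on $r \ge 1$. The base case, defining $\gamma_1^{p,q}$ via \eqref{eqn:DecE1}, is already in place; what remains is to verify that $\gamma_1$ intertwines $d_1(\Dec)$ with $d_2(\sX)$. Granted that $\gamma_r$ is an isomorphism of complexes (matching $d_r(\Dec)$ with $d_{r+1}(\sX)$), the next map $\gamma_{r+1}$ is defined automatically as the induced isomorphism on cohomology, and one must check that $\gamma_{r+1}$ again intertwines the next pair of differentials. The bidegree bookkeeping is consistent: since $\gamma_r$ sends $(p,q)$ to $(2p+q,-p)$, the Cartan--Eilenberg bidegree $(r,1-r)$ of $d_r(\Dec)$ is carried to the bidegree $(r+1,-r)$ on the Tot side, which is exactly the bidegree of $d_{r+1}(\sX)$.

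The essential tool throughout is the diagram \eqref{eqn:BigDiagr} combined with Lemma~\ref{lem:Iso}. Setting $m := 2p+q$, the mixed object $\Tot^{(m/m+1)}_{p/p+1}(\sX)$ serves as a bridge between the two spectral sequences: on $\pi_{-p-q}$, the map $\alpha$ is an isomorphism onto $\pi_{-p-q}\Tot^{(m/N)}f_{p/p+1}(\sX)$, which maps naturally to the Dec $E_1$-term, while $\beta$ is injective into $\pi_{-p-q}\Tot^{(m/m+1)}f_p(\sX)$. Composing $\beta$ with the forgetful map $f_p \to \id$ embeds the image of $\beta$ in $E_1^{m,-p}(\sX) = \pi_{-p-q}\Tot^{(m/m+1)}(\sX)$ as precisely those classes that lift through $\Tot^{(m/N)}f_p(\sX) \to \Tot^{(m/N)}(\sX)$, i.e.~as exactly the cycles surviving to $E_2^{m,-p}(\sX)$. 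This identifies $E_1^{p,q}(\Dec,\sX)$ with $E_2^{m,-p}(\sX)$ via the mixed object. The compatibility $\gamma_1 \circ d_1(\Dec) = d_2(\sX) \circ \gamma_1$ is then a diagram chase in \eqref{eqn:BigDiagr}: $d_1(\Dec)$ is the Postnikov boundary from $f_{p/p+1}\sX$ to a suspension of $f_{p+1/p+2}\sX$ applied termwise, while $d_2(\sX)$ is the two-step Tot boundary of the unique $E_2$-lift; both operations are carried out inside the shifted mixed object for bidegree $(p+1,q-1)$ and are identified there through $\alpha$ and $\beta$.

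The inductive step runs by iterating this mechanism. An $E_r$-cycle on the Dec side is represented by a class in $\pi_{-p-q}\Tot^{(0/B)}f_{p/p+1}(\sX)$ that lifts to $\pi_{-p-q}\Tot^{(0/B)}f_{p/p+r}(\sX)$, while an $E_{r+1}$-cycle on the Tot side is represented by a class in $\pi_{-p-q}\Tot^{(m/m+1)}(\sX)$ lifting to $\pi_{-p-q}\Tot^{(m-r/m+1)}(\sX)$. Iterated applications of \eqref{eqn:BigDiagr} and Lemma~\ref{lem:Iso}, with indices shifted appropriately along the lift, show that one type of lift exists if and only if the other does, and that the boundary operators computing $d_{r+1}(\Dec)$ and $d_{r+2}(\sX)$ are images of the same connecting morphism inside a shifted mixed object, so they agree under $\gamma_{r+1}$. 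The main technical obstacle is the careful bookkeeping of this two-variable (Postnikov $\times$ Tot) filtration: one must verify that each Postnikov step on the Dec side corresponds, under the reindexing $p \mapsto 2p+q$, to exactly two Tot steps on the original side, and that the connectivity hypothesis $-2p \le q \le -p$ of Lemma~\ref{lem:Iso} remains satisfied at every stage of the induction. This is the spectrum-level analog of Deligne's original décalage comparison \cite[proposition 1.3.4]{HodgeII}, with cosimplicial homotopy fiber sequences playing the role of the filtered complex machinery.
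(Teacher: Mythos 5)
Your outline of the $E_1/E_2$ comparison is essentially the paper's: the mixed object $\Tot^{(m/m+1)}_{p/p+1}(\sX)$ with the maps $\alpha$ and $\beta$ of lemma~\ref{lem:Iso} and the diagram \eqref{eqn:BigDiagr} are exactly the tools used there, and your identification of the image of $\beta$ with the cycles surviving to $E_2^{m,-p}(\sX)$ is correct. The gap is in the inductive step. You write that the isomorphisms $\gamma_{r+1}$ are ``defined automatically'' on cohomology and that compatibility with the next differentials follows from ``iterated applications'' of \eqref{eqn:BigDiagr} with shifted indices; but the compatibility of $\gamma_{r+1}$ with $d_{r+1}(\Dec)$ and $d_{r+2}(\sX)$ is not a formal consequence of $\gamma_r$ being an isomorphism of complexes --- the higher differentials depend on the underlying towers, not just on the previous page --- and the representative-chasing you propose (matching lifts through $f_{p/p+r}$ on one side against lifts through $\Tot^{(m-r/m+1)}$ on the other, for every $r$) is precisely the hard content. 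As stated it is an assertion, not an argument, and carrying it out by hand would require tracking the indeterminacy of representatives at every stage.

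The paper sidesteps this entirely by promoting the $E_1/E_2$ comparison to a morphism of exact couples. It constructs a map $\delta_1^{p,q}:D_{1,\Dec}^{p,q}=\pi_{-p-q}\Tot^{(0)}f_p\sX\to D_2^{2p+q,-p}$ into the \emph{once-derived} couple of the Tot tower, using lemma~\ref{lem:conn} to see that $\pi_{-p-q}\Tot^{(2p+q)}f_p\sX\to\pi_{-p-q}\Tot^{(0)}f_p\sX$ is surjective and $\pi_{-p-q}\Tot^{(2p+q-1)}f_p\sX\to\pi_{-p-q}\Tot^{(0)}f_p\sX$ is an isomorphism, so that a class in $D_{1,\Dec}^{p,q}$ has a well-defined image in $\im[\pi_{-p-q}\Tot^{(2p+q)}\sX\to\pi_{-p-q}\Tot^{(2p+q-1)}\sX]=D_2^{2p+q,-p}$. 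One then verifies only the three identities $i_2\circ\delta_1=\delta_1\circ i_{1,\Dec}$, $\pi_2\circ\delta_1=\gamma_1\circ\pi_{1,\Dec}$ and $\del_2\circ\gamma_1=\delta_1\circ\del_{1,\Dec}$ (the latter two via \eqref{eqn:BigDiagr} and lemma~\ref{lem:Iso}); after that, the existence and compatibility of all the $\gamma_r$ for $r\ge 2$, and the fact that they are isomorphisms, follow formally from the functoriality of derived couples. To repair your proof you should either adopt this exact-couple formulation or supply the missing induction in full, including the connectivity estimates needed to see that the relevant lifts exist and that the connecting maps agree modulo the correct indeterminacy at each stage.
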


\begin{proof} To simplify the notation, we give the proof in case $B=\infty$; the proof  in the general case is exactly the same, replacing $\Tot^{(-)}$ with $\Tot^{(-/B)}$ throughout. 

The spectral sequence \eqref{eqn:TowerSS2} is the spectral sequence associated to the exact couple
\[
\xymatrix{
D_1\ar[rr]^{i_1}&&D_1\ar[dl]^{\pi_1}\\
&E_1\ar[ul]^{\del_1}}
\]
with 
\[
D_1^{p,q}:=\pi_{-p-q}\Tot^{(p)}(\sX),\ E_1^{p,q}:=\pi_{-p-q}\Tot^{(p/p+1)}(\sX),
\]
the maps $i^{p,q}_1:D_1^{p+1,q-1}\to D_1^{p,q}$ and $\pi_1^{p,q}:D_1^{p,q}\to E_1^{p,q}$  induced by the canonical morphisms
\begin{align*}
&\Tot^{(p+1)}(\sX)\to \Tot^{(p)}(\sX),\\
&\Tot^{(p)}(\sX)\to \Tot^{(p/p+1)}(\sX),
\end{align*}
respectively,  and with  $\del_1^{p,q}:E_1^{p,q}\to D_1^{p+1,q}$ the boundary map associated to the homotopy fiber sequence
\[
\Tot^{(p+1)}(\sX)\to\Tot^{(p)}(\sX)\to \Tot^{(p/p+1)}(\sX).
\]
Similarly, the spectral sequence \eqref{eqn:DecTowerSS} arises from the exact couple
\[
\xymatrix{
D_{1,\Dec}\ar[rr]^i&&D_{1,\Dec}\ar[dl]^\pi\\
&E_{1,\Dec}\ar[ul]^\del}
\]
defined in a similar way, where we replace $\Tot^{(p)}(\sX)$, $\Tot^{(p+1)}(\sX)$ and $\Tot^{(p/p+1)}(\sX)$ with
$\Tot^{(0)} f_p(\sX)$, $\Tot^{(0)} f_{p+1}(\sX)$ and $\Tot^{(0)} f_{p/p+1}(\sX)$. To prove the result, it suffices to define maps
\[
\delta^{p,q}_1: D_{1,\Dec}^{p,q}\to D_2^{2p+q,-p}
\]
such that 
\[
\lower20pt\hbox{$\begin{pmatrix}\delta_1&&\delta_1\\&\gamma_1\end{pmatrix}:$}
\xymatrix{
D_{1,\Dec}\ar[rr]^{i_1}&&D_{1,\Dec}\ar[dl]^{\pi_1}\\
&E_{1,\Dec}\ar[ul]^{\del_1}}
\lower20pt\hbox{$\to$} 
\xymatrix{
D_2\ar[rr]^{i_2}&&D_2\ar[dl]^{\pi_2}\\
&E_2\ar[ul]^{\del_2}}
\]
defines a map of (reindexed) exact couples.

We recall that $E_2$ is the cohomology of the complex $(E_1, d_1)$, with $d_1=\pi_1\circ \del_1$. Let $Z_2\subset E_1$ be the kernel  of $d_1$ and note that $Z_2\supset\pi_1(D_1)$. By definition,   $D^{p,q}_2=i_1(D^{p,q}_1)\subset D_1^{p-1,q+1}$, $i_2:D_2\to D_2$ is the map induced by $i_1$,  the map $\pi_2:D_2\to E_2$ is  defined by the commutative diagram
\[
\xymatrix{
D_2\ar@{^(->}[d]\ar[r]_{\pi_{1|D_2}}\ar@/^0.5cm/[rr]^{\pi_2}&Z_2\ar@{->>}[r]_\pi\ar@{^(->}[d]&E_2\\
D_1\ar[r]_{\pi_1}&E_1}
\]
and $\del_2:E_2\to D_2$ is induced by restricting $\del_1$ to $Z_2$, noting that this restriction sends $Z_2$ to $i_1(D_1)\subset D_1$, and descends to $E_2$.

Next, we note that the maps
\begin{align*}
&\pi_{-p-q}\Tot^{(2p+q)}f_p\sX\to \pi_{-p-q}\Tot^{(0)}f_p\sX\\
&\pi_{-p-q}\Tot^{(2p+q)}f_{p/p+1}\sX\to \pi_{-p-q}\Tot^{(0)} f_{p/p+1}\sX\\
&\pi_{-p-q-1}\Tot^{(2p+q+2)}f_{p+1}\sX\to \pi_{-p-q-1}\Tot^{(0)} f_{p+1}\sX
\end{align*}
are surjective and
\begin{align*}
&\pi_{-p-q}\Tot^{(2p+q-1)}f_p\sX\to \pi_{-p-q}\Tot^{(0)}f_p\sX\\
&\pi_{-p-q}\Tot^{(2p+q-1)}f_{p/p+1}\sX\to \pi_{-p-q}\Tot^{(0)} f_{p/p+1}\sX\\
&\pi_{-p-q-1}\Tot^{(2p+q+\epsilon)}f_{p+1}\sX\to \pi_{-p-q-1}\Tot^{(0)} f_{p+1}\sX
\end{align*}
($\epsilon\le1$) are isomorphisms, by lemma~\ref{lem:conn}.  From the commutative diagram
\[
\xymatrix{
\pi_{-p-q}\Tot^{(2p+q)}f_p\sX\ar@{->>}[r]\ar[d]&
\pi_{-p-q}\Tot^{(2p+q-1)}f_p\sX\ar[r]^-\sim\ar[d]& \pi_{-p-q}\Tot^{(0)} f_p\sX\\
\pi_{-p-q}\Tot^{(2p+q)}\sX\ar[r]&\pi_{-p-q}\Tot^{(2p+q-1)}\sX}
\]
we arrive at the well-defined map
\begin{multline*}
D_{1,\Dec}^{p,q}=\pi_{-p-q}\Tot^{(0)} f_p\sX
\xrightarrow{\delta_1^{p,q}}\im[\pi_{-p-q}\Tot^{(2p+q)}\sX\to \pi_{-p-q}\Tot^{(2p+q-1)}\sX]\\= D_2^{2p+q,-p}.
\end{multline*}
The identity
\[
i_2\circ\delta_1=\delta_1\circ i_{1,\Dec}
\]
follows directly.

To show that $\pi_2\circ \delta_1=\gamma_1\circ \pi_{1,\Dec}$, we consider the diagram (which is well-defined by lemma~\ref{lem:Iso})
\begin{equation}\label{eqn:BigDiagr2}
\xymatrix{
\ar@/_2.5cm/[dddd]_{\delta_1^{p,q}}\pi_{-p-q}\Tot^{(0)} f_p\sX\ar[r]^{\pi_{1,\Dec}}&\pi_{-p-q}\Tot^{(0)} f_{p/p+1}\sX\ar@/^2cm/[ddr]^{\gamma_1^{p,q}}\\
\pi_{-p-q}\Tot^{(2p+q)}f_p\sX\ar[r]\ar[dd]\ar[dr]\ar@{->>}[u]&\pi_{-p-q}\Tot^{(2p+q)}f_{p/p+1}\sX\ar@{_(->}[d]^{\beta\circ \alpha^{-1}}\ar@{->>}[u]
\ar@/^0.5cm/[ddr]^{\tilde\gamma_1^{p,q}}\\\
&\pi_{-p-q}\Tot^{(2p+q/2p+q+1)}f_p\sX\ar[d]&E_2^{p,q}\\
\pi_{-p-q}\Tot^{(2p+q)}\sX\ar[r]_-{\pi_1}&\pi_{-p-q}\Tot^{(2p+q/2p+q+1)}\sX&Z_2^{2p+q,-p}\ar@{_(->}[l]\ar@{->>}[u]_\pi\\
D_2^{2p+q,-p}\ar@{_(->}[u]\ar@/_1cm/[rru]_{\pi_{1|D_2}}}
\end{equation}
The right-hand column may be described explicitly as follows: let 
\[
N\pi_p\sX^*:=[N\pi_p\sX^0\to N\pi_p\sX^1\to\ldots]
\]
be the  normalized complex associated to the cosimplicial abelian group object $n\mapsto \pi_p\sX^n$, shifted to be supported in degrees $[-p,\infty)$. Then the right-hand column is the sequence of evident maps
\[
\xymatrix{
H^{p+q}(N\pi_p\sX^*)&
Z^{p+q}(N\pi_p\sX^*)\ar@{->>}[l]\ar@{^(->}[r]&
N\pi_p\sX^{2p+q}\ar@{=}[r]&
N\pi_p\sX^{2p+q}}
\]
The map $\tilde\gamma_1^{p,q}$ is the evident identification of $Z^{p+q}(N\pi_p\sX^*)$ with $Z_2^{2p+q,-p}$. The commutativity of \eqref{eqn:BigDiagr2} follows from this computation and the commutativity of diagram \eqref{eqn:BigDiagr}. Since $\pi_2=\pi\circ \pi_{1|D_2}$, this shows that 
$\pi_2\circ \delta_1=\gamma_1\circ \pi_{1,\Dec}$.

For the remaining identity $\del_2\circ \gamma_1=\delta_1\circ\del_1$,  we extract from the diagram \eqref{eqn:BigDiagr} a commutative diagram (in $\sM$)  with rows being homotopy fiber sequences and $m=2p+q$
\[
\xymatrix{
\Tot^{({m+1})}f_p\sX\ar[r]&\Tot^{({m})}f_p\sX\ar[r]&\Tot^{({m}/{m+1})}f_p\sX\\
\Tot^{({m+1})}f_{p+1}\sX\ar[r]\ar[d]\ar[u]&\Tot^{({m})}f_p\sX\ar@{=}[d]\ar[r]\ar@{=}[u]&\Tot^{({m}/{m+1})}_{{p}/{p+1}}\sX\ar[d]_\alpha\ar[u]_\beta\\
\Tot^{({m})}f_{p+1}\sX\ar[r]&\Tot^{({m})}f_p\sX\ar[r]&\Tot^{({m})}f_{{p}/{p+1}}\sX.
}
\]
This gives us the commutative diagram
\[
\xymatrixcolsep{30pt}
\xymatrix{
\pi_{-p-q}\Tot^{({m}/{m+1})}\sX\ar[r]^\del&\pi_{-p-q-1}\Tot^{({m+1})}\sX\\
\pi_{-p-q}\Tot^{({m}/{m+1})}f_p\sX\ar[r]^{\del^{{m}/{m+1}}}\ar[u]^{\tilde\gamma}
&\pi_{-p-q-1}\Tot^{({m+1})}f_p\sX\ar[u]_-{\tilde\delta}\\
\pi_{-p-q}\Tot^{({m}/{m+1})}_{{p}/{p+1}}\sX\ar[d]^\sim_\alpha\ar@{_(->}[u]^\beta\ar[r]^-{\tilde\del}&\pi_{-p-q-1}\Tot^{({m+1})}f_{p+1}\sX\ar[d]_\sim^{\tilde\alpha}\ar[u]_j\\
\pi_{-p-q}\Tot^{({m})}f_{{p}/{p+1}}\sX\ar[r]_{\del_{p/p+1}}\ar[d]^\sim_\phi&\pi_{-p-q-1}\Tot^{({m})}f_{p+1}\sX\ar[d]_\sim^{\tilde\phi}\\
\pi_{-p-q}\Tot^{({0})} f_{{p}/{p+1}}\sX\ar[r]_-{\del_{1,\Dec}}&\pi_{-p-q-1}\Tot^{({0})} f_{p+1}\sX.
}
\]
The map $\del_2$ is induced from $\del$, the map $\delta_1$ is induced from $\tilde\delta\circ j\circ\tilde{\alpha}^{-1}\circ\tilde{\phi}^{-1}$ (noting that this latter map has image in $D_2^{2p+q+2,-p-1}$), and $\gamma_1=\tilde\gamma\circ\beta\circ\alpha^{-1}\circ\phi^{-1}$ (as we have noted above). This gives the identity $\del_2\circ \gamma_1=\delta_1\circ\del_{1,\Dec}$, completing the proof.
\end{proof}

\begin{rem} Proposition~\ref{prop:Dec} may be viewed as a homotopy theoretic analog of a special case of Deligne's result \cite[proposition 1.3.4]{HodgeII}. Indeed, let $K^{**}$ be a double complex and let $K^*$ be the associated (extended) total complex
\[
K^n:=\prod_{a+b=n}K^{a,b}.
\]
Give $K^n$ the filtration by taking the stupid filtration in the first variable, that is, $(F^mK)^n:=\prod_{a+b=n, a\ge m}K^{a,b}$. Then Deligne's filtration $\Dec^m K^*$ is given by  $\Dec^m K^n=\prod_{a+b=n}\Dec^mK^{a,b}$ with
\[
\Dec^mK^{a,b}=\begin{cases} K^{a,b}&\text{ for } b<-m\\0&\text{ for } b>-m\\\ker(\del_2:K^{a, -m}\to K^{a,-m+1})&\text{ for } b=-m.\end{cases}
\]
That is, $\Dec^m K^*$ is the extended total complex of the double complex 
\[
a\mapsto \tau^{can}_{\le -m}(K^{a,*},\del_2), 
\]
$\tau^{can}_{\le -m}C^*$ being the canonical subcomplex of a complex $C^*$.

If  $K^{a,b}=0$ for $a<0$, we may use the Dold-Kan correspondence to give a cosimplicial object in complexes
\[
n\mapsto \tilde K^{n, *}
\]
such that $K^{a, *}=N\tilde K^{a,*}$ as complexes, and the differential $\del_1:K^{a,b}\to K^{a+1, b}$ is the differential $N\tilde K^{a,*}\to N\tilde K^{a+1,*}$ given as the usual alternating sum of coface maps. If we let $EM \tilde{K}^{a,*}$ be the Eilenberg-MacLane spectrum associated to the complex $\tilde{K}^{a,*}$, then $\Tot[n\mapsto EM\tilde{K}^{n,*}]$ is the Eilenberg-MacLane spectrum associated to $\Tot\,K^{*}$, the tower $\Tot^{(*)}[n\mapsto EM\tilde{K}^{n,*}]$ is the tower associated to the filtration $F^*K^*$, and the tower $\Tot\,[n\mapsto f_*EM\tilde{K}^{n,*}]$ is associated to $\Dec^*K$.  Furthermore, the spectral sequences \eqref{eqn:TowerSS2}  and \eqref{eqn:DecTowerSS} are the same as the ones associated to the filtered complex $F^*K$ and $\Dec^*K$, respectively,  and the isomorphism of Proposition~\ref{prop:Dec} is the same as that of \cite[proposition 1.3.4]{HodgeII}. The proof given here is considerably more involved than that in \cite{HodgeII}, due to the fact that one could not simply compute with elements, as is possible in the setting of filtered complexes.
\end{rem}

\end{document}